\documentclass[a4paper, 11pt, final]{article}

\usepackage{amsfonts,enumerate,amsmath}
\usepackage{graphicx,psfrag}

\delimitershortfall=-0.1pt
\allowdisplaybreaks[4]

\newtheorem{proposition}{Proposition}[section]
\newtheorem{theorem}[proposition]{Theorem}
\newtheorem{lemma}[proposition]{Lemma}
\newtheorem{corollary}[proposition]{Corollary}
\newtheorem{definition}[proposition]{Definition}
\newtheorem{remark}[proposition]{Remark}

\newenvironment{proof}{\smallskip\noindent\emph{\textbf{Proof.}}\hspace{1pt}}%
{\hspace{-5pt}{\nobreak\quad\nobreak\hfill\nobreak$\square$\vspace{8pt}%
    \par}\smallskip\goodbreak}

\newenvironment{proofof}[1]{\smallskip\noindent{\textbf{Proof of #1.}}%
\hspace{1pt}}{\hspace{-5pt}{\nobreak\quad\nobreak\hfill\nobreak%
$\square$\vspace{8pt}\par}\smallskip\goodbreak}

\newcommand{\Section}[1]{\section{#1}\setcounter{equation}{0}}

\newcommand{\Id}{\mathinner{\mathrm{Id}}}
\newcommand{\comp}{\mathop\bigcirc}
\newcommand{\piu}[1]{{{[\![\,{{#1}}\,]\!]_{\strut +}^{\vphantom{\strut +}}}}}
\newcommand{\meno}[1]{{{[\![\,{{#1}}\,]\!]_{\strut -}^{\vphantom{\strut +}}}}}

\newcommand{\Lloc}[1]{\mathbf{L^{#1}_{loc}}}
\newcommand{\C}[1]{\mathbf{C^{#1}}}
\newcommand{\PC}{\mathbf{PC}}

\newcommand{\modulo}[1]{{\left|#1\right|}}
\newcommand{\norma}[1]{{\left\|#1\right\|}}
\newcommand{\reali}{{\mathbb{R}}}
\newcommand{\interi}{{\mathbb{Z}}}
\newcommand{\naturali}{{\mathbb{N}}}
\newcommand{\tv}{\mathrm{TV}}
\newcommand{\BV}{\mathbf{BV}}
\newcommand{\Lip}{\C{0,1}}

\renewcommand{\epsilon}{\varepsilon}
\renewcommand{\phi}{\varphi}
\renewcommand{\theta}{\vartheta}
\renewcommand{\L}[1]{\mathbf{L^#1}}


\title{On General Balance Laws with Boundary}

\author{Rinaldo M.~Colombo \\ \small Dipartimento di Matematica \\
  \small Universit\`a degli Studi di Brescia \\ \small Via Branze, 38
  \\ \small 25123 Brescia, Italy \\ \texttt{Rinaldo.Colombo@UniBs.it} \\
  \and Graziano Guerra \\ \small Dip.~di Matematica e Applicazioni \\
  \small Universit\`a di Milano -- Bicocca \\ \small Via Bicocca degli
  Arcimboldi, 8 \\ \small 20126 Milano, Italy \\
  \texttt{Graziano.Guerra@UniMiB.it}}

\begin{document}

\maketitle

\begin{abstract}

  \noindent This paper is devoted to general balance laws (with a
  possibly non local source term) with a non-characteristic
  boundary. Basic well posedness results are obtained, trying to
  provide sharp estimates. In particular, bounds tend to blow up as
  the boundary tends to be characteristic. New uniqueness results for
  the solutions to conservation and/or balance laws with boundary are
  also provided.

  \medskip

  \noindent\textit{2000~Mathematics Subject Classification:} 35L50, 35L65.

  \medskip

  \noindent\textit{Key words and phrases:} Balance Laws; Conservation
  Laws on Networks.

\end{abstract}

\Section{Introduction}
\label{sec:Intro}

This paper is concerned with initial boundary value problems (IBVP)
for systems of balance laws of the form
\begin{equation}
  \label{eq:BL}
  \left\{
    \begin{array}{l@{\qquad}rcl}
      \partial_t u +\partial_x f(u) = G(u)
      & x & > & \gamma(t)
      \\
      b\left(u \left(t,\gamma(t)\right) \right) = g(t)
      & t & \geq & 0
      \\
      u(0,x) = u_o(x)
      & x & \geq &\gamma(0)
    \end{array}
  \right.
\end{equation}
where $f$ is smooth, $Df$ is strictly hyperbolic, $u_o$ is the initial
datum and $G$ is a possibly non-local source term. The boundary
$\gamma$ is assumed non characteristic, i.e.~$\ell$ characteristics
point outwards and $n-\ell$ inwards. The role of $b$ is that of
letting $n-\ell$ component of $u$ be assigned by the boundary data
$g$. Above and in what follows, we assume that all $\BV$ functions are
right continuous.

Systems belonging to this class were already considered in the
literature. See, for instance, \cite{ColomboGuerra, ColomboGuerra3}
for the case with a non local source but no boundary
and~\cite{ColomboRosini4} for the case of a Temple type $f$.

Examples of physical models that fit into this class are found,
besides in the cited references, also
in~\cite{ColomboGuerraHertySachers}. There, a model describing the
flow of a fluid in a simple pipeline is based on a system essentially
of the form~(\ref{eq:BL}).

As is well known, preliminary to the study of~(\ref{eq:BL}), is that
of the purely convective system
\begin{equation}
  \label{eq:CPHCL}
  \left\{
    \begin{array}{l@{\qquad}rcl}
      \partial_t u +\partial_x f(u) = 0
      & x & > & \gamma(t)
      \\
      b\left(u \left(t,\gamma(t)\right) \right) = g(t)
      & t & \geq & 0
      \\
      u(0,x) = u_o(x)
      & x & \geq & \gamma(0)
    \end{array}
  \right.
\end{equation}
considered, for instance, in~\cite{Amadori1, AmadoriColombo1,
  ColomboRosini4, DonadelloMarson, DuboisLefloch}. Below, we provide
results on~(\ref{eq:CPHCL}) that are not contained in these papers. In
particular, the present estimates explicitly blow up as the boundary
tends to be characteristic. The choice of the Glimm type functionals
on which most of the proof relies is here simplified, compare for
instance~(\ref{eq:DefFun}) below
with~\cite[(2.10)-(2.13)]{DonadelloMarson}
and~(\ref{eq:Phi})--(\ref{eq:Simple})
with~\cite[(3.5)-(3.10)]{DonadelloMarson}.

In the homogeneous case~(\ref{eq:CPHCL}), we also provide a uniqueness
result that has no analogue in the case of Cauchy problems with no
boundary. Indeed, let $u$ solve~(\ref{eq:CPHCL}) and assume a second
boundary $\bar\gamma$ is given, such that $\bar\gamma \geq
\gamma$. Along $\bar\gamma$ assign the trace of $u$ as boundary data,
i.e.~let $\tilde g(t) = b\left( u \left(t, \gamma(t)\right)
\right)$. Then, the solution to
\begin{equation}
  \label{eq:small}
  \left\{
    \begin{array}{l@{\qquad}rcl}
      \partial_t \tilde u +\partial_x f(\tilde u) = 0
      & x & > & \tilde\gamma(t)
      \\
      b\left(\tilde u \left(t,\tilde\gamma(t)\right) \right) = \tilde g(t)
      & t & \geq & 0
      \\
      \tilde u(0,x) = u_o(x)
      & x & \geq & \tilde\gamma(0)
    \end{array}
  \right.
\end{equation}
coincides with the restriction of $u$ to $x \geq \bar\gamma(t)$, see
Proposition~\ref{prop:uniqueness}. We show that an analogous result
may not hold in the case of~(\ref{eq:BL}), see~(\ref{eq:NonUnicita}).

Besides, we also provide a Lipschitz estimate on the process generated
by~(\ref{eq:CPHCL}) that contains also a second order part on a
generic perturbation, see~\emph{\ref{it:Lipschitz}}) in
Theorem~\ref{thm:SRS}. This technical estimate, already known in less
general situations, played a key role in several other results, see
for instance~\cite[Proposition~3.10]{ColomboGuerra}
and~\cite[Remark~4.1]{AmadoriGuerra2002}.

All what we obtain in the case of~(\ref{eq:CPHCL}) is used in the
proof of the results on~(\ref{eq:BL}). In particular, for both
systems, we provide bounds on the total variation of time like
curves. These estimates are optimal in the sense that they blow up as
the boundary tends to be characteristic, see
propositions~\ref{prop:curves} and~\ref{prop:curvesSource}.

The next section is devoted to the homogeneous
problem~(\ref{eq:CPHCL}), while Section~\ref{sec:Source} presents the
results related to~(\ref{eq:BL}). The proofs are deferred to the last
two sections.

\Section{The Purely Convective IBVP}
\label{sec:Convective}

On system~(\ref{eq:CPHCL}) we require the following conditions:

\begin{description}
\item[($\boldsymbol{f}$)] $f \colon \Omega \to \reali^n$ is smooth,
  with $\Omega \subseteq \reali^n$ being open, such that $Df(u)$ is
  strictly hyperbolic for all $u \in \Omega$, each characteristic
  field is either genuinely nonlinear or linearly degenerate.
\end{description}
\noindent Without loss of generality, we may assume that $0 \in
\Omega$ and for all $u$ in $\Omega$, $Df(u)$ admits $n$ real distinct
eigenvalues $\lambda_1(u), \ldots, \lambda_n(u)$, ordered so that
$\lambda_{i-1}(u) < \lambda_i(u)$ for all $u$, with right eigenvectors
$r_1(u), \ldots, r_n(u)$.

\begin{description}
\item[($\boldsymbol{\gamma}$)] $\gamma \in \Lip(\reali^+;\reali)$ and,
  for a fixed positive $c$, $\lambda_{\ell}(u) + c \leq \dot \gamma(t)
  \leq \lambda_{\ell+1}(u) - c$ for a fixed $\ell \in \{1, \ldots,
  n-1\}$ and for all $u \in \Omega$.
\item[($\boldsymbol{b}$)] $b \in \C1(\Omega; \reali^{n-\ell})$ is such
  that $b(0)=0$ and
  \begin{displaymath}
    \det \left[
      Db(0) \, r_{\ell+1}(0) \quad 
      Db(0) \, r_{\ell+2}(0) \quad \cdots \quad
      Db(0) \, r_{n}(0)
    \right] \neq 0 \,.
  \end{displaymath}
\end{description}

\noindent For notational simplicity, we say below that a curve
$\gamma$ is \emph{$\ell$--non-characteristic} if $\gamma \in
\C{0,1}(\reali^+;\reali)$, and for a fixed positive $c$, for all $u
\in \Omega$, $\lambda_{\ell}(u) + c \leq \dot \gamma(t) \leq
\lambda_{\ell+1}(u) - c$. This notion is more restrictive than that of
a \emph{non-resonant} curve, see~\cite[Chapter~14]{DafermosBook}.

We define below the domain
\begin{displaymath}
  \mathbb{D}_\gamma 
  = 
  \left\{(t,x) \in \reali^+ \times \reali \colon x \geq
    \gamma(t) \right\}
\end{displaymath}
and extend to $\left[0, +\infty \right[ \times \reali$ any function
defined on $\mathbb{D}_\gamma$ to vanish outside $\mathbb{D}_\gamma$.

We slightly modify the definition given in~\cite{Goodman} of solution
to~(\ref{eq:CPHCL}) in the non characteristic case, see
also~\cite{Amadori1, AmadoriColombo1, DonadelloMarson}
and~\cite[Definition~2.1]{ColomboRosini4}. Indeed, here we require the
boundary condition to be satisfied by the solution only \emph{almost
  everywhere}. This softening allows for a simpler proof without any
substantial change, since we provide below a full characterization of
this solution, see~\ref{it:semigroup}), \ref{it:Lipschitz}) with
$\omega = 0$ and~\ref{it:tangent}) in Theorem~\ref{thm:SRS}.

\begin{definition}
  \label{def:SolConv}
  Let $T>0$. A map $u = u(t,x)$ is a solution to~(\ref{eq:CPHCL}) if
  \begin{enumerate}
  \item $ u \in \C0 \left([0,T]; \L1( \reali;\reali^n) \right)$ with
    $u(t,x) \in \Omega$ for a.e.~$(t,x) \in \mathbb{D}_\gamma$ and
    $u(t,x) = 0$ otherwise;
  \item $u(0,x) = u_o (x)$ for a.e.~$x \geq \gamma(0)$ and
    $\displaystyle \lim_{x \to 0+} b \left( u(t,x) \right) = g(t)$ for
    a.e.~$t \geq 0$;
  \item for $x > \gamma(t)$, $u$ is a weak entropy solution to
    $\partial_t u +\partial_x f(u) = 0$.
  \end{enumerate}
\end{definition}

\begin{theorem}
  \label{thm:SRS}
  Let the system~(\ref{eq:CPHCL}) satisfy~($\boldsymbol{f}$),
  ($\boldsymbol{b}$), ($\boldsymbol{\gamma}$). Assume also that $g \in
  \BV(\reali^+;\reali^{n-\ell})$ has sufficiently small total
  variation. Then, there exists a family of closed domains
  \begin{displaymath}
    \mathcal{D}_t 
    \subseteq
    \left\{
      u \in  (\L1 \cap \BV) \left( \reali; \Omega \right)
      \colon
      u(x) = 0 \mbox{ for all } x \leq \gamma(t)
    \right\}
  \end{displaymath}
  defined for all $t\geq 0$ and containing all $\L1$ functions with
  sufficiently small total variation that vanish to the left of
  $\gamma(t)$, a constant $L > 0$ and a process
  \begin{displaymath}
    P (t,t_o) \colon \mathcal{D}_{t_o} \to \mathcal{D}_{t_o+t} \,,
    \qquad \hbox{ for all } t_o,t \geq 0,
  \end{displaymath}
  such that
  \begin{enumerate}[1)]
  \item \label{it:semigroup} for all $t_o\geq 0$ and
    $u\in\mathcal{D}_{t_o}$, $P\left(0,t_o\right)u = u$ while for all
    $t,s,t_o\geq 0$ and $u\in\mathcal{D}_{t_o}$,
    $P\left(t+s,t_o\right)u = P\left(t,t_o+s\right)\circ P(s,t_o)u$;
  \item \label{it:Lipschitz} let $\omega$ be an $\L1$ function with
    small total variation, if $(\bar P, \bar{\mathcal{D}}_t)$ are the
    process and the domain corresponding to the boundary
    $\bar\gamma(t)$ and boundary data $\bar g(t)$, then, for any $u
    \in \mathcal{D}_{t_o}$, $v \in \bar{\mathcal{D}}_{t_o'}$, we have
    the following Lipschitz estimate with a second order error term
    accounting for $\omega$:
    \begin{eqnarray*}
      & &
      \norma{P(t,t_o)u - \bar P(t',t_o')v -
        \omega}_{\L1}
      \\
      & \leq &
      L \cdot \bigg\{\norma{u - v - \omega}_{\L1} 
      + 
      \modulo{t-t'}
      +
      \modulo{t_o-t_o'}
      \\
      & &\qquad
      +
      \int_{t_o}^{t_o+t} 
      \norma{g(\tau) - \bar g(\tau)}
      d\tau
      +
      \sup_{\tau\in[t_o,t]} \modulo{\gamma(\tau)-\bar\gamma(\tau)}
      \\
      & &\qquad 
      +
      t\cdot\tv \left(\omega\right)\bigg\};
    \end{eqnarray*}
  \item \label{it:tangent} the tangent vector to $P$ in the sense
    of~\cite[Section~5]{BressanCauchy} is the map $F$ defined
    at~(\ref{eq:local}), i.e.~for all $t_o\geq 0$ and $u \in
    \mathcal{D}_{t_o}$
    \begin{displaymath}
      \lim_{t\to 0+} \frac{1}{t} \norma{F(t,t_o)u - P(t,t_o)u}_{\L1} =0 \,;
    \end{displaymath}

  \item \label{it:solution} for all $u_o \in {\mathcal{D}}_0$, the map
    $u(t,x) = \left( P(t,0) u_o \right) (x)$ defined for $t \in [0,T]$
    and $(t,x) \in \mathbb{D}_\gamma$, solves~(\ref{eq:CPHCL}) in the
    sense of Definition~\ref{def:SolConv}.
  \end{enumerate}
  \noindent $P$ is uniquely characterized by~\ref{it:semigroup}),
  \ref{it:Lipschitz}) with $\omega = 0$ and~\ref{it:tangent}).
\end{theorem}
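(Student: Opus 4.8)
The plan is to realize $P$ as the unique limit of iterated \emph{local flows}, following the abstract construction of~\cite[Section~5]{BressanCauchy}. For $u\in\mathcal{D}_{t_o}$ and $t$ small, I would define $F(t,t_o)u$ by letting it coincide, far from the boundary, with Bressan's Standard Riemann Semigroup for $\partial_t u+\partial_x f(u)=0$ applied to $u$, and, near $x=\gamma(t_o)$, with the solution of the single boundary Riemann problem whose left state is $u\bigl(\gamma(t_o)+\bigr)$ and whose components $\ell+1,\ldots,n$ are prescribed through $b=g(t_o)$. Assumption~($\boldsymbol{b}$) makes this boundary Riemann problem uniquely solvable for small data by the implicit function theorem, and assumption~($\boldsymbol{\gamma}$) guarantees that the $1,\ldots,\ell$ waves carried by $u$ leave $\mathbb{D}_\gamma$ while the $\ell+1,\ldots,n$ waves it generates at the boundary enter $\mathbb{D}_\gamma$; this is the map $F$ of~(\ref{eq:local}).

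The core of the argument is a uniform a~priori bound. Building wave-front tracking approximations and iterating $F$, one controls the total variation through a Glimm-type functional $\Upsilon(u)=\tv(u)+C_1\,Q(u)+C_2\,B(u)$, with $Q$ the usual interaction potential and $B$ weighing the incoming waves about to hit $\gamma$; this is the simplified functional of~(\ref{eq:DefFun}). Three estimates are needed: the standard Glimm interaction inequality at interior collisions; a reflection estimate showing that an incoming $i$-wave with $i\le\ell$ produces reflected $\ell+1,\ldots,n$ waves of total strength $\O$ times its own, with an amplification factor that remains bounded as long as the constant $c$ in~($\boldsymbol{\gamma}$) stays away from $0$ and blows up like $1/c$ as the boundary becomes characteristic; and the estimate that each jump of $g$ injects boundary waves of total strength $\O$ times the size of the jump. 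Choosing $C_1,C_2$ large enough makes $\Upsilon$ non-increasing across interior interactions and boundary hits, so that $\tv\bigl(u(t)\bigr)\le\O\cdot\bigl(\tv(u_o)+\tv(g)\bigr)$ uniformly in the front-tracking parameter. This fixes the closed domains $\mathcal{D}_t$ and, in the limit, yields a Lipschitz local flow.

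For the $\L1$ stability I would compare two front-tracking solutions by a Bressan--Liu--Yang weighted functional $\Phi$, augmented by a boundary term accounting for $\sup\modulo{\gamma-\bar\gamma}$ and for $\int\norma{g-\bar g}$; this is the functional of~(\ref{eq:Phi})--(\ref{eq:Simple}). The one new feature is the second-order term in $\omega$ in~\ref{it:Lipschitz}): writing the $\L1$ distance between $P(t,t_o)u$, $\bar P(t',t_o')v$ and the translate by $\omega$, one treats $\omega$ as a $\BV$ perturbation transported by the flow, whose contribution to the growth of $\Phi$ is quadratic, hence bounded by $\O\cdot t\cdot\tv(\omega)$, exactly as in the boundary-free estimate~\cite[Proposition~3.10]{ColomboGuerra}. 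Together with the Lipschitz dependence on $t,t_o,g,\gamma$ and a Gronwall-type argument, this gives~\ref{it:Lipschitz}).

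With $F$ and these estimates at hand, the abstract theorem of~\cite[Section~5]{BressanCauchy} produces the process $P$ as the limit of iterated local flows; it satisfies~\ref{it:semigroup}), has $F$ as its tangent vector~\ref{it:tangent}), and is the \emph{unique} process enjoying~\ref{it:semigroup}), \ref{it:Lipschitz}) with $\omega=0$ and~\ref{it:tangent}), which is the final clause. For~\ref{it:solution}) one verifies directly that $u(t,x)=\bigl(P(t,0)u_o\bigr)(x)$ is a weak entropy solution for $x>\gamma(t)$, inherited from the Standard Riemann Semigroup, and that its trace along $\gamma$ satisfies the boundary condition of Definition~\ref{def:SolConv} for a.e.~$t$ --- the latter because, were it to fail on a set of positive measure, the local error in the tangency estimate~\ref{it:tangent}) could not be $o(t)$. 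The main obstacle is the reflection estimate at a nearly characteristic boundary: extracting the sharp, $c$-explicit amplification factor and checking that the potentials $Q$ and $B$ stay uniformly controlled for $c$ bounded away from $0$ demand careful bookkeeping of the wave sizes, and it is precisely there that the announced blow-up as the boundary becomes characteristic originates.
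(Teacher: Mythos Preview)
Your ingredients are right --- wave-front tracking, a weighted Glimm functional, and a Bressan--Liu--Yang distance $\Phi$ --- but the architecture differs from the paper's in two places that matter. First, the paper does \emph{not} build $P$ via the abstract iterated-local-flow machinery of~\cite{BressanCauchy}; it constructs $P$ directly as the $\L1$ limit of $\epsilon$-approximate wave-front tracking solutions, showing these form a Cauchy sequence through the $\Phi$-estimate~(\ref{eq:tosta}) (Proposition~\ref{prop:FirstPart}). The tangency condition~\ref{it:tangent}) is then proved \emph{a posteriori}, and that proof relies in an essential way on the uniqueness result Proposition~\ref{prop:uniqueness} (restricting a process orbit to a smaller half-plane is again a process orbit) together with the trace regularity of Proposition~\ref{prop:curves}. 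You cannot get tangency for free from an abstract theorem without first verifying its hypotheses, and in this setting those hypotheses amount to the same WFT estimates plus the trace machinery.

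Second, two of your localizations are off. The boundary reflection estimate (Lemma~\ref{lem:barK}) carries a constant $C$ depending only on $f$ and $b$, \emph{not} on $c$; the $1/c$ enters only in the stability functional $\Phi$, through the weight $\bar K>C/c$ of~(\ref{eq:Csuc}), needed so that the negative boundary contribution $-c\bar K\sum_{i\le\ell}\modulo{q_i^{\gamma+}}$ dominates the positive one. More seriously, the dependence on $\gamma$ is \emph{not} obtained by augmenting $\Phi$ with a $\sup\modulo{\gamma-\bar\gamma}$ term: with two different boundaries the solutions live on different domains and a common $\Phi$ is awkward to even define. The paper first proves everything for a fixed boundary curve, and only then compares $P^{(\gamma,g)}$ with $P^{(\bar\gamma,\bar g)}$ by reducing both to the common curve $\Gamma_1=\max(\gamma,\bar\gamma)$ via Proposition~\ref{prop:uniqueness}, picking up the $\sup\modulo{\gamma-\bar\gamma}$ term from Proposition~\ref{prop:curves}. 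Likewise, your argument for the boundary condition in~\ref{it:solution}) by contradiction with tangency is heuristic; the paper proves it through the $\Lloc1$ convergence of traces along $\gamma$ (Lemma~\ref{lemma:lemma46}), which again rests on Proposition~\ref{prop:curves}. In short, Propositions~\ref{prop:curves} and~\ref{prop:uniqueness} are not side results but structural pieces of the proof of Theorem~\ref{thm:SRS}, and your outline bypasses the places where they are actually needed.
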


\noindent The
conditions~\emph{\ref{it:semigroup})}--\emph{\ref{it:tangent})}
constitute what is the natural generalization to the present case of
the definition of \emph{Standard Riemann Semigroup},
see~\cite[Definition~9.1]{BressanLectureNotes}.

Remark that, in general, the Lipschitz constant $L$ blows up as the
the boundary tends to become characteristic, i.e.~as $c \to 0$,
see~(\ref{eq:Csuc}) and the next proposition. Indeed, in the proof of
Theorem~\ref{thm:SRS}, we prove also the following result on the
regularity of the solutions to~(\ref{eq:CPHCL}) along non
characteristic curves.

\begin{proposition}
  \label{prop:curves}
  Fix a positive $T$. Let the system~(\ref{eq:CPHCL}) satisfy the
  assumptions of Theorem~\ref{thm:SRS} and call $u$ the solution
  to~(\ref{eq:CPHCL}) constructed therein. Let $\Gamma_0, \Gamma_1$ be
  $\tilde\ell$--non-characteristic curves, for $\tilde\ell \in \{1,
  \ldots, n-1\}$. Then, there exists a constant $\mathcal{K} > 0$
  independent from $T, u_o,g$ such that
  \begin{displaymath}
    \int_0^{T}\!
    \norma{u\left(t, \Gamma_0(t) \right) - u\left(t, \Gamma_1(t) \right)}
    dt
    \leq
    \frac{\mathcal{K}}{c}
    \left(\tv(u_o) + \tv(g) \right) \norma{\Gamma_1 - \Gamma_0}_{\C0([0,T])}.
  \end{displaymath}
\end{proposition}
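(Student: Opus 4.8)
The plan is to estimate the integral by comparing the traces of $u$ along $\Gamma_0$ and $\Gamma_1$ through a family of intermediate curves. Since both $\Gamma_0$ and $\Gamma_1$ are $\tilde\ell$--non-characteristic and the set of such curves (with the same constant $c$) is convex, I would introduce the interpolation $\Gamma_s = (1-s)\Gamma_0 + s\Gamma_1$ for $s \in [0,1]$, which is again $\tilde\ell$--non-characteristic, and write
\begin{displaymath}
  \norma{u(t,\Gamma_0(t)) - u(t,\Gamma_1(t))} \leq \int_0^1 \norma{\partial_s u(t,\Gamma_s(t))}\,ds
\end{displaymath}
in a suitable weak sense (this requires care since $u$ is only $\BV$, so this step must be understood via finite differences and the $\BV$ bound, not a genuine derivative). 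Hence it suffices to bound $\int_0^T \norma{\partial_s u(t,\Gamma_s(t))}\,dt$ uniformly in $s$, i.e.\ to bound the total variation of $u$ restricted to a single $\tilde\ell$--non-characteristic curve, weighted by the horizontal displacement — this reduces the proposition to a one-curve statement.

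For the one-curve bound, the key tool is the Glimm-type functional used in the proof of Theorem~\ref{thm:SRS}: along the approximate (wave-front tracking or Glimm) solutions, the functional $\Upsilon(t) = \tv(u(t,\cdot)) + (\text{boundary terms in } g)$ is nonincreasing up to a constant, giving a uniform $\BV$ bound of the form $\O \cdot (\tv(u_o) + \tv(g))$. The content of Proposition~\ref{prop:curves} is to convert this \emph{spatial} total variation bound into a bound on the variation measured along the slanted curve $\Gamma_s$. The mechanism: a wave of family $i$ with strength $\sigma$ crossing the curve $\Gamma_s$ contributes, to the difference $u(t,\Gamma_s(t)+h) - u(t,\Gamma_s(t))$ integrated in $t$, an amount comparable to $\modulo{\sigma} \cdot h / \modulo{\lambda_i - \dot\Gamma_s}$, because the time the curve spends "straddling" that wave is $h / \modulo{\lambda_i - \dot\Gamma_s}$. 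Since $\Gamma_s$ is $\tilde\ell$--non-characteristic, $\modulo{\lambda_i(u) - \dot\Gamma_s(t)} \geq c$ for every family $i$ and every state $u \in \Omega$ (the constant $c$ separates $\dot\Gamma_s$ from $\lambda_{\tilde\ell}$ and $\lambda_{\tilde\ell+1}$, and strict hyperbolicity plus the ordering of the $\lambda_j$ push the other families even further away). Summing over all waves and using the uniform $\BV$ bound yields
\begin{displaymath}
  \int_0^T \norma{u(t,\Gamma_s(t)+h) - u(t,\Gamma_s(t))}\,dt \leq \frac{\O}{c}\,(\tv(u_o)+\tv(g))\cdot h,
\end{displaymath}
and taking $h = \norma{\Gamma_1 - \Gamma_0}_{\C0}$ (which dominates $\modulo{\Gamma_{s'}(t) - \Gamma_s(t)}$ for all $s,s',t$) and integrating over $s \in [0,1]$ gives the claim, with $\mathcal{K}$ absorbing the $\O$. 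One must also account for waves generated at the boundary $\gamma$, but these are controlled by $\tv(g)$ through the boundary Riemann solver, which is already built into the functional from Theorem~\ref{thm:SRS}; and one must pass to the limit from the approximate solutions to $u$ itself, using the $\L1$ convergence provided by that theorem together with Helly/lower semicontinuity of total variation.

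The main obstacle I expect is the bookkeeping in the wave-crossing estimate: making precise, at the level of a front-tracking approximation, that the integral in $t$ of the jump across $\Gamma_s$ really is controlled by $\sum_{\text{waves}} \modulo{\sigma} \cdot h/c$ and not by something worse. The subtlety is that many waves may cross $\Gamma_s$ in overlapping time intervals, that waves interact (changing strengths) while near the curve, and that cancellation must be handled so the bound sees $\tv$ and not the cumulative number of waves. The clean way around this is to work directly with the total-variation interpretation: for fixed $t$, $\norma{u(t,\Gamma_s(t)+h) - u(t,\Gamma_s(t))}$ is bounded by the variation of $u(t,\cdot)$ on $[\Gamma_s(t), \Gamma_s(t)+h]$, then integrate this in $t$ and change the order of integration, turning the $t$-integral of a local spatial variation into a spatial integral of "residence times", each bounded by $h/c$, against the total variation measure whose mass is $\O\,(\tv(u_o)+\tv(g))$. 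Carrying this Fubini argument through rigorously for a $\BV$-in-space, $\L1$-continuous-in-time function — justifying that the relevant measure-theoretic manipulations are legitimate and uniform in $s$ — is the technical heart of the proof.
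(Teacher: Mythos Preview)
Your proposal is correct and follows essentially the same route as the paper: the interpolation $\Gamma_s=(1-s)\Gamma_0+s\Gamma_1$, the wave-crossing estimate giving a time shift bounded by $h/c$ for each discontinuity, the bound on $\tv\bigl(u(\cdot,\Gamma(\cdot))\bigr)$ by a constant times $\tv(u_o)+\tv(g)$ via the decreasing Glimm functional, and the passage from front-tracking approximations to the exact solution are exactly the ingredients the paper uses (the paper packages the first three into a separate lemma on $\epsilon$-approximate solutions and then takes the limit). One point where the paper is more explicit than you: the passage to the limit does not follow from plain $\L1$ convergence plus lower semicontinuity, since traces along curves need not converge; the paper handles this by the averaging trick $\lim_{\delta\to0}\frac1\delta\int_0^\delta\int_0^T\norma{u(t,\Gamma_0(t)+x)-u(t,\Gamma_1(t)+x)}\,dt\,dx$, which converts the trace comparison into an $\Lloc1$ quantity before sending $\epsilon\to0$.
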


A uniqueness property proved in Section~\ref{sec:TechConv} is the
following.

\medskip

\begin{minipage}{0.55\linewidth}
  \begin{proposition}
    \label{prop:uniqueness}
    Let the system~(\ref{eq:CPHCL}) satisfy the same assumptions of
    Theorem~\ref{thm:SRS} and call $u$ the solution
    to~(\ref{eq:CPHCL}) constructed therein. Let $\tilde\gamma \in
    \C{0,1} (\reali^+;\reali)$ be any $\ell$--non-characteristic curve
    satisfying $\tilde\gamma(t) \geq \gamma(t)$ for all $t \geq
    0$. Define $\tilde g(t) = b\left( u(t, \tilde \gamma (t) +
    \right)$. Then, (\ref{eq:small}) also satisfies the assumptions on
    Theorem~\ref{thm:SRS} and the solution $\tilde u$ constructed by
    this Theorem satisfies
    \begin{displaymath}
      \tilde
      u(t,x) = u(t,x)
    \end{displaymath}
    for all $x \geq \tilde\gamma(t)$ and $t\geq 0$.
  \end{proposition}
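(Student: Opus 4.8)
The plan is to exploit the uniqueness clause at the end of Theorem~\ref{thm:SRS}: the process $P$ is characterized by \ref{it:semigroup}), \ref{it:Lipschitz}) with $\omega=0$, and \ref{it:tangent}). So it suffices to verify that the restriction $v(t,x) = u(t,x)$ for $x\ge\tilde\gamma(t)$ is itself a solution of~(\ref{eq:small}) in the sense of Definition~\ref{def:SolConv}, and then invoke uniqueness to identify it with $\tilde u$. First I would check the hypotheses of Theorem~\ref{thm:SRS} for~(\ref{eq:small}): the curve $\tilde\gamma$ is $\ell$--non-characteristic by assumption, $f$ and $b$ are unchanged so~($\boldsymbol{f}$) and~($\boldsymbol{b}$) hold, and the new boundary datum $\tilde g(t) = b\bigl(u(t,\tilde\gamma(t)+)\bigr)$ must be shown to lie in $\BV(\reali^+;\reali^{n-\ell})$ with small total variation. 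This is where Proposition~\ref{prop:curves} enters — but actually we need a pointwise-in-time trace bound rather than an integral one, so the relevant input is the bound on the total variation of $u$ along the non-characteristic curve $\tilde\gamma$ (which is part of what is proved alongside Theorem~\ref{thm:SRS}, cf.\ the remark preceding Proposition~\ref{prop:curves}); combined with $b\in\C1$ and $b(0)=0$ this gives $\tv(\tilde g) \le \O\,\bigl(\tv(u_o)+\tv(g)\bigr)$, hence small. Also $u_o$ restricted to $x\ge\tilde\gamma(0)$ has small total variation, so $u_o|_{[\tilde\gamma(0),\infty)} \in \mathcal{D}_0^{\,\tilde\gamma}$.

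Next I would verify the three conditions of Definition~\ref{def:SolConv} for $v$. Item~(3), that $v$ is a weak entropy solution of $\partial_t u + \partial_x f(u)=0$ for $x>\tilde\gamma(t)$, is immediate because $u$ is such a solution for $x>\gamma(t)\le\tilde\gamma(t)$ and $\{x>\tilde\gamma(t)\}\subseteq\{x>\gamma(t)\}$. Item~(1), continuity in $\L1$ and $u\in\Omega$ a.e., is inherited from $u$ (restriction and zero-extension across $\tilde\gamma$ preserve $\C0([0,T];\L1)$ since $\tilde\gamma$ is Lipschitz). For item~(2): $v(0,x)=u_o(x)$ for a.e.\ $x\ge\tilde\gamma(0)$ is clear; the boundary condition $\lim_{x\to\tilde\gamma(t)+} b\bigl(v(t,x)\bigr) = \tilde g(t)$ for a.e.\ $t$ holds \emph{by the very definition} of $\tilde g$, provided the trace $u(t,\tilde\gamma(t)+)$ exists for a.e.\ $t$ — which again follows from the finite total variation of $t\mapsto u(t,\tilde\gamma(t)+)$ along the non-characteristic curve.

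Having shown $v$ is \emph{a} solution to~(\ref{eq:small}), I would then argue it equals the one produced by Theorem~\ref{thm:SRS}. The cleanest route is: the map $t_o\mapsto \bigl(P(t,t_o)\bigr)$ built for $\gamma$ restricts, over the region $x\ge\tilde\gamma(t)$, to a process satisfying the semigroup property~\ref{it:semigroup}), the Lipschitz bound~\ref{it:Lipschitz}) with $\omega=0$ (with the same constant $L$, since restricting to a smaller domain only decreases $\L1$ distances and the boundary/data terms match those of~(\ref{eq:small})), and tangency~\ref{it:tangent}) to the local operator $F$ associated with~(\ref{eq:small}) — the latter because near any interior point the solutions of the two problems solve the same Riemann problems, and near the boundary $\tilde\gamma$ the restriction of $u$ solves exactly the boundary Riemann problem with datum $b=\tilde g$ by construction of the trace. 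By the uniqueness clause of Theorem~\ref{thm:SRS}, this restricted process coincides with $\tilde P$, whence $\tilde u(t,\cdot) = \tilde P(t,0)\,u_o|_{[\tilde\gamma(0),\infty)} = u(t,\cdot)$ on $x\ge\tilde\gamma(t)$.

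The main obstacle, I expect, is the trace analysis: making rigorous that $t\mapsto u(t,\tilde\gamma(t)+)$ is well-defined a.e.\ with bounded variation, and that the boundary condition for $v$ is genuinely met in the a.e.\ sense of Definition~\ref{def:SolConv}~(2). This requires the regularity of $u$ along arbitrary $\ell$--non-characteristic curves — which is exactly the content established in the proof of Theorem~\ref{thm:SRS} and Proposition~\ref{prop:curves} — together with a propagation-of-smallness argument to keep $\tv(\tilde g)$ below the threshold of Theorem~\ref{thm:SRS}. The verification that tangency transfers to the local operator $F$ of~(\ref{eq:small}) near the boundary is the other delicate point, since it is precisely there that the two problems could in principle differ; the resolution is that $\tilde g$ was \emph{defined} as the trace of $u$, so the boundary Riemann problem for~(\ref{eq:small}) has $u$ itself as its solution.
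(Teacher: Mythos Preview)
Your approach differs substantially from the paper's, and it contains a genuine gap in the identification step.

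The paper does \emph{not} go through the uniqueness clause of Theorem~\ref{thm:SRS}. Instead it works directly at the level of $\epsilon$-approximate wave front tracking solutions: take $u^\epsilon$ solving~(\ref{eq:CPHCL}) and $\tilde u^\epsilon$ solving~(\ref{eq:small}), and apply the stability estimate of Proposition~\ref{prop:theorem44} (with the equivalence~(\ref{eq:equiv})) on the domain $x>\tilde\gamma(t)$. That estimate bounds $\int_{\tilde\gamma(t)}^{+\infty}\|u^\epsilon-\tilde u^\epsilon\|\,dx$ by the initial $\L1$ difference plus $\int_0^t\|b(u^\epsilon(s,\tilde\gamma(s)))-b(\tilde u^\epsilon(s,\tilde\gamma(s)))\|\,ds$ plus $C\epsilon t$. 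Passing $\epsilon\to 0$ (using Lemma~\ref{lemma:lemma46} for the convergence of traces along $\tilde\gamma$), the initial term vanishes since the data agree on $[\tilde\gamma(0),+\infty[$, and the boundary term vanishes because $\tilde g$ was \emph{defined} as $b\!\left(u(\cdot,\tilde\gamma(\cdot)+)\right)$ while $\tilde u$ satisfies its boundary condition a.e. This yields $u=\tilde u$ on $x\ge\tilde\gamma(t)$ in a few lines.

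The gap in your argument is the sentence ``the map $t_o\mapsto (P(t,t_o))$ built for $\gamma$ restricts, over the region $x\ge\tilde\gamma(t)$, to a process satisfying \ref{it:semigroup}), \ref{it:Lipschitz}), \ref{it:tangent})''. A process for~(\ref{eq:small}) must act on data $w\in\tilde{\mathcal D}_{t_o}$, i.e.\ functions vanishing on $[\gamma(t_o),\tilde\gamma(t_o))$. Restricting $P$ to $x\ge\tilde\gamma$ does not define such a map: to evolve $w$ with $P$ you would need to prescribe values on $[\gamma(t_o),\tilde\gamma(t_o))$, and whatever extension you choose, the resulting trace along $\tilde\gamma$ will \emph{not} in general equal $\tilde g$ (which was manufactured from the one specific trajectory $u$, not from $w$). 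So the candidate object fails to be a process on $\tilde{\mathcal D}$, and the process-level uniqueness clause of Theorem~\ref{thm:SRS} cannot be invoked. What you actually have is a \emph{single Lipschitz trajectory} $t\mapsto u(t)\!\restriction_{[\tilde\gamma(t),\infty)}$; to identify it with $\tilde P(t,0)u_o$ you would need a trajectory-level error estimate of Gronwall type (as in~\cite[Theorem~2.9]{BressanLectureNotes}) together with the tangency of this curve to the local flow $\tilde F$ of~(\ref{eq:small}) for a.e.~$t$. That tangency is indeed plausible (since $\tilde g(t_o)=b(u(t_o,\tilde\gamma(t_o)+))$ makes the boundary Riemann problem trivial), but it is a different argument from the one you wrote, and it is precisely this verification that the paper circumvents entirely by staying at the $\epsilon$-approximate level and using Proposition~\ref{prop:theorem44}.
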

\end{minipage}
\begin{minipage}{0.44\linewidth}
  \begin{center}
    \begin{psfrags}
      \psfrag{t}{$t$} \psfrag{x}{$x$} \psfrag{gt}{$\tilde \gamma(t)$}
      \psfrag{g}{$\gamma(t)$} \psfrag{ut=u}{$
        \begin{array}{c}
          u(t,x) \\ = \\ \tilde u(t,x)
        \end{array}$}
      \psfrag{u}{$u(t,x)$}
      \includegraphics[width=5cm]{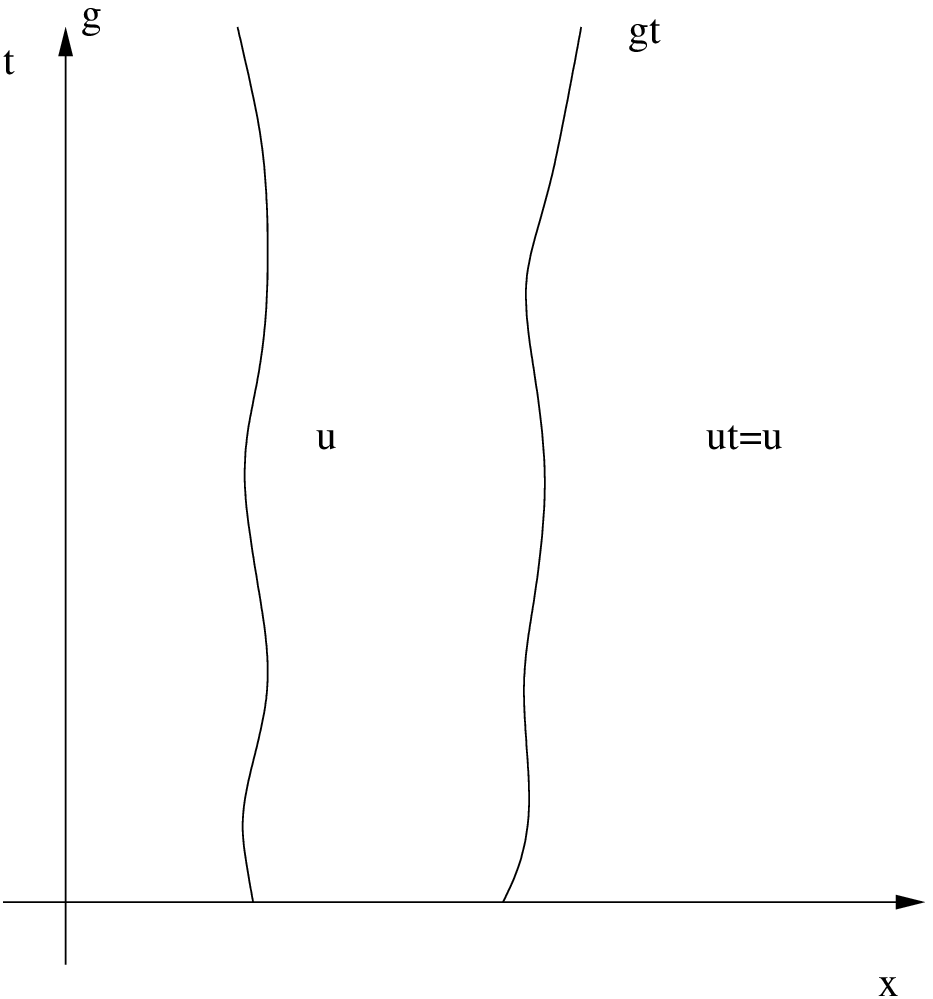}
    \end{psfrags}
  \end{center}

\end{minipage}

\Section{The IBVP with General Source Term}
\label{sec:Source}

To deal with the source term, for all positive $\delta$, define
\begin{displaymath}
  \mathcal{U}_\delta
  =
  \left\{
    u \in \L1 (\reali;\Omega) \colon \tv(u) \leq \delta
  \right\} \,.
\end{displaymath}
We add the following assumption on the source term of
system~(\ref{eq:BL}):
\begin{description}
\item[($\boldsymbol{G}$)] For a positive $\delta_o$, $G \colon
  \mathcal{U}_{\delta_o} \to \L1(\reali,\reali^n)$ is such that for
  suitable positive $L_1,L_2$
  \begin{displaymath}
    \begin{array}{l@{\qquad}rcl}
      \forall\, u,w \in \mathcal{U}_{\delta_o} &
      \norma{G(u) - G(w)}_{\L1} & \leq & L_1 \cdot \norma{u-w}_{\L1}
      \\[5pt]
      \forall\, u \in \mathcal{U}_{\delta_o} &
      \tv \left(G(u) \right) & \leq & L_2\,.
    \end{array}
  \end{displaymath}
\end{description}

\noindent The natural extension of Definition~\ref{def:SolConv} to the
present case is the following.

\begin{definition}
  \label{def:sol}
  Let $T>0$. A map $u = u(t,x)$ is a solution to~(\ref{eq:BL}) if
  \begin{enumerate}
  \item $ u \in \C0 \left([0,T]; \L1( \reali;\reali^n) \right)$ with
    $u(t,x) \in \Omega$ for a.e.~$(t,x) \in \mathbb{D}_\gamma$ and
    $u(t,x)=0$ otherwise;
  \item $u(0,x) = u_o (x)$ for a.e.~$x \geq \gamma(0)$ and
    $\displaystyle \lim_{x \to 0+} b \left( u(t,x) \right) = g(t)$ for
    a.e.~$t \geq 0$;
  \item for $x > \gamma(t)$, $u$ is a weak entropy solution to
    $\partial_t u +\partial_x f(u) = G(u)$.
  \end{enumerate}
\end{definition}

\noindent With this notation, we may now state the extension of
Theorem~\ref{thm:SRS} to the present non homogeneous case.

\begin{theorem}
  \label{thm:main}
  Let system~(\ref{eq:BL}) satisfy~($\boldsymbol{f}$),
  ($\boldsymbol{G}$), ($\boldsymbol{b}$),
  ($\boldsymbol{\gamma}$). Assume also that $g \in
  \BV(\reali^+;\reali^{n-\ell})$ has sufficiently small total
  variation. Then, there exist positive $\delta, L, T$, domains
  $\hat{\mathcal{D}}_t$, for $t \in [0,T]$ and maps
  \begin{displaymath}
    \hat P (t,t_o) \colon 
    \hat{\mathcal{D}}_{t_o} \to \hat{\mathcal{D}}_{t_o+t}
  \end{displaymath}
  for $t_o, t_o+t \in [0,T]$, such that
  \begin{enumerate}[i)]
  \item \label{it:TV} $\hat{\mathcal{D}}_t \supseteq \left\{ u \in
      \mathcal{U}_\delta \colon u(x) = 0 \mbox{ for } x < \gamma(t)
    \right\}$;
  \item \label{it:Process} for all $t_o,t_1,t_2$ with $t_o \in
    \left[0, T\right[$, $t_1 \in \left[0,T-t_o\right[$ and $t_2 \in
    [0, T-t_o-t_1]$, $\hat P(t_2,t_o+t_1) \circ \hat P(t_1,t_o) = \hat
    P(t_1+t_2,t_o)$ and $\hat P(0,t_o) = \Id$;
  \item \label{it:Hard} if $(\bar P, \bar{\mathcal{D}}_t)$ are the
    process and the domains corresponding to the boundary
    $\bar\gamma(t)$ and boundary data $\bar g(t)$, satisfying the same
    assumptions above, then, for $t_o, t_o' \in \left[0, T \right[$,
    $t \in [0, T-t_o]$ and $t' \in [0, T-t_o']$, for all $u \in
    \hat{\mathcal{D}}_{t_o}$, $\bar u \in \hat{\mathcal{D}}_{t_o'}$
    \begin{eqnarray*}
      & &
      \norma{\hat P(t,t_o)u - \bar P(t',t_o')\bar u}_{\L1}
      \\
      & \leq &
      L \cdot \bigg\{\norma{u - \bar u}_{\L1} 
      + 
      \left(1+\norma{u}_{\L1} \right) 
      \left(\modulo{t-t'} + \modulo{t_o-t_o'} \right)
      \\
      & &\qquad
      +
      \int_{t_o}^{t_o+t} 
      \norma{g(\tau) - \bar g(\tau)}
      d\tau
      +
      \sup_{\tau\in[t_o,t]} \modulo{\gamma(\tau)-\bar\gamma(\tau)}
      \bigg\};
    \end{eqnarray*}
  \item \label{it:Tangent} for all $t_o \in \left[0,T\right[$, $t \in
    [0,T-t_o]$, $u \in \hat{\mathcal{D}}_{t_o}$ define
    \begin{displaymath}
      \hat F (t,t_o) u 
      = 
      P(t,t_o)u 
      + 
      t \, G (u) \,
      \chi_{\strut \left[\gamma(t_o+t), +\infty\right[}
    \end{displaymath}
    then
    \begin{displaymath}
      \lim_{t \to 0+}
      \frac{1}{t}
      \norma{\hat P(t,t_o) u - \hat F(t,t_o)u}_{\L1} =0\,;
    \end{displaymath}
  \item \label{it:bdr} for all $u_o \in \hat{\mathcal{D}}_0$, the map
    $u(t,x) = \left( \hat P(t,0) u_o \right) (x)$ defined for $t \in
    [0,T]$ and $(t,x) \in \mathbb{D}_t$, solves~(\ref{eq:BL}) in the
    sense of Definition~\ref{def:sol}.
  \end{enumerate}
  \noindent The process $\hat P$ is uniquely characterized
  by~\ref{it:Process}), \ref{it:Hard}) and~\ref{it:Tangent}).
\end{theorem}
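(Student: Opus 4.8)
The plan is to build $\hat P$ from the convective process $P$ of Theorem~\ref{thm:SRS} by an operator-splitting (fractional step) argument, exactly as one does for balance laws on the whole line, but carrying along the boundary $\gamma$ at each step. First I would fix a time step $\Delta t > 0$ and, on each subinterval $[k\Delta t,(k+1)\Delta t]$, define the approximate solution by alternately applying the convective process $P(\cdot,k\Delta t)$ and the Euler step $u \mapsto u + \Delta t\,G(u)\,\chi_{[\gamma(\cdot),+\infty[}$; this is precisely the map $\hat F$ appearing in \ref{it:Tangent}). Assumption~($\boldsymbol{G}$) guarantees that each Euler step increases the total variation by at most $\O\,\Delta t\,L_2$ and is Lipschitz in $\L1$ with constant $1 + \O\,\Delta t\,L_1$, so, choosing $\delta$ and $T$ small enough (with $T$ bounded in terms of $\delta_o$, $L_2$ and the $\BV$ bound on $g$), the iterates stay in a set of functions with total variation $\le \delta < \delta_o$ that vanish to the left of $\gamma(t)$. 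This gives \ref{it:TV}), and passing to the limit $\Delta t \to 0$ along the Lipschitz-in-time, $\BV$-in-space bounds produces a limit process $\hat P$; the semigroup/process identity \ref{it:Process}) and $\hat P(0,t_o) = \Id$ are inherited in the limit from the corresponding properties of $P$ and the additivity of the splitting grid.

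The heart of the matter is the Lipschitz estimate \ref{it:Hard}), which I would obtain by a Gronwall argument comparing two splitting approximations, one for $(\gamma,g)$ and one for $(\bar\gamma,\bar g)$. On a convective step I would use \ref{it:Lipschitz}) of Theorem~\ref{thm:SRS} with $\omega = 0$, which contributes the terms $\modulo{t-t'} + \modulo{t_o-t_o'}$, $\int \norma{g-\bar g}$ and $\sup\modulo{\gamma-\bar\gamma}$; note that the factor $1 + \norma{u}_{\L1}$ in front of the time differences comes from the fact that along a convective step the solution may move by an amount proportional to its mass, and the Euler correction on a step of length $\modulo{t-t'}$ adds mass $\O\,\modulo{t-t'}\,\norma{G(u)}_{\L1} \le \O\,\modulo{t-t'}\,(1+\norma{u}_{\L1})$ by~($\boldsymbol{G}$) and $G(0)$ being controlled. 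On an Euler step I would estimate $\norma{(u + \Delta t\,G(u)\chi) - (\bar u + \Delta t\,G(\bar u)\bar\chi)}_{\L1} \le \norma{u-\bar u}_{\L1} + \Delta t\,L_1\norma{u-\bar u}_{\L1} + \Delta t\,L_2\sup\modulo{\gamma-\bar\gamma}$, the last term bounding the symmetric difference of the two characteristic functions. Summing over the $\O(T/\Delta t)$ steps and applying the discrete Gronwall lemma turns the accumulated $\Delta t\,L_1$ factors into a constant $e^{L_1 T}$, absorbed into $L$, and yields \ref{it:Hard}) after letting $\Delta t \to 0$.

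With \ref{it:Hard}) in hand, \ref{it:Tangent}) is almost immediate: for a single step of length $t$ the splitting approximation is exactly $\hat F(t,t_o)u$ by construction, so $\norma{\hat P(t,t_o)u - \hat F(t,t_o)u}_{\L1}$ is the error between the limit process and one Euler–convective step, which the Lipschitz estimate and the uniform $\BV$ bounds show is $\O(t^2)$; dividing by $t$ and sending $t \to 0+$ gives the tangency. For \ref{it:bdr}), I would argue exactly as for the convective case: the uniform convergence of the splitting scheme plus the consistency of $\hat F$ with the equation $\partial_t u + \partial_x f(u) = G(u)$ shows the limit is a weak entropy solution for $x > \gamma(t)$, the $\L1$-continuity in time is part of the construction, and the boundary condition $\lim_{x\to\gamma(t)+} b(u(t,x)) = g(t)$ for a.e.\ $t$ is inherited from the corresponding property of $P$, since the Euler steps do not alter the trace on $\gamma(t)$ (they only add a term supported on $x \ge \gamma(t)$, continuous in $x$). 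Finally, uniqueness of $\hat P$ given \ref{it:Process}), \ref{it:Hard}) and~\ref{it:Tangent}) follows by the standard error-estimate technique: given two processes satisfying these, compare the second process with one Euler–convective step of the first over each grid interval using \ref{it:Hard}) to bound the propagation and \ref{it:Tangent}) to bound the local error by $o(\Delta t)$ per step, sum, and let $\Delta t \to 0$; the main obstacle throughout is bookkeeping the dependence of all constants on $c$ — which enters only through $L$ in Theorem~\ref{thm:SRS} — so that, as claimed, $L$ blows up as $c \to 0$ but $T$ and $\delta$ can be kept away from zero.
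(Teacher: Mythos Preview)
Your overall strategy---operator splitting via the Euler--convective step $\hat F$, with $\BV$/$\L1$ bounds propagated by a discrete Gronwall argument---matches the paper's, and it is adequate for \ref{it:TV}), \ref{it:Process}), \ref{it:Tangent}) and for the Lipschitz dependence of $\hat P$ on the \emph{initial datum}, provided you replace the bare $\L1$ distance by an equivalent functional whose per-step constant is $1+O(\Delta t)$ (this is Lemma~\ref{lem:Stime} in the paper); otherwise the fixed constant $L$ from \ref{it:Lipschitz}) in Theorem~\ref{thm:SRS} iterates to $L^{T/\Delta t}$ and the scheme diverges.

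The genuine gap is your derivation of the dependence on the boundary \emph{curve} in \ref{it:Hard}). On each convective step of length $\Delta t$, the estimate \ref{it:Lipschitz}) of Theorem~\ref{thm:SRS} contributes $L\cdot\sup_\tau\modulo{\gamma(\tau)-\bar\gamma(\tau)}$, a quantity that is \emph{not} proportional to $\Delta t$; summing over $T/\Delta t$ steps it blows up, and no Gronwall absorbs it. (The term $\int\norma{g-\bar g}$ causes no trouble because it is additive over subintervals; $\sup\modulo{\gamma-\bar\gamma}$ is not.) The paper therefore does \emph{not} obtain the $\gamma$-dependence by iterating the splitting. It first proves Lipschitz dependence on $u$ alone, then fixes the process $\hat P^{\bar g,\bar\gamma}$ and applies the error formula \cite[Theorem~2.9]{BressanLectureNotes} to the curve $\tau\mapsto \chi_{[\bar\gamma(\tau),+\infty[}\,\hat P^{g,\gamma}(\tau,0)u$; the resulting integrand is the instantaneous discrepancy between the tangent vectors $F^{g(\tau),\gamma}$ and $F^{\bar g(\tau),\bar\gamma}$ near $x=\bar\gamma(\tau)$, and bounding it requires control of the trace $\tau\mapsto u\left(\tau,\bar\gamma(\tau)\right)$, which is exactly what Proposition~\ref{prop:curvesSource} delivers. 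Your argument for \ref{it:bdr}) has the same missing ingredient: the Euler correction $t\,G(u)\chi$ is only $\BV$, not continuous, so it \emph{does} alter the trace at $\gamma(t)+$; passing the a.e.\ boundary condition from the splitting approximations to the limit needs $\L1$ convergence of those traces along $\gamma$, and that again is Proposition~\ref{prop:curvesSource}, not something inherited for free from $P$.
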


\noindent Again, as remarked after Theorem~\ref{thm:SRS}, the
Lipschitz constant in general blows up as $c \to 0$.  The proof of
this result is deferred to Section~\ref{sec:TechSource}, it heavily
relies on Theorem~\ref{thm:SRS}. Remark that it is possible to extend
to the non homogeneous case also Proposition~\ref{prop:curves}.

\begin{proposition}
  \label{prop:curvesSource}
  Let system~(\ref{eq:BL}) satisfy the same assumptions of
  Theorem~\ref{thm:main} and call $u$ the solution to~(\ref{eq:BL})
  constructed therein. Let $\Gamma_0, \Gamma_1$ be
  $\tilde\ell$--non-characteristic curves, for $\tilde\ell \in \{1,
  \ldots, n-1\}$. Then, for all $u_o$ and $g$, there exists a constant
  $\mathcal{K} > 0$ such that
  \begin{displaymath}
    \int_0^{T}
    \norma{u\left(t, \Gamma_0(t) \right) - u\left(t, \Gamma_1(t) \right)}
    dt
    \leq
    \frac{\mathcal{K}}{c} \, \norma{\Gamma_1 - \Gamma_0}_{\C0([0,T])}.
  \end{displaymath}
\end{proposition}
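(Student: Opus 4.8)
The plan is to reduce the non-homogeneous estimate to the homogeneous one already established in Proposition~\ref{prop:curves}, by exploiting the operator-splitting structure encoded in~\ref{it:Tangent}) of Theorem~\ref{thm:main}. First I would fix the solution $u(t,x) = \left(\hat P(t,0) u_o\right)(x)$ and recall that, by construction, $\hat P$ is obtained as the limit of the Euler polygonals built by alternately applying the homogeneous process $P$ and the source increment $t\,G(u)\,\chi_{[\gamma,+\infty[}$; on each small time step the solution is, up to an error that is superlinear in the step size, a solution of the purely convective problem~(\ref{eq:CPHCL}) with a frozen boundary datum. Partition $[0,T]$ into $N$ intervals $[t_k,t_{k+1}]$ of length $T/N$, and on each one compare $u$ with the convective solution $u_k$ emanating from $u(t_k,\cdot)$ with boundary data $g$ restricted to that interval.

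The key steps are then: (i) On $[t_k,t_{k+1}]$, apply Proposition~\ref{prop:curves} to $u_k$ along the curves $\Gamma_0,\Gamma_1$ (translated to start at time $t_k$, still $\tilde\ell$-non-characteristic), obtaining
\begin{displaymath}
  \int_{t_k}^{t_{k+1}}
  \norma{u_k(t,\Gamma_0(t)) - u_k(t,\Gamma_1(t))}\,dt
  \leq
  \frac{\mathcal{K}}{c}\,\bigl(\tv(u_k(t_k,\cdot)) + \tv(g)\bigr)\,\norma{\Gamma_1-\Gamma_0}_{\C0}.
\end{displaymath}
(ii) Control $\tv(u(t_k,\cdot))$ uniformly in $k$ and $N$: this follows from the $\BV$ bounds implicit in $\hat P(t,0)u_o \in \hat{\mathcal{D}}_t \subseteq \BV$ (item~\ref{it:TV}) together with assumption~($\boldsymbol{G}$), exactly as in the proof of Theorem~\ref{thm:main} — the total variation grows at most by $\O\cdot L_2\,(T/N)$ per step, hence stays bounded by a constant depending only on $\tv(u_o)$, $\tv(g)$, $L_2$ and $T$, absorbed into $\mathcal{K}$. (iii) Estimate the discrepancy $\norma{u(t,\Gamma_i(t)) - u_k(t,\Gamma_i(t))}$ on $[t_k,t_{k+1}]$: since both $u$ and $u_k$ agree at $t_k$ and $u$ differs from the frozen convective evolution only through the source term, the Lipschitz estimate~\ref{it:Hard}) (or the tangent-vector property~\ref{it:Tangent}) combined with a Gronwall argument) gives a bound of order $\O\cdot L_1\,(T/N)^2$ per step in $\L1$ on horizontal slices; converting this to the trace along a non-characteristic curve costs a factor $1/c$ (as in the passage from $\L1$-in-space to $\L1$-along-curves underlying Proposition~\ref{prop:curves}). (iv) Sum over $k=0,\dots,N-1$: the main terms add up to $\frac{\mathcal{K}}{c}\bigl(\tv(u_o)+\tv(g) + T L_2\bigr)\norma{\Gamma_1-\Gamma_0}_{\C0}$, while the error terms contribute $N\cdot\O\,(T/N)^2 = \O\,T^2/N \to 0$ as $N\to\infty$. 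Since the left-hand side does not depend on $N$, we conclude with a constant $\mathcal{K}$ depending on $c$ only through the explicit $1/c$ factor, and on $u_o,g$ only through $\tv(u_o)+\tv(g)$, which for fixed data is a constant — matching the statement.

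The main obstacle is step (iii): one must make precise the claim that, along a non-characteristic curve, the error between the true solution and the piecewise-frozen convective approximation is genuinely second order in the step size. On horizontal slices this is immediate from~\ref{it:Tangent}), but the trace operator along $\Gamma_i$ is not continuous on $\L1$ in general, so one cannot simply read off the trace estimate from the $\L1$ estimate. The remedy is to run the comparison at the level of the curve-traces directly: apply the curve-variation estimate of Proposition~\ref{prop:curves} not only to $u_k$ but also to the "error solution", noting that the source increment $t\,G(u)\,\chi_{[\gamma(t_o+t),+\infty[}$ is itself a $\BV$ function with total variation $\O\cdot L_2$, so the convective flow of this perturbation over a step of length $T/N$ has trace-variation $\O\cdot L_2/c$ and $\L1$-size $\O\cdot L_2\,(T/N)$, yielding a trace-discrepancy of order $\O\cdot (L_2/c)\,(T/N)^2$ per step. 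Summing still gives $\O/N \to 0$, which closes the argument. This is exactly the same mechanism used in the proof of Theorem~\ref{thm:main} to pass from Theorem~\ref{thm:SRS} to the non-homogeneous setting, so the details parallel that proof closely.
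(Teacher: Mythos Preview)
There is a genuine gap in your argument, and it appears already at the conjunction of steps~(i) and~(iv). When you apply Proposition~\ref{prop:curves} on the subinterval $[t_k,t_{k+1}]$, the constant on the right is $\tv\bigl(u(t_k,\cdot)\bigr)+\tv(g)$, which is of order one, not of order $T/N$. Summing $N$ such estimates therefore produces a bound of order $N\cdot\frac{\mathcal{K}}{c}\bigl(\tv(u_o)+\tv(g)\bigr)\norma{\Gamma_1-\Gamma_0}_{\C0}$, which diverges as $N\to\infty$; it does \emph{not} telescope to $\frac{\mathcal{K}}{c}\bigl(\tv(u_o)+\tv(g)+TL_2\bigr)\norma{\Gamma_1-\Gamma_0}_{\C0}$ as you claim. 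The reason is that the estimate in Proposition~\ref{prop:curves} ultimately comes from the bound on $\tv\bigl(u(\cdot,\Gamma(\cdot));[0,T]\bigr)$ (see~(\ref{eq:yes}) in the proof of Lemma~\ref{lem:curves}), and this trace-variation is controlled by the \emph{initial} spatial variation, independently of how short the time interval is. Restarting the convective problem at each $t_k$ reintroduces the full $\tv\bigl(u(t_k,\cdot)\bigr)$ each time.

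Your step~(iii) is independently problematic: the tangency condition~\emph{\ref{it:Tangent})} is an $\L1$ statement, and the trace along a non-characteristic curve is not continuous with respect to the $\L1$ topology, so you cannot deduce that the trace discrepancy $\int_{t_k}^{t_{k+1}}\norma{u(t,\Gamma_i(t))-u_k(t,\Gamma_i(t))}\,dt$ is $o(T/N)$ from the $\L1$ tangency alone. The remedy you sketch (controlling the trace-variation of the source increment) gives a bound on the variation of the perturbation, but bounded variation does not imply smallness of the trace.

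The paper avoids both difficulties by working directly at the level of piecewise-constant operator-splitting approximations $u^{\epsilon,\tilde\epsilon,N}$ and introducing a Glimm-type functional $\Xi^\epsilon$ (see~(\ref{eq:Xi})) that simultaneously tracks the wave strengths on either side of $\Gamma$ and the cumulative trace-variation $\tv\bigl(u^{\epsilon,\tilde\epsilon,N}(\cdot,\Gamma(\cdot));[0,t]\bigr)$. This functional is non-increasing between source updates and jumps by at most $C\epsilon$ at each update, yielding a uniform bound on the trace-variation over $[0,T]$. With that bound in hand, the argument of Lemma~\ref{lem:curves} applies verbatim to the approximations, and one passes to the limit as in Proposition~\ref{prop:curves}. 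The essential point you are missing is precisely this cumulative control of the trace-variation across the source updates; without it, no decomposition into short convective steps can succeed.
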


\begin{remark}
  \label{finalremark}
  Proposition \ref{prop:curvesSource} implies also that, if $\Gamma$
  is any $\ell$--non-characteristic curve, then the map $x \to
  \left(\hat P(t,0)u \right)(x)$ is continuous in
  $x = \Gamma(t)$ for almost all $t \in [0,T]$. Indeed, denote
  $u(t,x) = \left(\hat P(t,0)u\right)(x)$ and compute
  \begin{equation*}
    \begin{split}
      & \int_0^T
      \modulo{u\left(t,\Gamma(t)-\right)-u\left(t,\Gamma(t)\right)} \,
      dt
      \\
      & = \lim_{\epsilon\to 0} \frac{1}{\epsilon} \int_0^\epsilon
      \int_0^T
      \modulo{u\left(t,\Gamma(t)-x\right)-u\left(t,\Gamma(t)\right)}
      \, dt\, dx
      \\
      & \leq \frac{\mathcal{K}}{c} \lim_{\epsilon\to 0}
      \frac{1}{\epsilon}\int_0^\epsilon x\,dx = 0
    \end{split}
  \end{equation*}
\end{remark}

Contrary to Proposition~\ref{prop:curves}, the uniqueness result of
Proposition~\ref{prop:uniqueness} may not be extended to the present
non homogeneous case, due to the non local nature of the source term
here considered. Indeed, let\\
\smallskip
\begin{minipage}{0.44\linewidth}
  \begin{psfrags}
    \psfrag{u12}{\tiny{$ $}}
    \psfrag{u1}{\tiny{$u=1$}}
    \psfrag{u0}{\tiny{$u=0$}}
    \psfrag{0}{$0$} \psfrag{1}{$1$} \psfrag{2}{$2$} \psfrag{3}{$3$}
    \psfrag{4}{$4$} \psfrag{gt}{$\tilde\gamma(t)$}
    \psfrag{g}{$\gamma(t)$} \psfrag{t}{$t$} \psfrag{x}{$x$}
    \psfrag{u}{$u(t,x)$}
    \includegraphics[width=48mm]{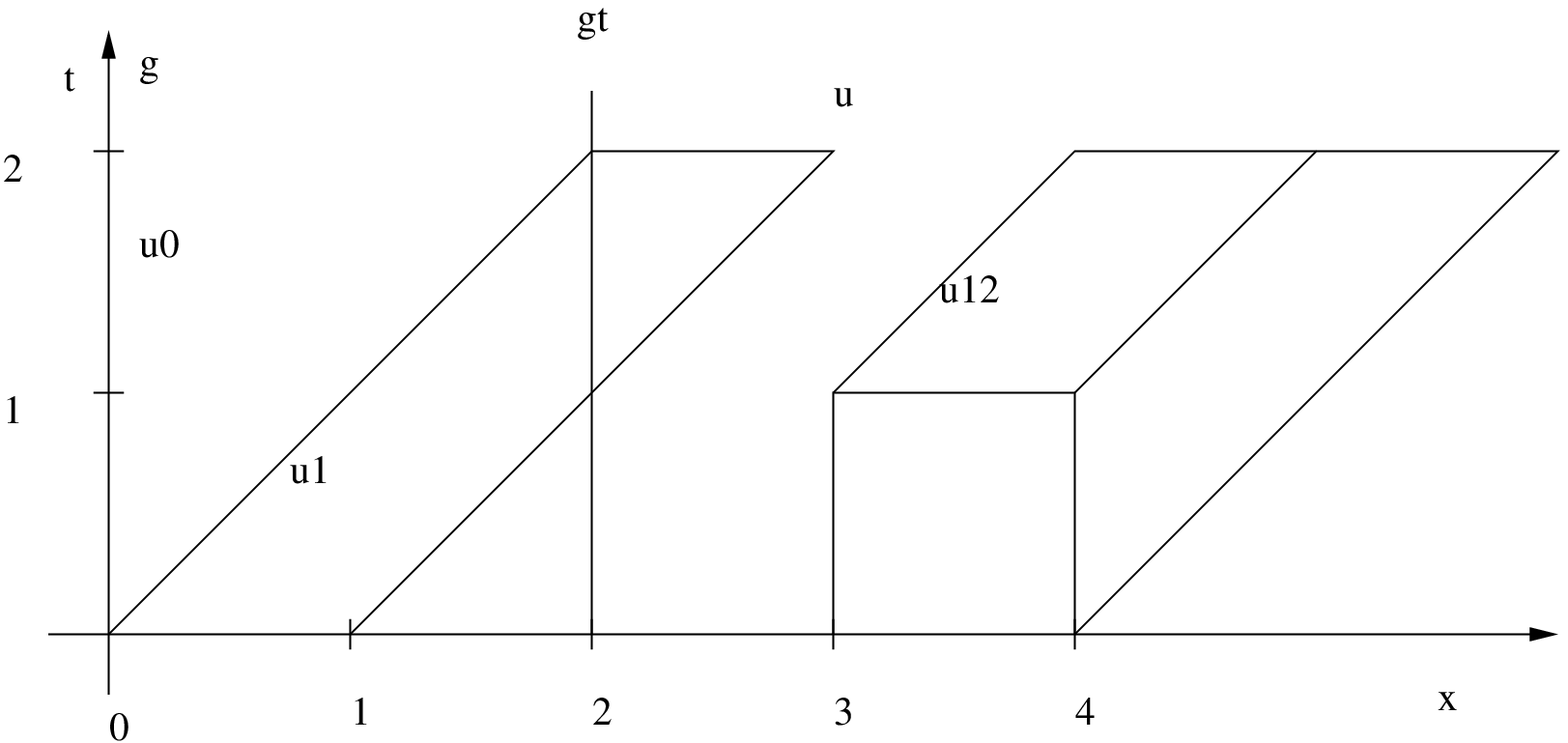}
  \end{psfrags}
\end{minipage}
\begin{minipage}{0.55\linewidth}
  \begin{equation}
    \label{eq:NonUnicita}
    \!\!\!\!\!\!\!\!\!
    \left\{\!\!
      \begin{array}{l}
        \partial_t u + \partial_x u = 
        \left( \int_0^1 u(t,\xi)\, d\xi \right)
        \, \chi_{\strut[3,4]}(x)
        \\
        u(t,0)=0
        \\
        u(0,x)=\chi_{\strut[0,1]}(x) \,.
      \end{array}
    \right.
    \!\!\!\!\!\!
  \end{equation}
\end{minipage}\\
It is immediate to verify that the assumptions of
Theorem~\ref{thm:main} hold. The solution $u$, shown above, is non
zero in the delimited area above and, in particular, for $t \in [0,1]$
and $x \in [3,4]$ but it vanishes for $t \in [0,1]$ and
$x=1$. Therefore, with the same notation of
Proposition~\ref{prop:curves}, letting $\gamma(t) = 0$ and $\tilde
\gamma(t) = 2$ we have $\tilde g(t)=0$ for $t \in
[0,1]$. Problem~(\ref{eq:small}) thus admits, in the present case,
only the trivial solution $u \equiv 0$, contradicting what would be
the analog of Proposition~\ref{prop:curves} in the non homogeneous
case.

\Section{Proofs Related to Section~\ref{sec:Convective}}
\label{sec:TechConv}

Below, $C$ denotes a positive constant dependent only on $f$, $G$ and
$b$ whose precise value is not relevant.

This section is devoted to the homogeneous initial boundary value
problem~(\ref{eq:CPHCL}) and proves Theorem~\ref{thm:SRS}. Our general
reference on the theory of conservation laws
is~\cite{BressanLectureNotes}.

Let $\sigma \to R_j(\sigma)(u)$, respectively $\sigma \to
S_j(\sigma)(u)$, be the $j$-rarefaction curve, respectively the
$j$-shock curve, exiting $u$. If the $j$-th field is linearly
degenerate, then the parameter $\sigma$ above is the arc-length. In
the genuinely nonlinear case,
see~\cite[Definition~5.2]{BressanLectureNotes}, we choose $\sigma$ so
that (see~\cite[formula~(5,37) and~Remark~5.4]{BressanLectureNotes})
\begin{equation}
  \label{eq:strenghtchoice}
  \frac{\partial \lambda_j}{\partial\sigma}
  \left(R_j (\sigma)(u) \right) 
  =
  1
  \quad \mbox{ and } \quad
  \frac{\partial \lambda_j}{\partial\sigma}
  \left(S_j (\sigma)(u) \right) 
  =
  1 \,.
\end{equation}
Introduce the $j$-Lax curve
\begin{displaymath}
  \sigma \to \psi_j (\sigma) (u) =
  \left\{
    \begin{array}{c@{\qquad\mbox{ if }\quad}rcl}
      R_j(\sigma)(u) & \sigma & \geq & 0
      \\
      S_j(\sigma)(u) & \sigma & < & 0
    \end{array}
  \right.
\end{displaymath}
and for $\boldsymbol{\sigma} \equiv (\sigma_1, \ldots, \sigma_n)$,
define the map
\begin{displaymath}
  \boldsymbol{\Psi}(\boldsymbol{\sigma}) 
  =
  \psi_n(\sigma_n) \circ \ldots \circ \psi_1(\sigma_1) 
  \,.
\end{displaymath}
By~\textbf{($\boldsymbol{f}$)},
see~\cite[Paragraph~5.3]{BressanLectureNotes}, given any two states
$u^-,u^+ \in \Omega$ sufficiently close to $0$, there exists a $\C2$
map $E$ such that
\begin{equation}
  \label{eq:E}
  \boldsymbol{\sigma} = E(u^-,u^+)
  \quad \mbox{ if and only if } \quad
  u^+ = \boldsymbol{\Psi}(\boldsymbol{\sigma})(u^-) \,.
\end{equation}
Similarly, let the map $\boldsymbol{S}$ and the vector $\boldsymbol{q}
= ( q_1, \ldots, q_n)$ be defined by
\begin{equation}
  \label{eq:S}
  u^+
  =
  \boldsymbol{S} (\boldsymbol{q})(u^-)
  \quad \mbox{ and } \quad
  \boldsymbol{S} (\boldsymbol{q}) 
  =
  S_n(q_n) \circ \ldots \circ S_1(q_1)
  \,,
\end{equation}
i.e.~$\boldsymbol{S}$ is the gluing of the Rankine - Hugoniot curves.

We first consider the non characteristic Riemann problem at the
boundary
\begin{equation}
  \label{eq:RP}
  \left\{
    \begin{array}{l@{\qquad}rcl}
      \partial_t u +\partial_x f(u) = 0
      & x & > & \gamma(t)
      \\
      b\left(u \left(t,\gamma ( t ) \right) \right) = g_o
      & t & \geq & 0
      \\
      u(0,x) = u_o
      & x & \geq & 0
    \end{array}
  \right.
\end{equation}
where $g_o \in \reali^{n-\ell}$ and $u_o \in \Omega$ are constants and
$\gamma$ satisfies~\textbf{($\boldsymbol{\gamma}$)}. Then, a solution
to~(\ref{eq:RP}) is constructed as in~\cite{Goodman}, see
also~\cite{Amadori1, AmadoriColombo1, ColomboRosini4}.

\begin{lemma}
  \label{lem:RP}
  Let~\textbf{(f)}, \textbf{($\boldsymbol{\gamma}$)} and~\textbf{(b)}
  hold. If $u_o, g_o$ are sufficiently small, then there exists unique
  $E_b^\sigma, E_b^q$ of class $\C2$ and states $u^\sigma, u^q$ such
  that
  \begin{eqnarray*}
    (\sigma_{\ell+1}, \ldots, \sigma_n) =  E_b^\sigma(u_o,g_o)
    & \iff &
    \left\{
      \begin{array}{l}
        b(u^\sigma) = g_o
        \mbox{ and }
        \\
        \psi_n(\sigma_n) \circ \ldots \circ
        \psi_{\ell+1}(\sigma_{\ell+1}) (u^\sigma) = u_o \,,
      \end{array}
    \right.
    \\
    (q_{\ell+1}, \ldots, q_n) = E_b^q(u_o,g_o)
    & \iff &
    \left\{
      \begin{array}{l}
        b(u^q) = g_o
        \mbox{ and }
        \\
        S_n(q_n) \circ \ldots \circ S_{\ell+1}(q_{\ell+1}) (u^q) = u_o \,.
      \end{array}
    \right.
  \end{eqnarray*}
\end{lemma}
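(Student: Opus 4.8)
The plan is to reduce both equivalences to applications of the implicit function theorem, exactly as the classical Riemann problem solvability of $\boldsymbol{\Psi}$ and $\boldsymbol{S}$ does, but now with the extra $n-\ell$ equations coming from the boundary condition $b(\cdot) = g_o$. First I would treat the $\psi$-version. Define the map
\begin{displaymath}
  \Phi \colon (\sigma_{\ell+1}, \ldots, \sigma_n; u^\sigma) \longmapsto
  \left(
    b(u^\sigma) - g_o, \;
    \psi_n(\sigma_n) \circ \ldots \circ \psi_{\ell+1}(\sigma_{\ell+1})(u^\sigma) - u_o
  \right)
\end{displaymath}
which takes values in $\reali^{n-\ell} \times \reali^n$, and whose unknowns $(\sigma_{\ell+1}, \ldots, \sigma_n, u^\sigma)$ also number $(n-\ell) + n$. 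At $u_o = 0$, $g_o = 0$ one has the solution $\sigma_{\ell+1} = \cdots = \sigma_n = 0$, $u^\sigma = 0$, using $b(0)=0$ and $\boldsymbol\Psi(0)(0)=0$. The point is that $\Phi$ is $\C2$: the Lax curves $\psi_j$ are $\C2$ in $(\sigma,u)$ jointly by assumption~\textbf{($\boldsymbol{f}$)} (genuine nonlinearity or linear degeneracy, with the normalization~(\ref{eq:strenghtchoice})), and $b \in \C1$ — here one must be slightly careful, since $b$ is only $\C1$, so $E_b^\sigma$ will be $\C1$ in $g_o$; I would state $\C2$ regularity in $u_o$ and rely on $\C1$ in $g_o$, or simply note that the composition inherits the weakest regularity, matching the phrasing "of class $\C2$" as intended for the dependence that matters. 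To apply the implicit function theorem I must compute the Jacobian of $\Phi$ with respect to $(\sigma_{\ell+1}, \ldots, \sigma_n, u^\sigma)$ at the origin and show it is invertible.

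That Jacobian, in block form, is
\begin{displaymath}
  \left[
    \begin{array}{cc}
      0 & Db(0) \\[2pt]
      \big[ r_{\ell+1}(0) \; \cdots \; r_n(0) \big] & \Id
    \end{array}
  \right],
\end{displaymath}
because $\partial_{\sigma_j}\big(\psi_n(\sigma_n)\circ\cdots\circ\psi_{\ell+1}(\sigma_{\ell+1})\big)(u^\sigma)\big|_0 = r_j(0)$ (tangency of each Lax curve to its eigenvector) and the $u^\sigma$-derivative of that composition at the origin is the identity. Invertibility of this block matrix is equivalent — by a Schur-complement computation, eliminating $u^\sigma$ from the second block of equations — to invertibility of $-Db(0)\big[r_{\ell+1}(0)\;\cdots\;r_n(0)\big]$, which is precisely the nondegeneracy hypothesis in~\textbf{($\boldsymbol{b}$)}. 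Hence the implicit function theorem yields, for $(u_o, g_o)$ in a neighborhood of $(0,0)$, a unique $\C2$ (in $u_o$) solution $(\sigma_{\ell+1}, \ldots, \sigma_n) = E_b^\sigma(u_o, g_o)$ together with the auxiliary state $u^\sigma$; this gives the first equivalence, with "if and only if" being immediate from the construction since $\Phi = 0$ is exactly the right-hand condition.

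For the $\boldsymbol{S}$-version the argument is identical: replace each $\psi_j$ by the shock curve $S_j$ in the definition of $\Phi$. The only input that changes is the derivative $\partial_{q_j} S_j(q_j)(u)\big|_{q_j=0} = r_j(u)$, which again holds because the Rankine–Hugoniot curve is tangent to $r_j$ at $q_j = 0$; the Jacobian block structure, and therefore its invertibility under~\textbf{($\boldsymbol{b}$)}, is unchanged. This produces $E_b^q$ and $u^q$ with the stated properties. I expect the main obstacle to be purely bookkeeping: verifying cleanly that the $u^\sigma$-block of the Jacobian of the composed Lax map is the identity at the origin (one should expand $\psi_n(0)\circ\cdots\circ\psi_{\ell+1}(0) = \Id$ and differentiate) and organizing the Schur complement so that the sign and ordering of the columns $r_{\ell+1}(0), \ldots, r_n(0)$ match the determinant in~\textbf{($\boldsymbol{b}$)} exactly. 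Everything else is a direct citation of the standard Lax-curve machinery from~\cite[Paragraph~5.3]{BressanLectureNotes} plus one application of the implicit function theorem.
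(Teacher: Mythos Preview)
Your argument is correct and arrives at the same conclusion via the implicit function theorem, but the paper organises the computation differently. Rather than carrying $u^\sigma$ as an additional unknown and inverting a $(2n-\ell)\times(2n-\ell)$ block Jacobian through a Schur complement, the paper first introduces the \emph{inverse} Lax curves $\bar\psi_j$, satisfying $\bar\psi_j(-\sigma_j)\circ\psi_j(\sigma_j)=\Id$, and uses them to write $u^\sigma$ explicitly as $\bar\psi_{\ell+1}(-\sigma_{\ell+1})\circ\cdots\circ\bar\psi_n(-\sigma_n)(u_o)$. This reduces the problem to a single $(n-\ell)$-dimensional equation $G(\sigma_{\ell+1},\ldots,\sigma_n,g_o,u_o)=0$, whose Jacobian in the $\sigma$-variables is directly $(-1)^{n-\ell}\big[Db(0)r_{\ell+1}(0)\;\cdots\;Db(0)r_n(0)\big]$, so~\textbf{($\boldsymbol{b}$)} applies without any block algebra. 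Your route has the advantage of not needing the inverse curves at all; the paper's route keeps the linear algebra minimal and makes the sign and column ordering against~\textbf{($\boldsymbol{b}$)} transparent. Your observation about the regularity mismatch (the statement claims $\C2$ while $b$ is only assumed $\C1$) is well taken and applies equally to the paper's own proof.
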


\begin{proof}
  We prove this statement only for the Lax curves, the results for the
  shock curves is proved similarly. Let $\sigma_i\to \bar
  \psi_i(\sigma_i)(u)$ be the inverse Lax curve, i.e.
  \begin{displaymath}
    \sigma \to \bar\psi_j (\sigma) (u) =
    \left\{
      \begin{array}{c@{\qquad\mbox{ if }\quad}rcl}
        S_j(\sigma)(u) & \sigma & \geq & 0
        \\
        R_j(\sigma)(u) & \sigma & < & 0.
      \end{array}
    \right.
  \end{displaymath}
  The choice~(\ref{eq:strenghtchoice}) of the parameters implies that
  $\bar \psi_i(-\sigma_i) \circ \psi_i(\sigma_i)(u) = u$ for all small
  $u$ and $\sigma_i$. Define the $\C2$ function
  \begin{displaymath}
    G \left( \sigma_{\ell+1}, \ldots, \sigma_n,g_o,u_o \right)
    =
    b \left(
      \bar\psi_{\ell+1} (-\sigma_{\ell+1}) 
      \circ \ldots \circ
      \bar\psi_{n}(-\sigma_n) (u_o)
    \right) - g_o \,.
  \end{displaymath}
  By~\textbf{($\boldsymbol{b}$)}, $G$ satisfies $G(0,0,0)=0$ and
  \begin{displaymath}
    \det D_{\left(\sigma_{\ell+1},\ldots,\sigma_{n}\right)}G(0,0,0)=
    (-1)^{n-\ell}\det \left[
      Db(0) \, r_{\ell+1}(0) \cdots 
      Db(0) \, r_{n}(0)
    \right] \neq 0 \,.
  \end{displaymath}
  The Implicit Function Theorem guarantees the existence of a map
  $E_b^\sigma = E_b^\sigma (u_o,g_o)$ with the required properties, if
  $(\sigma_{\ell+1}, \ldots, \sigma_n) = E_b^\sigma(u_o,g_o)$ and
  $u^\sigma = \bar\psi_{\ell+1}(-\sigma_{\ell+1}) \circ \ldots \circ
  \bar\psi_{n}(-\sigma_n) (u_o)$.
\end{proof}

The notation introduced above allows the definition of a local flow
tangent to the process generated by~(\ref{eq:CPHCL}). Fix $t_o \geq 0$
and $u \in \mathcal{D}_{t_o}$, define $g_o = g(t_o+)$, $u_o = u\left(
  \gamma(t_o)+\right)$ and $u^\sigma$ as in Lemma~\ref{lem:RP}. Let
\begin{equation}
  \label{eq:tilde}
  \tilde u(x)
  = 
  \left\{
    \begin{array}{l@{\quad\mbox{if }}rcl}
      u^\sigma & x & < & \gamma(t_o)
      \\
      u(x) & x & \geq & \gamma(t_o)
    \end{array}
  \right.
\end{equation}
Call $\mathcal{S}$ the Standard Riemann Semigroup generated by $f$,
see~\cite[Definition~9.1]{BressanLectureNotes}. Finally, for $t \geq
0$, define the tangent vector, see~\cite[Section~5]{BressanCauchy},
\begin{equation}
  \label{eq:local}
  \left( F(t,t_o) u \right) (x) 
  = 
  \left\{
    \begin{array}{l@{\quad\mbox{if }}rcl}
      0 & x & < & \gamma(t_o+t)
      \\
      (\mathcal{S}_t \tilde u)(x) & x & \geq & \gamma(t_o+t)
    \end{array}
  \right.
\end{equation}

We record here the following interaction estimates,
see~Figure~\ref{fig:p}.

\begin{lemma}
  \label{lem:barK}
  Let~\textbf{(f)}, \textbf{($\boldsymbol{\gamma}$)} and~\textbf{(b)}
  hold.  If the following relations hold
  \begin{displaymath}
    \begin{array}{rcl@{\,,\qquad}rcl}
      g^- & = & b(u^-)
      &
      u_r & = &
      \psi_n(\sigma_n) \circ \ldots \circ \psi_1(\sigma_1)(u^-)
      \\
      g^+ & = & b(u^+)
      &    u_r & = &
      \psi_n(\tilde\sigma_n) \circ \ldots \circ 
      \psi_{\ell+1}(\tilde\sigma_{\ell+1})(u^+)
    \end{array}
  \end{displaymath}
  then, we have the estimate
  \begin{displaymath}
    \sum_{i=\ell+1}^n
    \modulo{\tilde \sigma_{i} - \sigma_{i}}
    \leq
    C \left(\sum_{i=1}^\ell \modulo{\sigma_i} + \norma{g^+-g^-} \right) \,.
  \end{displaymath}
  Analogously, for the shock curves, if $\omega$ is a small vector
  satisfying
  \begin{eqnarray*}
    \begin{array}{rcl}
      g & = &b(u),
      \\
      \bar g & = &b(v),
    \end{array}
    \quad \mbox{ and } \quad
    v+\omega =  S_n(q_n) \circ \ldots \circ S_1(q_1)(u)
  \end{eqnarray*}
  then, we have the estimate
  \begin{displaymath}
    \sum_{i=\ell+1}^n
    \modulo{q_{i}}
    \leq 
    C \left(
      \sum_{i=1}^\ell \modulo{q_i}
      +
      \norma{\bar g - g}
      + 
      \norma{\omega}
    \right) \,.
  \end{displaymath}
\end{lemma}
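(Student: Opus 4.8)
The plan is to reduce both estimates to the local Lipschitz continuity of the maps $E_b^\sigma$ and $E_b^q$ furnished by Lemma~\ref{lem:RP}: being of class $\C2$, these maps are Lipschitz on a neighbourhood of the origin. The geometric idea behind the reduction is to split each wave fan into its \emph{outgoing part}, carried by the characteristic families $1,\ldots,\ell$, and its \emph{boundary part}, carried by the families $\ell+1,\ldots,n$; the boundary part is exactly the quantity that $E_b^\sigma$ (resp.\ $E_b^q$) computes, once one keeps track of how the outgoing fan perturbs the boundary value $b$ of the state on which it acts. Since all the data involved are assumed small, every intermediate state and wave strength stays in the region where Lemma~\ref{lem:RP} and the elementary bounds $\norma{\psi_j(\sigma)(u)-u}\le C\modulo{\sigma}$, $\norma{S_j(\sigma)(u)-u}\le C\modulo{\sigma}$ hold, so the constants can be chosen uniformly.

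For the first estimate, set $\hat u = \psi_\ell(\sigma_\ell)\circ\ldots\circ\psi_1(\sigma_1)(u^-)$ and $\hat g = b(\hat u)$. The hypothesis $u_r = \psi_n(\sigma_n)\circ\ldots\circ\psi_1(\sigma_1)(u^-)$ then reads $u_r = \psi_n(\sigma_n)\circ\ldots\circ\psi_{\ell+1}(\sigma_{\ell+1})(\hat u)$ with $b(\hat u)=\hat g$, so the characterisation in Lemma~\ref{lem:RP} gives $(\sigma_{\ell+1},\ldots,\sigma_n) = E_b^\sigma(u_r,\hat g)$; similarly the second hypothesis together with $g^+ = b(u^+)$ gives $(\tilde\sigma_{\ell+1},\ldots,\tilde\sigma_n) = E_b^\sigma(u_r,g^+)$. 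Lipschitz continuity of $E_b^\sigma$ in its second argument yields $\sum_{i=\ell+1}^n \modulo{\tilde\sigma_i-\sigma_i}\le C\norma{g^+-\hat g}$; and since $\norma{\hat u-u^-}\le C\sum_{i=1}^\ell\modulo{\sigma_i}$ and $b\in\C1$, one has $\norma{\hat g-g^-}=\norma{b(\hat u)-b(u^-)}\le C\sum_{i=1}^\ell\modulo{\sigma_i}$, whence $\norma{g^+-\hat g}\le\norma{g^+-g^-}+C\sum_{i=1}^\ell\modulo{\sigma_i}$ by the triangle inequality. Combining these gives the claim.

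For the second estimate one argues identically with the Rankine--Hugoniot curves in place of the Lax curves. Put $\hat u = S_\ell(q_\ell)\circ\ldots\circ S_1(q_1)(u)$ and $\hat g = b(\hat u)$, so that $v+\omega = S_n(q_n)\circ\ldots\circ S_{\ell+1}(q_{\ell+1})(\hat u)$ and hence $(q_{\ell+1},\ldots,q_n) = E_b^q(v+\omega,\hat g)$ by Lemma~\ref{lem:RP}. The key observation is that $E_b^q(v,\bar g)=0$, since the state $u^q=v$ trivially satisfies the two defining relations when the strengths vanish and $b(v)=\bar g$. Lipschitz continuity of $E_b^q$ then gives $\sum_{i=\ell+1}^n\modulo{q_i}\le C\left(\norma{\omega}+\norma{\hat g-\bar g}\right)$, and, exactly as above, $\norma{\hat g-\bar g}\le\norma{\hat g-g}+\norma{g-\bar g}\le C\sum_{i=1}^\ell\modulo{q_i}+\norma{g-\bar g}$, using $\norma{\hat u-u}\le C\sum_{i=1}^\ell\modulo{q_i}$ and $g=b(u)$.

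The only genuinely delicate point is the bookkeeping: one must check that the pairs of states fed into $E_b^\sigma$ and $E_b^q$ are precisely those appearing in the iff-characterisations of Lemma~\ref{lem:RP}, and in particular that the base point of the $(\ell+1,\ldots,n)$-fan is the state $\hat u$ reached after the outgoing $(1,\ldots,\ell)$-fan. Once this is correctly set up, the rest is the routine Lipschitz-plus-triangle-inequality computation sketched above, and no wave-interaction estimate for the Standard Riemann Semigroup $\mathcal{S}$ is needed, the whole content being the boundary interaction recorded by $E_b^\sigma$ and $E_b^q$.
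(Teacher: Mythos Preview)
Your proof is correct and follows essentially the same route as the paper: both arguments identify the two tuples of boundary wave strengths as values of $E_b^\sigma$ (resp.\ $E_b^q$) at the common right state $u_r$ (resp.\ $v+\omega$) and at two nearby boundary values, then invoke the local Lipschitz continuity of these maps together with that of $b$ and the triangle inequality. Your introduction of the intermediate state $\hat u$ and datum $\hat g=b(\hat u)$ is just a notational convenience for what the paper writes inline as $b\left(\psi_\ell(\sigma_\ell)\circ\cdots\circ\psi_1(\sigma_1)(u^-)\right)$, and your key observation $E_b^q(v,\bar g)=0$ is exactly the one used in the paper.
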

\begin{figure}[htpb]
  \centering
  \begin{psfrags}
    \psfrag{1}{$u^-$} \psfrag{2}{$u_r$} \psfrag{3}{$u^+$}
    \psfrag{4}{$g^-$} \psfrag{5}{$g^+$} \psfrag{6}{$u_o$}
    \psfrag{7}{$u^\sigma$} \psfrag{8}{$g_o$} \psfrag{9}{$(u,v)$}
    \psfrag{a}{$\left(g,\bar g\right)$}
    \includegraphics[width=10cm]{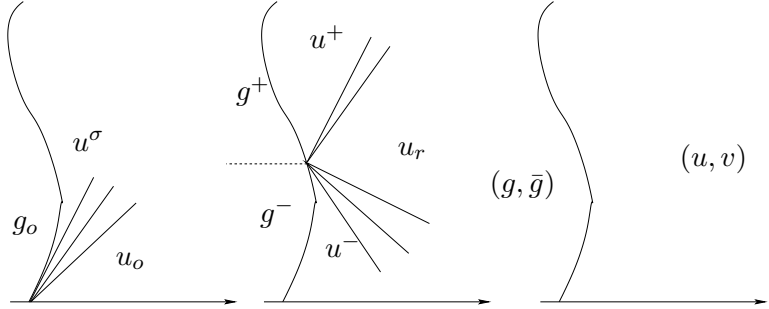}
  \end{psfrags}
  \caption{Interactions at the boundary}
  \label{fig:p}
\end{figure}
\begin{proof}
  By Lemma~\ref{lem:RP}, $(\tilde\sigma_{\ell+1}, \ldots,
  \tilde\sigma_n) = E_b^\sigma (u_r,g^+)$ and $(\sigma_{\ell+1},
  \ldots, \sigma_n)= E_b^\sigma \left(u_r, b\left(
      \psi_\ell(\sigma_\ell) \circ \cdots \circ \psi_1(\sigma_1)
      (u^-)\right) \right)$.  Therefore, the Lipschitz continuity of
  $E_b^\sigma$ implies:
  \begin{eqnarray*}
    \sum_{i=\ell+1}^n
    \modulo{\tilde \sigma_i - \sigma_{i}}
    & \leq &
    C \left(
      \norma{b(u^-) - b\left( \psi_\ell(\sigma_\ell) \circ \cdots
          \circ \psi_1(\sigma_1) (u^-)\right)}
      +
      \norma{g^+-g^-}
    \right)
    \\
    & \leq &
    C \left(
      \sum_{i=1}^\ell \modulo{\sigma_{i}} + \norma{g^+-g^-}
    \right)
  \end{eqnarray*}
  Concerning the shock curves, by Lemma~\ref{lem:RP} we can write
  \begin{eqnarray*}
    (q_{\ell+1}, \ldots, q_n) 
    & = & 
    E_b^q \left(v+\omega
      , b\left(S_\ell(q_\ell) 
        \circ \cdots \circ 
        S_1(q_1)(u)\right)
    \right) \,.
  \end{eqnarray*}
  Since $E_b^q\left(v,\bar g\right)=0$ and $b(u)=g$, the Lipschitz
  continuity of $E_b^q$ and $b$ implies
  \begin{eqnarray*}
    \sum_{\ell+1}^{n} \modulo{q_i}
    & \leq & 
    C \, \norma{
      E_b^q \left(v+\omega
        , b\left(S_\ell(q_\ell) 
          \circ \cdots \circ 
          S_1(q_1)(u)\right)\right)-E_b^q\left(v,\bar g\right)}
    \\
    & \leq & 
    C \left(
      \norma{\omega}
      +
      \norma{b\left(S_\ell(q_\ell) 
          \circ \cdots \circ 
          S_1(q_1)(u)\right)-b(u)} + \norma{g-\bar g}
    \right)
    \\
    &\leq & 
    C\left(
      \sum_{i=1}^{\ell} \modulo{q_i} + \norma{\omega} +
      \norma{\bar g-g}
    \right) \,,
  \end{eqnarray*}
  completing the proof.
\end{proof}

\begin{remark}
  We need below the first statement of Lemma~\ref{lem:barK} in the
  particular case of only one incident wave, i.e.~$u_r =
  \psi_i(\sigma_i)(u^-)$ for some $1 \leq i \leq \ell$.
\end{remark}

We follow the nowadays classical wave front tracking algorithm,
see~\cite{Amadori1, AmadoriColombo1, BressanLectureNotes,
  ColomboRosini4}, to construct solutions to the homogeneous boundary
value problem~(\ref{eq:CPHCL}). Let
$u\in\L1\left(]\gamma(t),+\infty[,\reali^n\right)$ be piecewise
constant with finitely many jumps and assume that $\tv(u)$ is
sufficiently small. Call $J(u)$ the finite set of points where $u$ has
a jump.  Let $\sigma_{x,i}$ be the strength of the $i$-th wave in the
solution of the Riemann problem for
\begin{equation}
  \label{eq:eqHCL}
  \partial_t u + \partial_x f(u) =0
\end{equation}
with data $u(x-)$ and $u(x+)$, i.e.~$(\sigma_{x,1}, \ldots,
\sigma_{x,n}) = E\left( u(x-), u(x +) \right)$. Obviously, if $x \not
\in J(u)$ then $\sigma_{x,i}=0$, for all $i=1,\ldots,n$. In $x =
\gamma(t)$ define
\begin{equation}
  \label{eq:DefSigmaBd}
  \left(\sigma_{\gamma(t),1},\ldots,\sigma_{\gamma(t),n}\right)
  =
  \left(
    0,\ldots,0, E^\sigma_b \left(u(\gamma(t)+),g(t) \right)
  \right) \,.
\end{equation}
Then, consider the Glimm functionals and potentials
\begin{equation}
  \label{eq:DefFunEsatt}
  \begin{array}{rcl}
    \boldsymbol{V}_t(u)
    & = &
    \displaystyle
    K \sum_{x\geq \gamma(t)} \sum_{i=1}^\ell \modulo{\sigma_{x,i}}
    +
    \sum_{x\geq \gamma(t)} \sum_{i=\ell+1}^{n} \modulo{\sigma_{x,i}}
    \\
    \boldsymbol{Q}_t(u)
    & = &
    \displaystyle
    \sum_{(\sigma_{x,i}, \sigma_{y,j}) \in \mathcal{A}}
    \modulo{\sigma_{x,i} \sigma_{y,j}}
    \\
    \boldsymbol{\Upsilon}_t(u)
    & = &
    \displaystyle
    \boldsymbol{V}_t(u)  
    + 
    H_2 \, \boldsymbol{Q}_t(u)
    +
    H_1 \, \tv \left\{g, \left[t,+\infty \right[ \right\}
  \end{array}
\end{equation}
the set $\mathcal{A}$ of approaching waves being defined as usual,
see~\cite{BressanLectureNotes} and the constant $K,H_1,H_2$ to be
defined later.  As in~\cite{ColomboGuerra2}, using
Lemma~\ref{lem:barK}, the Glimm functional $\boldsymbol{\Upsilon}$ can
be extended in a lower semicontinuous way to all functions with small
total variation in $\L1 (\reali; \reali^n)$ that vanish for $x \leq
\gamma(t)$. On the contrary, the interaction potential
$\boldsymbol{Q}$ alone does not admit a lower semicontinuous
extension, due to the presence of the boundary.

We now construct $\epsilon$-approximate solutions to~(\ref{eq:CPHCL})
by means of the classical wave front tracking technique,
see~\cite{BressanLectureNotes} or~\cite{Amadori1, AmadoriColombo1} for
the case with boundary.

Let $\epsilon > 0$ be fixed and approximate the initial and boundary
data in~(\ref{eq:CPHCL}) by means of piecewise constant functions
$u_o^\epsilon$ and $g^\epsilon$ such that
(see~\cite[formula~(3.1)]{ColomboGuerra2})
\begin{equation}
  \label{eq:approxinitialdata}
  \begin{array}{l}
    \norma{u_o^\epsilon - u_o}_{\L1\left(\left[\gamma(0),
          +\infty\right[;\Omega\right)} 
    < \epsilon
    \,, \quad
    \norma{g^\epsilon - g}_{\L\infty(\reali^+,\reali)} < \epsilon
    \\[15pt]
    \modulo{\boldsymbol{\Upsilon}_0(u_o^\epsilon) -
      \boldsymbol{\Upsilon}_0(u_o)} < \epsilon
    \,, \quad
    \tv \left(g^\epsilon,\left]t,+\infty \right[ \right)
    \leq 
    \tv \left( g, \left]t,+\infty\right[ \right) \hbox{ for }t\geq 0\,.
    \!\!\!\!\!\!\!\!\!\!\!\!\!\!\!\!\!\!
  \end{array}
\end{equation}
To proceed beyond time $t=0$, we construct an approximate solution
to~(\ref{eq:CPHCL}) by means of the \emph{Accurate} and
\emph{Simplified} Riemann solvers,
see~\cite[Paragraph~7.2]{BressanLectureNotes}.  Introduce the
threshold parameter $\rho > 0$ to distinguish which Riemann solver is
used at any interaction in $x>\gamma(t)$. Whenever an interaction
occurs at $(t,x)$ with $x > \gamma(t)$, proceed exactly as
in~\cite[Paragraph~7.2]{BressanLectureNotes}. Recall that the former
solver splits new rarefaction waves in fans of wavelets having size at
most $\epsilon$, while the latter yields nonphysical waves. These
waves are assigned to a fictitious $n+1$-th family and their strength
is the Euclidean distance between the states on their sides.

At any interaction involving the boundary, i.e.~when a wave hits the
boundary as well as when the approximated boundary data changes
(see~Figure~\ref{fig:p}, left and center), we use the Accurate solver,
independently from the size of the interaction. As usual, rarefaction
waves are not further split at interactions.

Along an $\epsilon$-approximate solution, for suitable constants $K,
H_1, H_2$ all greater than $1$, introduce the linear and quadratic
potentials and the Glimm functional:
\begin{equation}
  \label{eq:DefFun}
  \begin{array}{rcl}
    V^\epsilon (t)
    & = &
    \displaystyle
    K \sum_{x\geq \gamma(t)} \sum_{i=1}^\ell \modulo{\sigma_{x,i}}
    +
    \sum_{x\geq \gamma(t)} \sum_{i=\ell+1}^{n+1} \modulo{\sigma_{x,i}}
    \\
    V^\epsilon_g (t)
    & = &
    \displaystyle
    \tv \left( g^\epsilon; \left[t, +\infty\right[\right)
    \\[5pt]
    Q^\epsilon (t)
    & = &
    \displaystyle
    \sum_{(\sigma_{x,i}, \sigma_{y,j}) \in \mathcal{A}}
    \modulo{\sigma_{x,i} \sigma_{y,j}}
    \\
    \Upsilon^\epsilon (t)
    & = &
    \displaystyle
    V^\epsilon (t) + H_1 \, V^\epsilon_g (t) + H_2 \,  Q^\epsilon (t)  \,.
  \end{array}
\end{equation}
The potentials just defined differ from the ones defined
in~(\ref{eq:DefFunEsatt}) because the non--physical waves are
accounted for in a different way. They coincide at $t=0$ because of
the absence, at that time, of non-physical waves. They differ of a
quantity proportional to the total size of non physical waves for
$t>0$.

As usual, changing a little the velocities of the waves, we may assume
that no more than two waves $\sigma',\sigma''$ collide at any
interaction point $(\bar t, \bar x)$. When $\bar x > \gamma(\bar t)$,
the usual interaction estimates yield, for a constant $C>0$ dependent
only on $f$ and $\Omega$,
\begin{displaymath}
  \begin{array}{rcl@{\qquad}rcl}
    \Delta V^\epsilon (\bar t) 
    & \leq &
    C \, K \, \modulo{\sigma' \sigma''}
    &
    \Delta V^\epsilon_g (\bar t)
    & = &
    0
    \\
    \Delta Q^\epsilon (\bar t)
    & \leq &
    -\frac{1}{2} \modulo{\sigma' \sigma''}
    &
    \Delta \Upsilon^{\epsilon} (\bar t)
    & \leq &
    -\frac{H_2}{4} \modulo{\sigma' \sigma''} \,,
  \end{array}
\end{displaymath}
as soon as $C K < H_2 / 4$ and $\delta_o$ is sufficiently small.

When a wave $\sigma$ hits the boundary, Lemma~\ref{lem:barK} implies
that
\begin{displaymath}
  \begin{array}{rcl@{\qquad}rcl}
    \Delta V^\epsilon (\bar t) 
    & \leq &
    (C - K) \, \modulo{\sigma}
    &
    \Delta V^\epsilon_g (\bar t)
    & = &
    0
    \\
    \Delta Q^\epsilon (\bar t)
    & \leq &
    C \, \modulo{\sigma} \, V^\epsilon (\bar t -)
    &
    \Delta \Upsilon^{\epsilon} (\bar t)
    & \leq &
    -\frac{K}{2} \modulo{\sigma}
  \end{array}
\end{displaymath}
as soon as $K> 4C$ and $\delta_o < \frac{1}{2H_2}$.

When the boundary data changes, then
\begin{displaymath}
  \begin{array}{rcl@{\qquad}rcl}
    \Delta V^\epsilon (\bar t) 
    & \leq &
    C \modulo{\Delta g^\epsilon (\bar t)}
    &
    \Delta V^\epsilon_g (\bar t)
    & = &
    - \modulo{\Delta g^\epsilon (\bar t)}
    \\
    \Delta Q^\epsilon (\bar t)
    & \leq &
    C \modulo{\Delta g^\epsilon (\bar t)} \, V^\epsilon (\bar t-)
    &
    \Delta \Upsilon^{\epsilon} (\bar t)
    & \leq &
    -\frac{H_1}{3} \modulo{\Delta g^\epsilon (\bar t)}
  \end{array}
\end{displaymath}
as soon as $H_1 > 3C$ and $\delta_o < 1/H_2$.

The above choices are consistent. Indeed, choose first $H_1$ and $K$,
then $H_2$ and finally $\delta_o$.

The wave front tracking approximation can be constructed for all
times, indeed we show that the total number of interaction points is
finite: waves of families $1, \ldots, \ell$ are created only through
the Accurate solver and the use of the Accurate solver in
$\left\{(t,x) \colon t > 0 ,\, x > \gamma(t) \right\}$ leads to a
uniform decrease in $\Upsilon^\epsilon$. Therefore only a finite
number of waves, which can hit the boundary, is present. Since also
the jumps in the boundary are finite, there are at most a finite
number of points in the boundary with outgoing waves. This
observation, together the argument used in the standard case,
see~\cite{BressanLectureNotes}, shows that the total number of
interactions is finite on all the domain $\left\{(t,x) \colon t \geq 0
  ,\, x \geq \gamma(t) \right\}$.

As in~\cite[Paragraph~7.3]{BressanLectureNotes}, the strength of any
rarefaction, respectively nonphysical, wave is smaller than
$C\epsilon$, respectively $C\rho$. This estimate is proved simply
substituting $Q(t)$ in~\cite[formula~(7.65)]{BressanLectureNotes} with
the strictly decreasing functional $\Upsilon^\epsilon(t)$ defined
at~(\ref{eq:DefFun}).

As in the standard case, choosing $\rho$ sufficiently small we prove
that the total size of nonphysical waves is bounded by $\epsilon$. To
this aim, recall the \emph{generation order} of a wave. Waves created
at time $t = 0$, as well as waves originating from jumps in the
boundary data, are assigned order $1$. When two waves interact in the
interior $\left\{(t,x) \colon t > 0 ,\, x > \gamma(t) \right\}$ of the
domain, the usual procedure~\cite[Paragraph~7.3]{BressanLectureNotes}
is followed. When a wave of order $k$ hits the boundary, all the
reflected wave are assigned the same order $k$.

For $k \geq 1$, define
\begin{eqnarray*}
  V^\epsilon_k (t)
  & = &
  K
  \sum \left\{
    \modulo{\sigma} \colon \sigma
    \begin{array}{l}
      \mbox{has order} \geq k
      \\
      \mbox{is of family} \leq \ell
    \end{array}
    \!\!
  \right\}
  + 
  \sum \left\{
    \modulo{\sigma} \colon \sigma
    \begin{array}{l}
      \mbox{has order} \geq k
      \\
      \mbox{is of family} > \ell
    \end{array}
    \!\!
  \right\}
  \\
  Q^\epsilon_k (t)
  & = &
  \sum \left\{
    \modulo{\sigma \sigma'} \colon
    \sigma, \sigma'
    \begin{array}{l}
      \mbox{ are approaching}
      \\
      \mbox{ one of them has order }\geq k
    \end{array}
  \right\} \,.
\end{eqnarray*}
As in~\cite[Paragraph~7.3]{BressanLectureNotes}, for $k \geq 1$, let
$I_k$ denote the set of those interaction times at which the maximal
order of the interacting waves is $k$. $I_0$ denotes the set
containing $t=0$ and all times at which there is a jump in the
boundary data.  On the other hand, $J_k$ is the set of those
interaction times at which a wave of order $k$ hits the boundary. A
careful examinations of the possible interaction yields the following
table for $k\geq 3$:
\begin{displaymath}
  \begin{array}{rcl@{\qquad}rcl}
    \Delta V^\epsilon_k(t) & = & 0
    &
    t & \in & I_0 \cup I_1 \cup \ldots \cup I_{k-2},
    \\
    \Delta V^\epsilon_k(t) + H_2 \, \Delta Q^\epsilon_{k-1}(t) & \leq & 0
    &
    t & \in & I_{k-1} \cup I_{k} \cup \ldots,
    \\
    \Delta V^\epsilon_k(t) & = & 0
    &
    t & \in & J_1 \cup J_2 \cup \ldots \cup J_{k-1},
    \\
    \Delta V^\epsilon_k (t)
    & \leq & 0
    &
    t & \in & J_{k} \cup J_{k+1} \cup \ldots\,.
  \end{array}
\end{displaymath}
Denote the positive, respectively negative, part of a real number by:
$\piu{x} = \max \{0,x\}$, respectively $\meno{x} =
\piu{-x}$. Therefore, similarly
to~\cite[formula~(7.69)]{BressanLectureNotes}, we get for $k\geq 3$:
\begin{eqnarray*}
  V^\epsilon_k (t) 
  & \leq & 
  \sum_{0<s\leq t} \piu{\Delta V^\epsilon_{k} (s)}
  \\
  & \leq &
  H_2 \sum_{0<s\leq t} \meno{\Delta Q^\epsilon_{k-1} (s)}
  \leq 
  H_2 \sum_{0<s\leq t} \piu{\Delta Q^\epsilon_{k-1} (s)}.
\end{eqnarray*}
Now we need to estimate the last sum: $\widetilde Q^\epsilon_k(t) =
\sum_{0<s\leq t} \piu{\Delta Q^\epsilon_{k} (s)}$. Observe that for $k
\geq 3$:
\begin{displaymath}
  \begin{array}{rcl@{\qquad}rcl}
    \Delta Q^\epsilon_k(t)+\Delta \Upsilon^\epsilon(t)
    \cdot V^\epsilon_k(t-) & \leq & 0
    &
    t & \in & I_0 \cup I_1 \cup \ldots \cup I_{k-2},
    \\
    \Delta Q^\epsilon_k(t) + H_2 \, \Delta Q^\epsilon_{k-1}(t)\cdot
    V^\epsilon(t-) 
    & \leq & 0
    &
    t & \in & I_{k-1},
    \\
    \Delta Q^\epsilon_{k}(t) & \leq & 0
    &
    t & \in & I_{k} \cup I_{k+1} \cup \ldots,
    \\
    \Delta Q^\epsilon_k(t) + \Delta \Upsilon^\epsilon(t)
    \cdot V^\epsilon_k(t-) & \leq & 0
    &
    t & \in & J_1 \cup J_2 \cup \ldots \cup J_{k-1},
    \\
    \Delta Q^\epsilon_k(t)+ \Delta V^\epsilon_k(t)
    \cdot V^\epsilon(t-) & \leq & 0
    &
    t & \in & J_{k} \cup J_{k+1} \cup \ldots\, .
    \\
  \end{array}
\end{displaymath}
Hence we can write
\begin{eqnarray*}
  \widetilde Q_k^\epsilon (t) 
  & \leq &  
  \sum_{0<s\leq t}
  \left[
    \meno{\Delta V^\epsilon_k(s)}
    +
    H_2 \, \meno{\Delta Q^\epsilon_{k-1}(s)}
  \right]
  \sup_{0\leq \tau\leq t} V^\epsilon(\tau) 
  \\
  & & 
  +
  \sum_{0<s\leq t} 
  \meno{\Delta \Upsilon^\epsilon(s)} \cdot
  \sup_{0\leq \tau\leq t} V^\epsilon_k(\tau)
  \\
  & \leq &  
  \delta \sum_{0<s\leq t}
  \left[
    \piu{\Delta V^\epsilon_k(s)}
    +
    H_2 \, \piu{\Delta Q^\epsilon_{k-1}(s)}
  \right]
  +
  \Upsilon^\epsilon(0) \, \sup_{0\leq \tau\leq t} V^\epsilon_k(\tau)
  \\
  & \leq & 
  3\delta H_2 \cdot \widetilde Q^\epsilon_{k-1}(t) \,.
\end{eqnarray*}
By induction we obtain
\begin{displaymath}
  \widetilde Q_k^\epsilon (t)
  \leq 
  (3\delta H_2)^{k-2} \, \widetilde Q_2^\epsilon (t)
  \leq 
  (3\delta H_2)^{k-2} \, \delta \,.
\end{displaymath}
Therefore, if $\delta$ is sufficiently small (so that $3 \delta H_2 <
1$), there exists $N_\epsilon > 0$ such that the total size of the
waves of order greater or equal to $N_\epsilon$ is smaller than
$\epsilon$:
\begin{displaymath}
  V^\epsilon_{k}(t)
  \leq 
  H_2 \, \widetilde Q^\epsilon_k(t)
  \leq 
  H_2 \, (3\delta H_2)^{k-2} \, \delta
  \leq 
  \epsilon 
  \,,\quad
  \hbox{ for } k\geq N_\epsilon \,.
\end{displaymath}
Now we observe that the numbers of wave of a given order, is bounded
by a number that depends on $\epsilon$ but not on the threshold
$\rho$: indeed let $M_\epsilon$ the maximum total number of waves that
can be generated in a solution of a Riemann problem inside the domain
or at the boundary. Let $\overline M$ be the sum of the total number
of jumps in the initial data and in the boundary data.  The wave of
first generation are born at $t=0$, at the jumps in the boundary or
when a wave of first generation hit the boundary. Since the waves of
first generation which can hit the boundary (the ones which belong to
the families $i = 1, \ldots, \ell$) are born only at $t = 0$, the
total number of first generation waves is bounded by a constant
$C_\epsilon^1 = \overline M\cdot M_\epsilon + \overline M\cdot
M_\epsilon\cdot M_\epsilon$ not depending on the threshold. Suppose
now that the number of waves of generation lower or equal to $k$ is
bounded by a constant $C_\epsilon^k$ not depending on $\rho$. The
waves of order $k+1$ can be generated only when two waves of lower
order interact, or when a wave of order $k+1$ hit the boundary. Since
the waves of order $k+1$ which can hit the boundary can only be
generated by interaction of waves of lower order, the total number of
generation $k+1$ waves is bounded by $C_{\epsilon}^{k+1} =
C_\epsilon^k \cdot C_\epsilon^k \cdot M_\epsilon + C_\epsilon^k \cdot
C_\epsilon^k \cdot M_\epsilon \cdot M_\epsilon$ which do not depends
on $\rho$.  Fix now $k$ such that $V_k^\epsilon\leq \epsilon$. Hence
also the total strength of non physical waves with order greater or
equal then $k$ is lower than $\varepsilon$. Then observe that the
total number of non physical waves with order less than $k$ is
obviously bounded by $C_\epsilon^k$. Since the strength of any single
non physical wave is bounded by $C\rho$, if we choose the threshold
$\rho$ such that $C \rho \cdot C_\epsilon^k \leq \epsilon$, we have
that the total strength of non physical waves is bounded by
$2\epsilon$.  Finally we observe that if $\gamma$ is any
$\ell$--non-characteristic curve, then $\tv \left( u \left( \cdot,
  \bar\gamma(\cdot)\right) \right)$ is uniformly bounded by a constant time
$\tv(u_o) + \tv(g)$. Indeed, this property is proved following the
techniques in~\cite[Theorem~14.4.2 and formula~(14.5.19)]{DafermosBook}
with our strictly decreasing functional $\Upsilon^\epsilon$.

The following lemma on the regularity of $u$ along non-characteristic
curves is of use in the sequel.

\begin{lemma}
  \label{lem:curves}
  Fix a positive $T$.  Let $u$ be an $\epsilon$-approximate wave front
  tracking solution to~(\ref{eq:CPHCL}). Let $\Gamma_0, \Gamma_1$ be
  $\ell$--non-characteristic curves. Then, there exists a constant
  $\mathcal{K} > 0$ independent from $T$ such that
  \begin{displaymath}
    \int_0^{T} \!
    \norma{u\left(t, \Gamma_0(t) \right) - u\left(t, \Gamma_1(t) \right)}
    dt
    \leq
    \frac{\mathcal{K}}{c}
    \left(\tv(u_o) + \tv(g) \right) \norma{\Gamma_1 - \Gamma_0}_{\C0([0,T])}
  \end{displaymath}
\end{lemma}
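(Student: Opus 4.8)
The plan is to reduce the estimate to a single key quantity: the integral over $[0,T]$ of the total strength of all waves crossing the strip between $\Gamma_0(t)$ and $\Gamma_1(t)$ at time $t$. Without loss of generality assume $\Gamma_0(t) \le \Gamma_1(t)$ for all $t$ (otherwise split $[0,T]$ into the at most countably many intervals on which the order is fixed, or simply take a min/max of the two curves and use the triangle inequality through the intermediate curve). For a fixed $t$ that is not an interaction time, $u(t,\cdot)$ is piecewise constant, and
\begin{displaymath}
  \norma{u(t,\Gamma_0(t)) - u(t,\Gamma_1(t))}
  \le
  C \sum_{\Gamma_0(t) \le x < \Gamma_1(t)} \sum_{i=1}^{n} \modulo{\sigma_{x,i}}
  =: C\, W(t),
\end{displaymath}
since the difference of the two traces is controlled by the sum of the strengths of the jumps of $u(t,\cdot)$ located strictly between the two curves. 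Integrating, it suffices to bound $\int_0^T W(t)\,dt \le \frac{C}{c}(\tv(u_o)+\tv(g))$.

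To estimate $\int_0^T W(t)\,dt$ I would follow the classical argument bounding the time a wave spends inside a non-characteristic strip, as in~\cite[Theorem~14.4.2]{DafermosBook}. Each wave front is a polygonal line $x = x_\alpha(t)$ of some family $j_\alpha$, travelling with speed $\dot x_\alpha \in [\lambda_{j_\alpha}-C\epsilon, \lambda_{j_\alpha}+C\epsilon]$ (for shocks, the Rankine--Hugoniot speed; nonphysical fronts move with a fixed speed strictly larger than all $\lambda_i$). Because both $\Gamma_0$ and $\Gamma_1$ are $\tilde\ell$--non-characteristic, $\dot\Gamma_k \in [\lambda_{\tilde\ell}+c,\lambda_{\tilde\ell+1}-c]$, so the relative speed of any wave front with respect to either curve is bounded away from zero by a constant multiple of $c$: a front of family $\le \tilde\ell$ crosses $\Gamma_0$ and $\Gamma_1$ from right to left with $\dot\Gamma_k - \dot x_\alpha \ge c/2$ (for $\epsilon$ small), while a front of family $> \tilde\ell$ (including nonphysical ones) crosses from left to right with $\dot x_\alpha - \dot\Gamma_k \ge c/2$. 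Hence the Lebesgue measure of the set of times $t$ at which a given front $\alpha$ lies in the strip $[\Gamma_0(t),\Gamma_1(t)]$ is at most $\frac{2}{c}\norma{\Gamma_1-\Gamma_0}_{\C0([0,T])}$. Therefore
\begin{displaymath}
  \int_0^T W(t)\,dt
  \le
  \frac{2}{c}\,\norma{\Gamma_1-\Gamma_0}_{\C0([0,T])}
  \sum_{\alpha} \sup_t \modulo{\sigma_\alpha(t)},
\end{displaymath}
where the sum is over all wave fronts present in the approximate solution. The remaining point is that $\sum_\alpha \sup_t \modulo{\sigma_\alpha(t)}$ is bounded by a constant times $\tv(u_o)+\tv(g)$, uniformly in $\epsilon$.

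This last bound is the main obstacle, because a single front can change strength across interactions and the number of fronts is only finite (not uniformly so in $\epsilon$). I would handle it exactly as the uniform $\BV$ bound for time-like curves quoted just before the lemma: the strictly decreasing Glimm functional $\Upsilon^\epsilon$ from~(\ref{eq:DefFun}) controls not only the instantaneous total strength but, summing the positive parts of its jumps, also the total amount of wave strength ever created, including reflected waves produced at interactions in the interior and at the boundary. Concretely, assign to each maximal front segment its strength and observe that $\sum_\alpha \sup_t\modulo{\sigma_\alpha}$ is bounded by the initial total strength $V^\epsilon(0)$ plus the sum over all interaction and boundary times of the increase $\piu{\Delta V^\epsilon}$; the interaction estimates displayed above (interior: $\Delta V^\epsilon \le CK\modulo{\sigma'\sigma''}$ with $\Delta\Upsilon^\epsilon \le -\frac{H_2}{4}\modulo{\sigma'\sigma''}$; wave hitting the boundary: $\Delta V^\epsilon \le (C-K)\modulo{\sigma} < 0$; boundary data jump: $\Delta V^\epsilon \le C\modulo{\Delta g^\epsilon}$ with $\Delta V^\epsilon_g = -\modulo{\Delta g^\epsilon}$) give
\begin{displaymath}
  \sum_\alpha \sup_t \modulo{\sigma_\alpha}
  \le
  V^\epsilon(0) + \O \sum_{s} \piu{\Delta\Upsilon^\epsilon(s)} + \O\,\tv(g^\epsilon)
  \le
  C\bigl(\tv(u_o^\epsilon)+\tv(g^\epsilon)\bigr)
  \le
  C\bigl(\tv(u_o)+\tv(g)\bigr),
\end{displaymath}
using $\sum_s \meno{\Delta\Upsilon^\epsilon(s)} \le \Upsilon^\epsilon(0) \le C(\tv(u_o^\epsilon)+\tv(g^\epsilon))$ and~(\ref{eq:approxinitialdata}). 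Combining the two displays yields the claimed inequality with $\mathcal{K}$ independent of $T$, $u_o$, $g$ and $\epsilon$. Finally, since the bound is uniform in $\epsilon$ and $u^\epsilon \to u$ in $\C0([0,T];\L1)$, one passes to the limit; the statement as phrased concerns the $\epsilon$-approximate solution, so this last step is only needed when the lemma is invoked to prove Proposition~\ref{prop:curves}.
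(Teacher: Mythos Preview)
Your route differs from the paper's and contains a real gap at the last step. The paper never sums over all fronts in the solution; instead it fixes one curve $\Gamma$, writes the trace $u(t,\Gamma(t))=\sum_\alpha u_\alpha\chi_{[t_\alpha,t_{\alpha+1})}$ in terms of the crossing times $t_\alpha$ (determined by $\Gamma(t_\alpha)=\lambda_\alpha t_\alpha+x_\alpha$), and computes how those times shift under a small perturbation $\eta$: the non-characteristic condition gives $|t'_\alpha-t_\alpha|\le\|\eta\|_{\C0}/c$, hence
\[
\int_0^T\norma{u(t,\Gamma(t))-u(t,\Gamma(t)+\eta(t))}\,dt
\le\frac{\|\eta\|_{\C0}}{c}\sum_\alpha\norma{u_\alpha-u_{\alpha-1}}
=\frac{\|\eta\|_{\C0}}{c}\,\tv\bigl(u(\cdot,\Gamma(\cdot))\bigr),
\]
and the right-hand side is bounded by $\mathcal{K}(\tv u_o+\tv g)$ via the trace-$\tv$ estimate quoted just before the lemma. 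The extension from small $\eta$ to arbitrary $\Gamma_0,\Gamma_1$ is by interpolation: set $\psi(\theta)=\int_0^T\|u(t,(1-\theta)\Gamma_0+\theta\Gamma_1)-u(t,\Gamma_0)\|\,dt$ and bound its upper Dini derivative by the small-perturbation estimate. The only ``global'' quantity used is $\tv(u(\cdot,\Gamma(\cdot)))$, i.e.\ the sum of strengths over crossings of a \emph{single} curve, which is exactly what $\Upsilon^\epsilon$ controls.

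Your reduction to $\int_0^T W(t)\,dt$ and the strip-time bound $|I_\alpha|\le\frac{2}{c}\|\Gamma_1-\Gamma_0\|_{\C0}$ are fine, but the inequality $\sum_\alpha\sup_t|\sigma_\alpha(t)|\le V^\epsilon(0)+\O\sum_s\piu{\Delta V^\epsilon(s)}+\O\,\tv(g^\epsilon)$ is false. The problem is same-family merges: when two shocks of strengths $\sigma',\sigma''$ coalesce, the surviving front gains $\approx|\sigma''|$, while $\piu{\Delta V^\epsilon}\le C|\sigma'\sigma''|$ there; already a single merge of two shocks of strength $1/2$ gives $\sum_\alpha\sup_t|\sigma_\alpha|=3/2$ but $V^\epsilon(0)+\piu{\Delta V^\epsilon}=1$. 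Iterating, a balanced merge tree of $N$ same-family shocks of strength $\approx 1/N$ (realizable with slightly differing strengths and suitably chosen positions) yields $\sum_\alpha\sup_t|\sigma_\alpha|\ge 1+\tfrac12\log_2 N$ with $\tv u_o\approx 1$, so your constant blows up as $\epsilon\to0$. Note that $\int_0^T W(t)\,dt$ itself \emph{is} uniformly bounded in such examples (total strength in the strip stays $\approx 1$ and the strip is vacated in time $\le 2\|\Gamma_1-\Gamma_0\|/c$); it is only the intermediate passage through $\sum_\alpha\sup_t|\sigma_\alpha|$ that fails. The paper avoids this by summing strengths only over the crossings of one curve, never over all fronts.
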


\begin{proof}
  Let $\Gamma$ be an $\ell$--non-characteristic curve. Consider a
  perturbation $\eta \in \C{0,1} (\reali^+;\reali)$ with
  $\norma{\eta}_{\C0} + \norma{\dot\eta}_{\L\infty}$ sufficiently
  small. By the above construction of $\epsilon$-solutions, there
  exist times $t_\alpha$ and states $u_\alpha$ such that
  \begin{equation}
    \label{eq:sum}
    u \left( t, \Gamma(t) \right) 
    =
    \sum_\alpha u_\alpha \, \chi_{\left[t_\alpha,
        t_{\alpha+1}\right[}(t)
    \quad \mbox{ and } \quad
    \Gamma(t_\alpha) =
    \lambda_\alpha t_\alpha + x_\alpha
  \end{equation}
  Indeed, here $x = \lambda_\alpha t + x_\alpha$ is the equation of a
  discontinuity line in $u$ crossed by $\Gamma$. If $(t_\alpha,
  x_\alpha)$ is a point of interaction in $u$, then we convene that
  all states attained by $u$ in a neighborhood of $(t_\alpha,
  x_\alpha)$ appear in the sum in~(\ref{eq:sum}), possibly multiplied
  by the characteristic function of the empty interval.

  If $\norma{\eta}_{\C1}$ is sufficiently small, then there exists
  times $t'_\alpha$ such that
  \begin{displaymath}
    u \left( t, \Gamma(t)+\eta(t) \right)
    =
    \sum_\alpha u_\alpha \chi_{\left[t_\alpha', t_{\alpha+1}'\right[}(t)
    \quad\mbox{ and }\quad
    \Gamma(t_\alpha') +\eta(t'_\alpha) = \lambda_\alpha t'_\alpha + x_\alpha \,.
  \end{displaymath}
  Subtracting term by term, we obtain
  \begin{eqnarray*}
    \lambda_\alpha\, (t'_\alpha - t_\alpha)
    & = &
    \left( \Gamma(t'_\alpha) - \Gamma(t_\alpha) \right)
    +
    \eta(t'_\alpha) 
    \\
    & = &
    \int_0^1 
    \dot\Gamma \left( \theta t'_\alpha + (1-\theta) t_\alpha\right)
    \, d\theta \,
    (t'_\alpha - t_\alpha)
    +
    \eta(t'_\alpha) \,.
    \\
    \modulo{t'_\alpha - t_\alpha}
    & = &
    \modulo{\frac{\eta(t'_\alpha)}{\lambda_\alpha - \int_0^1 
        \dot\Gamma \left( \theta t'_\alpha + (1-\theta) t_\alpha\right)
        \, d\theta}}
    \\
    & \leq &
    \frac{\norma{\eta}_{\C0}}{c} \,.
  \end{eqnarray*}
  Therefore,
  \begin{eqnarray}
    \nonumber
    & &
    \int_0^{T}
    \norma{
      u\left(t, \Gamma(t) \right) - 
      u\left(t, \Gamma(t)+\eta(t)\right)
    }
    \, dt
    \\
    \nonumber
    & = &
    \sum_\alpha 
    \norma{u_\alpha - u_{\alpha-1}} \, 
    \modulo{t'_\alpha - t_\alpha}
    \ \leq \ 
    \frac{\norma{\eta}_{\C0}}{c}
    \sum_\alpha 
    \norma{u_\alpha - u_{\alpha-1}}
    \\
    & \leq &
    \frac{\norma{\eta}_{\C0}}{c} 
    \tv \left( u(\cdot, \Gamma(\cdot)\right)
    \label{eq:yes}
    \ \leq \ 
    \mathcal{K}
    \frac{\norma{\eta}_{\C0}}{c} 
    \left( \tv(u_o) + \tv(g) \right)
  \end{eqnarray}
  proving Lipschitz continuity for $\eta$ small. We pass to the
  general case through an interpolation argument. Introduce the map
  \begin{displaymath}
    \psi(\theta) 
    =
    \int_0^{T}
    \norma{ 
      u \left(t, (1-\theta)\Gamma_0(t) + \theta\Gamma_1(t) \right) -
      u \left(t, \Gamma_0(t) \right)
    } \, dt \,.
  \end{displaymath}
  The estimates above prove that the map $\theta \to u\left(\cdot,
    (1-\theta)\Gamma_0(\cdot) + \theta\Gamma_1(\cdot) \right)$ is
  continuous in $\L1$, hence also $\psi$ is continuous and
  by~(\ref{eq:yes}) its upper right Dini derivative satisfies
  \begin{displaymath}
    D^+ \psi(\theta) 
    \leq 
    \mathcal{K}
    \frac{\tv(u_o) + \tv(g)}{c} \norma{\Gamma_1-\Gamma_o}_{\C0}
  \end{displaymath}
  for all $\theta \in [0.1]$. Hence, by the theory of differential
  inequalities,
  \begin{eqnarray*}
    \int_0^{T}
    \norma{u\left(t, \Gamma_1(t) \right) - u\left(t, \Gamma_0(t) \right)}
    dt
    & = &
    \psi(1) - \psi(0)
    \\
    & \leq &
    \mathcal{K}
    \frac{\tv(u_o) + \tv(g)}{c} \norma{\Gamma_1-\Gamma_o}_{\C0([0,T])}
  \end{eqnarray*}
  completing the proof.
\end{proof}

We want now to compare different solutions.  Take two
$\epsilon$--approximate solutions $u$, $v$ corresponding to the two
initial data $u_o$, $v_o$ and the two boundary data $g$ and $\bar
g$. Let $\omega$ be a piecewise constant function with the following
properties: $\omega(t,\cdot)$ is an $\L1$--function with small total
variation, $\omega(t,x)$ has finitely many polygonal lines of
discontinuity and the slope of any discontinuity line is bounded in
absolute value by $\widehat \lambda$.  The function $\omega$ does not
need to have any relation with the conservation law.

Define the functions $w = v + \omega$ and $\boldsymbol{q} \equiv(q_1,
\ldots, q_n)$ implicitly by
\begin{displaymath}
  w(t,x)
  =
  \boldsymbol{S} \left( \boldsymbol{q} (t,x) \right) \left( u(t,x) \right)
\end{displaymath}
with $\boldsymbol{S}$ as in~(\ref{eq:S}). We now consider the
functional
\begin{equation}
  \label{eq:Phi}
  \begin{array}{rcl}
    \displaystyle
    \Phi(u,w)(t)
    & = &
    \displaystyle
    \bar K \sum_{i=1}^\ell \int_{\gamma(t)}^{+\infty} \modulo{q_i(t,x)} \,
    W_i(t,x)\, dx  
    \\
    & &
    \displaystyle
    \quad
    +
    \sum_{i=\ell+1}^n \int_{\gamma(t)}^{+\infty} \modulo{q_i(t,x)} \, W_i(t,x)\, dx
  \end{array}
\end{equation}
where $\bar K$ is a constant to be defined later and the weights $W_i$
are defined setting:
\begin{displaymath}
  W_i(t,x)
  =
  1 
  + 
  \kappa_1 A_i(t,x) 
  + 
  \kappa_2 \left( 
    \Upsilon^\epsilon \left(u(t)\right) + 
    \Upsilon^\epsilon \left(v(t)\right) 
  \right) \,.
\end{displaymath}
The functions $A_i$ are defined as follows. Denote by
$\sigma_{x,\kappa}$ the size of a jump (in $u$ or $v$) located at $x$
of the family $\kappa$ ($\kappa=n+1$ for non physical waves). Recall
that $J(u)$, respectively $J(v)$ denote the sets of all jumps in $u$,
respectively in $v$, for $x > \gamma(t)$, while $\bar J(u)$, $\bar
J(v)$ are the sets of the physical jumps only.

If the $i$-th characteristic field is linearly degenerate, we simply
define
\begin{displaymath}
  A_i(x)
  \doteq  
  \sum \left\{
    \modulo{\sigma_{y,\kappa}}
    \colon
    y\in \bar J(u) \cup \bar J(v)
    \mbox{ and }
    \begin{array}{l}
      y<x, \, i < \kappa \leq n, \mbox{ or}
      \\
      y>x, \, 1 \leq \kappa < i
    \end{array}
  \right\}
\end{displaymath}
On the other hand, if the $i$-th field is genuinely nonlinear, the
definition of $A_i$ will contain an additional term, accounting for
waves in $u$ and in $v$ of the same $i$-th family:
\begin{equation}
  \label{eq:Simple}
  \!\!\!  \!\!\!
  \begin{array}{rcl}
    A_i(x)
    & \doteq & 
    \sum
    \left\{
      \modulo{\sigma_{y,\kappa}}
      \colon
      y\in \bar J(u) \cup \bar J(v)
      \mbox{ and }
      \begin{array}{l}
        y<x, \, i < \kappa \leq n, \mbox{ or}\!
        \\
        y>x, \, 1 \leq \kappa < i
      \end{array}\!\!
    \right\}
    \\
    & &
    + 
    \left\{
      \begin{array}{ll} 
        \displaystyle
        \sum
        \left\{
          \modulo{\sigma_{y,i}}
          \colon
          \begin{array}{l}
            y\in \bar J(u), \, y<x  \mbox{ or}
            \\ 
            y\in \bar J(v), \, y>x
          \end{array}
        \right\}
        \quad &\hbox{if } q_i(x) < 0, 
        \\
        \displaystyle
        \sum
        \left\{
          \modulo{\sigma_{y,i}}
          \colon
          \begin{array}{l}
            y\in \bar J(v), \, y<x  \mbox{ or}
            \\ 
            y\in \bar J(u), \, y>x
          \end{array}
        \right\}
        \quad &\hbox{if } q_i(x) \geq 0.
      \end{array}\!\!
    \right.
  \end{array}
\!\!\!
\end{equation}
Recall that non-physical fronts play no role in the definition of
$A_i$. We remark that the function $\omega$ enters the definition of
$A_i$ only indirectly by influencing the sign of the scalar functions
$q_i$. The constants $\kappa_1$, $\kappa_2$ are the same defined
in~\cite{BressanLectureNotes}. We also recall that, since $\delta_o$
is chosen small enough, the weights satisfy $ 1\leq W_i(t,x) \leq 2 $,
hence for a suitable constant $C_3 > 1$,
\begin{equation}
  \label{eq:equiv}
  \frac{1}{C_3} \, \norma{w(t)-u(t)}_{\L1} 
  \leq  
  \Phi(u,w)(t) 
  \leq 
  C_3 \, \norma{w(t)-u(t)}_{\L1},
\end{equation}
where the $\L1$ norm is taken in the interval $\left] \gamma(t),
  +\infty \right[$.
 
We state now the following theorem.

\begin{proposition}
  \label{prop:theorem44}
  Let the system~(\ref{eq:CPHCL}) satisfy the assumptions of
  Theorem~\ref{thm:SRS}. Then, there exists a constant $\delta \in
  \left]0, \delta_o\right[$ such that, let $u$, $v$, $\omega$, $w$ be
  the functions previously defined, satisfying
  $\Upsilon^\epsilon \left(u(t) \right)$, $\Upsilon^\epsilon\left(v(t)\right)$,
  $\Upsilon^\epsilon\left(\omega(t)\right)$,
  $\Upsilon^\epsilon\left(w(t)\right) \leq \delta$, for any $t\geq 0$,
  then one has
  \begin{eqnarray*}
    \Phi (u, w)(t_2)
    & \leq &
    \displaystyle
    \Phi (u, w)(t_1)
    +    
    C  \epsilon  (t_2 - t_1)
    \\
    & &
    \displaystyle
    + 
    C \int_{t_1}^{t_2}
    \left( 
      \norma{
        b\left(u \left(s, \gamma(s) \right) \right) - 
        b\left(v \left(s, \gamma(s) \right) \right)
      } 
      + 
      \tv\left(\omega(s,\cdot)\right)
    \right) 
    ds .
  \end{eqnarray*}
\end{proposition}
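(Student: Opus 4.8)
\smallskip
\noindent\textbf{Plan of the proof.} The plan is to adapt the classical $\L1$-stability estimate for wave front tracking approximations, see~\cite{BressanLectureNotes}, to the presence of the non characteristic boundary; the genuinely new ingredient is the interaction estimate of Lemma~\ref{lem:barK}. Fix $t_1<t_2$. Outside a finite set of exceptional times — the interaction times in $\{x>\gamma(t)\}$, the times at which a front hits $x=\gamma(t)$, and the jump times of $g^\epsilon$ — the map $t\mapsto\Phi(u,w)(t)$ is Lipschitz; hence it is enough to bound its derivative at the non-exceptional times and its (hopefully nonpositive) jumps at the exceptional ones, and then integrate and sum.

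First I would treat the derivative. Write $\Phi(u,w)(t)=\sum_{i=1}^n c_i\int_{\gamma(t)}^{+\infty}\modulo{q_i(t,x)}\,W_i(t,x)\,dx$ with $c_i=\bar K$ for $i\leq\ell$ and $c_i=1$ for $i\geq\ell+1$. Differentiating in $t$ produces the usual interior term plus the endpoint contribution $-\dot\gamma(t)\sum_{i}c_i\,\modulo{q_i(t,\gamma(t)+)}\,W_i(t,\gamma(t)+)$ coming from the moving lower limit of integration. The interior term is treated exactly as in the no-boundary case: each $\modulo{q_i}$ is transported with an approximate speed $\lambda_i^\ast$, and the weights $W_i$ — whose decrease is driven by the approaching-wave terms $A_i$ and by the nonincreasing quantity $\Upsilon^\epsilon(u)+\Upsilon^\epsilon(v)$ — are tuned, with $\kappa_1,\kappa_2$ as in~\cite{BressanLectureNotes}, so as to dominate all the interior interaction sources up to the standard $C\epsilon$ error (rarefaction wavelets of size $\leq C\epsilon$ and non physical waves of total strength $\leq C\epsilon$, together with $\norma{g^\epsilon-g}_{\L\infty}<\epsilon$ at the boundary) and up to a term $\leq C\,\tv(\omega(t,\cdot))$ produced by the finitely many discontinuity lines of $\omega$, whose speeds are bounded by $\widehat\lambda$ and which bear no relation to the equation. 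Integrating the transport part over $x>\gamma(t)$ produces, besides those interior sources, the boundary flux $\sum_{i}c_i\,\lambda_i^\ast(t,\gamma(t)+)\,\modulo{q_i(t,\gamma(t)+)}\,W_i(t,\gamma(t)+)$, which combined with the endpoint term leaves the net boundary contribution
\begin{displaymath}
  \sum_{i=1}^n c_i\,\bigl(\lambda_i^\ast(t,\gamma(t)+)-\dot\gamma(t)\bigr)\,\modulo{q_i(t,\gamma(t)+)}\,W_i(t,\gamma(t)+).
\end{displaymath}

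This is the crux, and the hard part. By assumption~($\boldsymbol{\gamma}$) one has $\dot\gamma(t)-\lambda_i^\ast\geq c$ for $i\leq\ell$ and $\lambda_i^\ast-\dot\gamma(t)\geq c$ for $i\geq\ell+1$, while $1\leq W_i\leq 2$; hence the families $i\leq\ell$ contribute a favorable term $\leq-c\,\bar K\sum_{i=1}^\ell\modulo{q_i(t,\gamma(t)+)}$ and the families $i\geq\ell+1$ an unfavorable one $\leq C\sum_{i=\ell+1}^n\modulo{q_i(t,\gamma(t)+)}$. Since $b(u(t,\gamma(t)+))=g^\epsilon(t)$, $b(v(t,\gamma(t)+))=\bar g^\epsilon(t)$ and $w(t,\gamma(t)+)=\boldsymbol{S}\bigl(\boldsymbol{q}(t,\gamma(t)+)\bigr)\bigl(u(t,\gamma(t)+)\bigr)$ with $w=v+\omega$, the shock-curve part of Lemma~\ref{lem:barK} gives
\begin{displaymath}
  \sum_{i=\ell+1}^n\modulo{q_i(t,\gamma(t)+)}
  \leq
  C\Bigl(\sum_{i=1}^\ell\modulo{q_i(t,\gamma(t)+)}+\norma{g^\epsilon(t)-\bar g^\epsilon(t)}+\norma{\omega(t,\gamma(t)+)}\Bigr).
\end{displaymath}
Choosing $\bar K$ large — of the order of $C/c$, which is precisely the origin of the announced blow-up of the Lipschitz constant as $c\to0$ — the term $C\sum_{i\leq\ell}\modulo{q_i(t,\gamma(t)+)}$ is absorbed by the favorable one, leaving only $C\bigl(\norma{g^\epsilon(t)-\bar g^\epsilon(t)}+\norma{\omega(t,\gamma(t)+)}\bigr)$. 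The first summand equals $\norma{b(u(t,\gamma(t)+))-b(v(t,\gamma(t)+))}$ up to a further $C\epsilon$ coming from the $\L\infty$ approximation of $g,\bar g$; the second is $\leq\tv(\omega(t,\cdot))$ because a function in $\BV\cap\L1$ vanishes at $+\infty$. Altogether, at every non-exceptional time,
\begin{displaymath}
  \frac{d}{dt}\Phi(u,w)(t)\leq C\epsilon+C\Bigl(\norma{b(u(t,\gamma(t)+))-b(v(t,\gamma(t)+))}+\tv(\omega(t,\cdot))\Bigr).
\end{displaymath}

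It remains to control the jumps of $\Phi$ at the exceptional times. At an interaction inside $\{x>\gamma(t)\}$ the jump is $\leq0$ once $\kappa_2$ is chosen large, by the verbatim computation of~\cite{BressanLectureNotes}, the boundary and $\omega$ playing no role there; at a time when a front hits $x=\gamma(t)$ or when $g^\epsilon$ jumps, the outgoing reflected waves of the families $\ell+1,\dots,n$ are controlled through Lemma~\ref{lem:barK}, exactly as in the monotonicity estimates for $\Upsilon^\epsilon$ recalled before the statement, so the corresponding jump of $\Phi$ is again $\leq0$ — the possible increase of the other weights $W_j$ being of order $\delta$ times the strength of the outgoing waves, hence absorbed for $\delta$ small. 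Summing these nonpositive jumps and integrating the derivative bound over $[t_1,t_2]$ yields the assertion. The main obstacle is exactly the boundary estimate of the previous paragraph: converting the a priori unfavorable flux of the families $i\geq\ell+1$ at $x=\gamma(t)$ into a controlled quantity, which is where the non characteristic gap $c$, the uniform weight bounds $1\leq W_i\leq2$, the large constant $\bar K$, and the shock version of Lemma~\ref{lem:barK} must all be combined.
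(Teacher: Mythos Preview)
Your proposal is correct and follows essentially the same route as the paper: differentiate $\Phi$ away from interaction times, bound the interior contribution by the standard Bressan--Liu--Yang computation (yielding the $C\epsilon$ and $C\,\tv(\omega)$ terms), and control the boundary flux by combining the non-characteristic gap $c$, the shock-curve estimate of Lemma~\ref{lem:barK}, and the choice $\bar K>C/c$; jumps at interaction times are then absorbed by the decrease of $\Upsilon^\epsilon$ in the weights. One minor remark: since by construction the $\epsilon$-approximate solutions satisfy $b\bigl(u(t,\gamma(t)+)\bigr)=g^\epsilon(t)$ and $b\bigl(v(t,\gamma(t)+)\bigr)=\bar g^\epsilon(t)$ exactly, the extra $C\epsilon$ you invoke when passing from $\norma{g^\epsilon-\bar g^\epsilon}$ to $\norma{b(u)-b(v)}$ is unnecessary.
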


\noindent An immediate consequence of the above result that is useful
below is
\begin{equation}
  \label{eq:tosta}
  \begin{array}{rcl}
    \displaystyle
    \Phi (u, w)(t_2)
    & \leq &
    \displaystyle
    \Phi (u, w)(t_1)
    +    
    C  \epsilon  (t_2 - t_1)
    \\
    & &
    \displaystyle
    + 
    C \int_{t_1}^{t_2}
    \left( 
      \norma{g(s) - \bar g(s)} + \tv\left(\omega(s,\cdot)\right)
    \right) 
    ds .
  \end{array}
\end{equation}

\begin{proofof}{Proposition~\ref{prop:theorem44}}
  In this proof we use the main results obtained
  in~\cite{AmadoriGuerra2002, BressanLectureNotes}.  At each $x$
  define the intermediate states $U_0(x)= u(x)$, $U_1(x)$, $\ldots$,
  $U_n(x) = w(x)$ by setting
  \begin{equation*}
    U_i(x)\doteq 
    S_i \left(q_i(x)\right)
    \circ
    S_{i-1} \left(q_{i-1}(x)\right)
    \circ \cdots \circ
    S_1 \left(q_1(x)\right) \left(u(x)\right) \,.
  \end{equation*}
  Moreover, call
  \begin{displaymath}
    \lambda_i(x) \doteq \lambda_i \left( U_{i-1}(x), U_i(x) \right)
  \end{displaymath}
  the speed of the $i$-shock connecting $U_{i-1}(x)$ with $U_i(x)$.
  For notational convenience, we write $q_i^{y+} \doteq q_i(y+)$,
  $q_i^{y-} \doteq q_i(y-)$ and similarly for $W_i^{y\pm}$,
  $\lambda_i^{y\pm}$.  If $y,\tilde y$ are two consecutive points in
  $J = J(u) \cup J(v) \cup J(\omega)$, then $q_i^{y+} = q_i^{\tilde
    y-}$, $W_i^{y+} = W_i^{\tilde y-}$, $\lambda_i^{y+} =
  \lambda_i^{\tilde y-}$. Therefore, similarly
  to~\cite{BressanLectureNotes, DonadelloMarson}, outside the
  interaction times we can compute:
  \begin{eqnarray*}
    {\frac d {dt}} \Phi (u,w)(t) 
    & = & 
    \bar K \sum_{y\in J}\sum_{i=1}^\ell 
    \left(
      W_i^{y+} \modulo{q_i^{y+}} (\lambda_i^{y+} - \dot x_y) -
      W_i^{y-} \modulo{q_i^{y-}} (\lambda_i^{y-} - \dot x_y)
    \right)
    \\
    & & 
    +
    \sum_{y\in J} \sum_{i=\ell+1}^n \!
    \left(
      W_i^{y+} \modulo{q_i^{y+}} (\lambda_i^{y+} - \dot x_y) -
      W_i^{y-} \modulo{q_i^{y-}} (\lambda_i^{y-} - \dot x_y)
    \right)
    \\
    & & 
    +
    \bar K \sum_{i=1}^\ell
    W_i^{\gamma+} \modulo{q_i^{\gamma+}} (\lambda_i^{\gamma+} - \dot \gamma)
    + 
    \sum_{i=\ell +1}^n
    W_i^{\gamma+} \modulo{q_i^{\gamma+}} (\lambda_i^{\gamma+} - \dot \gamma)
  \end{eqnarray*}
  where $\dot x_y$ is the velocity of the discontinuity at the point
  $y$.  This is because the quantities $q_i$ vanish outside a compact
  set.  For each jump point $y\in J$ and every $i=1,\ldots,n$, define
  \begin{eqnarray*}
    \bar q_i^{y \pm}
    & = &
    \left\{
      \begin{array}{l@{\qquad\mbox{if }}rcl}
        \bar K q_i^{y \pm} & i & \leq & \ell
        \\
        q_i^{y \pm} & i & \geq & \ell + 1
      \end{array}
    \right.
    \\
    E_{y,i}
    & = &
    W_i^{y+} \modulo{\bar q_i^{y+}} (\lambda_i^{y+} - \dot x_y) - 
    W_i^{y-} \modulo{\bar q_i^{y-}} (y_i^{y-} - \dot x_y) \,.
  \end{eqnarray*}
  so that
  \begin{eqnarray*}
    {\frac d  {dt}} \Phi (u, w)(t)
    & = &
    \sum_{y\in J} \sum_{i=1}^n E_{y,i}
    \\
    & &
    +
    \bar K \sum_{i=1}^\ell
    W_i^{\gamma+} \modulo{q_i^{\gamma+}} (\lambda_i^{\gamma+} - \dot \gamma)
    + 
    \sum_{i=\ell +1}^n
    W_i^{\gamma+} \modulo{q_i^{\gamma+}} (\lambda_i^{\gamma+} - \dot \gamma) \,.
  \end{eqnarray*}
  Note that $\bar q^{y\pm}_i$ is a reparametrization of the shock
  curve equivalent to that provided by $q^{y\pm}_i$ and that satisfies
  the key property, see~\cite[Remark~5.4]{BressanLectureNotes},
  \begin{displaymath}
    \left(S_i(\bar q_i) \circ S_i (-\bar q_i) \right) (u) = u \,.
  \end{displaymath}
  Therefore, the computations in~\cite[Section~4]{AmadoriGuerra2002}
  and~\cite[Chapter~8]{BressanLectureNotes} apply. As
  in~\cite[formula~(4.13)]{AmadoriGuerra2002} we thus obtain
  \begin{displaymath}
    \sum_{y\in J} \sum_{i=1}^n E_{y,i} 
    \leq 
    C \cdot \left(\epsilon + \tv(\omega) \right) \,.
  \end{displaymath}
  Concerning the term on the boundary,
  \textbf{($\boldsymbol{\gamma}$)} implies that if $i \leq \ell$, then
  $\lambda_i^{\gamma+} - \dot \gamma \leq -c$. Moreover,
  $W^{\gamma+}_i \geq 1$. Hence, if
  $g^\epsilon=b\left(u(t,\gamma(t)+\right)$, $\bar
  g^\epsilon=b\left(v(t,\gamma(t)+\right)$, Lemma~\ref{lem:barK}
  implies
  \begin{eqnarray*}
    & &
    \bar K \sum_{i=1}^\ell
    W_i^{\gamma+} \modulo{q_i^{\gamma+}} (\lambda_i^{\gamma+} - \dot \gamma)
    + 
    \sum_{i=\ell +1}^n
    W_i^{\gamma+} \modulo{q_i^{\gamma+}} (\lambda_i^{\gamma+} - \dot \gamma)
    \\
    & \leq &
    -c \bar K \sum_{i=1}^\ell \modulo{q_i^{\gamma+}}
    + 
    C \sum_{i=\ell+1}^n \modulo{q_i^{\gamma+}}
    \\
    & \leq &
    -c \bar K \sum_{i=1}^\ell \modulo{q_i^{\gamma+}}
    + 
    C \sum_{i=1}^\ell \modulo{q_i^{\gamma+}}
    +
    C \left( \norma{g^\epsilon - \bar g^\epsilon} + \norma{\omega^{\gamma+}} \right)
    \\
    & \leq & 
    C \left( 
      \norma{g^\epsilon - \bar g^\epsilon} + \norma{\omega^{\gamma+}}
    \right)
  \end{eqnarray*}
  provided
  \begin{equation}
    \label{eq:Csuc}
    \bar K > C/c
  \end{equation}
  is sufficiently large. Therefore, reinserting the $t$ variable, we
  obtain
  \begin{eqnarray*}
    \frac{d}{dt}\Phi(u,w)(t)
    & \leq & 
    C \left(
      \epsilon +
      \tv \left( \omega(t,\cdot) \right)
      +
      \norma{\omega(t,\gamma(t)+)}
      +
      \norma{g^\epsilon(t)-\bar g^\epsilon(t)}
    \right)
    \\
    & \leq &
    C \left(
      \epsilon 
      +
      \tv \left( \omega(t,\cdot) \right)
      + 
      \norma{
        b\left(u \left(s, \gamma(s) \right) \right) - 
        b\left(v \left(s, \gamma(s) \right) \right)
      }
    \right) \,.
  \end{eqnarray*}
  Then, standard computations
  (see~\cite[Theorem~8.2]{BressanLectureNotes}) show that when an
  interaction occurs, the possible increase in $A_i(x)$ is compensated
  by a decrease in $\Upsilon^\epsilon$. Therefore, the functional
  $\Phi$ is not increasing at interaction times. Hence, integrating
  the previous inequality, we obtain~(\ref{eq:tosta}).
\end{proofof}

\begin{proposition}
  \label{prop:FirstPart}
  Let system~(\ref{eq:CPHCL}) satisfy the assumptions of
  Theorem~\ref{thm:SRS}. Then, there exists a process $P$
  satisfying~\emph{\ref{it:semigroup})} in Theorem~\ref{thm:SRS},
  \emph{3.}~in Definition~\ref{def:SolConv} and moreover, there exists
  a positive $L$ such that for all $u,v,\omega$,
  \begin{equation}
    \label{eq:PartLip}
    \begin{array}{rcl}
      & &
      \displaystyle
      \norma{P(t,t_o)u - \bar P(t',t_o')v -
        \omega}_{\L1}
      \\
      & \leq &
      \displaystyle
      L \cdot \bigg\{\norma{u - v - \omega}_{\L1} 
      + 
      \modulo{t-t'}
      +
      \modulo{t_o-t_o'}
      \\
      & &\qquad
      \displaystyle
      +
      \int_{t_o}^{t_o+t} 
      \norma{g(\tau) - \bar g(\tau)}
      d\tau
      +
      t\cdot\tv \left(\omega\right)\bigg\} \,.
    \end{array}
  \end{equation}
\end{proposition}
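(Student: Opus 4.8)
The plan is to obtain the process $P$ as the wave front tracking limit constructed above and to read off the Lipschitz estimate \eqref{eq:PartLip} from the functional estimate of Proposition~\ref{prop:theorem44}. First I would fix the initial time $t_o$ and a datum $u$ (piecewise constant with small total variation, vanishing to the left of $\gamma(t_o)$) and run the $\epsilon$-wave front tracking algorithm described above with initial datum $u$ at time $t_o$, boundary curve $\gamma$ and boundary data $g$ restricted to $[t_o,+\infty[$. The uniform bound on the strictly decreasing Glimm functional $\Upsilon^\epsilon$ gives a uniform bound on the total variation in $x$, while the finite propagation speed gives uniform Lipschitz continuity in $t$ in $\L1$; a standard compactness argument then yields, along a subsequence, a limit $u^\epsilon\to P(t,t_o)u$ in $\C0([0,T];\L1)$. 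That the limit is a weak entropy solution of $\partial_t u+\partial_x f(u)=0$ for $x>\gamma(t)$ — item~3.\ of Definition~\ref{def:SolConv} — follows from the classical convergence theory for wave front tracking. The two-parameter semigroup property~\ref{it:semigroup}) is then a consequence of the fact that restarting the algorithm at an intermediate time from (a piecewise constant approximation of) the current profile produces the same limit; this, together with the independence of the limit from the subsequence, will follow from the Lipschitz estimate below, so that $P$ is well defined on the closure in $\L1$ of the piecewise constant functions with small total variation that vanish to the left of $\gamma(t)$.

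To prove \eqref{eq:PartLip} I would first treat the case $\omega\equiv0$, $t=t'$, $t_o=t_o'$, with $\bar P$ the process associated with the same boundary curve $\gamma$ but with boundary data $\bar g$. Given two $\epsilon$-approximate solutions $u^\epsilon$, $v^\epsilon$ emanating from $u$, $v$ with boundary data $g$, $\bar g$ (both viewed, if the two approximation parameters differ, as $\max$-approximate solutions), inequality \eqref{eq:tosta} with $\omega\equiv0$ together with the equivalence \eqref{eq:equiv} between $\Phi$ and the $\L1$ distance gives
\begin{displaymath}
  \norma{u^\epsilon(t_o+t)-v^\epsilon(t_o+t)}_{\L1}
  \leq
  C_3^2\,\norma{u-v}_{\L1}
  +C_3\,C\,\epsilon\,t
  +C_3\,C\int_{t_o}^{t_o+t}\norma{g(\tau)-\bar g(\tau)}\,d\tau
  +C\,\epsilon .
\end{displaymath}
Letting $\epsilon\to0$ yields \eqref{eq:PartLip} in this case; taking $u=v$, $g=\bar g$ shows that $\{u^\epsilon(t)\}$ is Cauchy, so the limit $P$ does not depend on the subsequence and satisfies~\ref{it:semigroup}).

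Next I would incorporate the perturbation $\omega$. Since Proposition~\ref{prop:theorem44} requires $\omega$ to be piecewise constant with polygonal discontinuity lines of bounded slope, I cannot feed in the static $\omega$ directly: I would instead approximate $\omega$ in $\L1$ by a piecewise constant $\omega_o^\epsilon$ with $\tv(\omega_o^\epsilon)\leq\tv(\omega)+\epsilon$ (small enough that the hypotheses of Proposition~\ref{prop:theorem44} on $\Upsilon^\epsilon(\omega)$, $\Upsilon^\epsilon(w)$ hold) and let it evolve by rigid translation at the fixed speed $\widehat\lambda=\max_i\norma{\lambda_i}_{\L\infty}$, obtaining $\omega^\epsilon(t,\cdot)$ whose discontinuity lines have slope $\widehat\lambda$ and whose total variation is constant in $t$. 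Applying \eqref{eq:tosta} with this $\omega^\epsilon$ and with $w^\epsilon=v^\epsilon+\omega^\epsilon$, and using \eqref{eq:equiv},
\begin{displaymath}
  \norma{u^\epsilon(t_o+t)-v^\epsilon(t_o+t)-\omega^\epsilon(t_o+t,\cdot)}_{\L1}
  \leq
  C_3^2\norma{u-v-\omega}_{\L1}
  +C_3\,C\Big(\epsilon\,t+\!\int_{t_o}^{t_o+t}\!\norma{g-\bar g}\,d\tau+t\,\tv(\omega)\Big)+C\,\epsilon .
\end{displaymath}
Since $\omega^\epsilon(t_o+t,\cdot)$ is the translate of $\omega_o^\epsilon$ by $\widehat\lambda\,t$, one has $\norma{\omega^\epsilon(t_o+t,\cdot)-\omega}_{\L1}\leq\widehat\lambda\,t\,\tv(\omega_o^\epsilon)+\norma{\omega_o^\epsilon-\omega}_{\L1}\leq C\,t\,\tv(\omega)+C\,\epsilon$; adding this to the previous line, passing to the wave front tracking limit, and setting $L$ equal to the largest of the constants produced, gives \eqref{eq:PartLip} when $t=t'$, $t_o=t_o'$.

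Finally, the dependence on $t\neq t'$ and $t_o\neq t_o'$ is reduced to this case via the semigroup property~\ref{it:semigroup}) and the uniform Lipschitz continuity of $s\mapsto P(s,t_o)u$ in $\L1$ (the relevant constant is controlled because the total variation stays bounded by $\delta$ on the domains and the characteristic speeds are bounded): writing, say for $t_o\leq t_o'$ and $t\geq t_o'-t_o$, $P(t,t_o)u=P\big(t-(t_o'-t_o),t_o'\big)\circ P(t_o'-t_o,t_o)u$ and estimating $\norma{P(t_o'-t_o,t_o)u-u}_{\L1}\leq C\modulo{t_o-t_o'}$, the general inequality follows by the triangle inequality, the already established case, and absorption of the extra terms into $L$; the remaining sub-case $t<t_o'-t_o$ is handled symmetrically. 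I expect the main obstacle to be precisely the bookkeeping in the step that introduces $\omega$: one must choose the evolving approximation $\omega^\epsilon$ so that it simultaneously satisfies the structural hypotheses of Proposition~\ref{prop:theorem44}, keeps $\tv(\omega^\epsilon(t,\cdot))$ controlled for all $t$, and differs from the static $\omega$ at the final time only by a genuinely second–order quantity of size $C\,t\,\tv(\omega)$, so that no spurious first–order term in $\norma{\omega}_{\L1}$ is produced. All the genuinely nonlinear work — the interaction estimates at the boundary of Lemma~\ref{lem:barK} and the monotonicity of $\Phi$ in Proposition~\ref{prop:theorem44} — is already done, so what remains is the limit passage and this careful choice of $\omega^\epsilon$.
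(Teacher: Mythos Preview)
Your approach is essentially the paper's: build $P$ as the wave front tracking limit and read off~\eqref{eq:PartLip} from~\eqref{eq:tosta} and~\eqref{eq:equiv}, first for $\omega=0$ and equal times (which also gives the Cauchy property and the process identity), then for general $\omega$, and finally handle $t\neq t'$, $t_o\neq t_o'$ via the process property exactly as you do. Two minor points where the paper is leaner: (i) it obtains convergence directly as a Cauchy sequence by comparing an $\epsilon'$- and an $\epsilon$-approximation with $\epsilon'\leq\epsilon$ (your parenthetical ``$\max$-approximate'' remark is precisely this), bypassing the compactness/subsequence detour; (ii) your worry that a static $\omega$ is inadmissible in Proposition~\ref{prop:theorem44} is unfounded---a piecewise constant $\omega(x)$ has discontinuity lines $x=x_j$ with velocity $\dot x_y=0$, which is trivially bounded by $\widehat\lambda$, so the paper simply feeds a static piecewise constant approximation of $\omega$ into~\eqref{eq:tosta} and passes to the limit, avoiding your rigid-translation workaround (which is correct but superfluous).
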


\begin{proof}
  Let $\delta>0$ be the constant of
  Proposition~\ref{prop:theorem44}. Define
  \begin{displaymath}
    \mathcal{D}_t
    =
    \left\{ 
      u \in \L1 ( \reali; \Omega)
      \colon 
      u(x) =0 \mbox{ for all } x \leq \gamma(t)
      \mbox{ and }
      \boldsymbol{\Upsilon}_t(u) \leq \delta/2\right\} \,.
  \end{displaymath}
  Fix $u_o\in\mathcal{D}_o$. Approximate the initial and boundary data
  $(u_o,g_o)$ as in~(\ref{eq:approxinitialdata}). Since
  $\Upsilon^\epsilon(0) \leq \boldsymbol{\Upsilon}_0
  \left(u^\epsilon(0, \cdot) \right) \leq \boldsymbol{\Upsilon}_0
  (u_o)+\epsilon < \delta/2 + \epsilon < \delta$, we can construct the
  $\epsilon$-approximate solutions $u^\epsilon(t,x)$. As
  in~\cite[Section 8.3]{BressanLectureNotes} we observe that for
  $0<\epsilon'\leq \epsilon$, the $\epsilon'$-approximate solution is
  also an $\epsilon$-approximate solution.  Therefore, we can
  apply~(\ref{eq:tosta}) with $u^{\epsilon'}$ in place of $v$ with
  $\omega = 0$ and $g = \bar g$. Hence, because of~(\ref{eq:equiv}),
  we obtain
  \begin{displaymath}
    \norma{u^{\epsilon}(t)-u^{\epsilon'}(t)}_{\L1}
    \leq 
    L \cdot \norma{u^{\epsilon}_o - u^{\epsilon'}_o}_{\L1} + \epsilon \cdot t \,.
  \end{displaymath}
  For any $t\geq 0$, $u^\epsilon(t)$ is a Cauchy sequence which
  converges to a function $u(t) \in \L1 (\reali; \reali^n)$ that
  vanishes for $x \leq \gamma(t)$. The potential
  $\Upsilon^\epsilon(t)$ defined on $\epsilon$-approximate solutions
  is non increasing and differs from $\boldsymbol{\Upsilon}_t
  \left(u^\epsilon(t)\right)$ due to nonphysical waves and to the
  different boundary conditions $g^\epsilon$ and $g$. Therefore,
  (\ref{eq:approxinitialdata}) and the lower semicontinuity of the
  total variation and of $\boldsymbol{\Upsilon}_t$ implies that
  $u(t)\in\mathcal{D}_t$ for any $t\geq 0$.  We set
  $P(t,0)u_o=u(t)$. It is obvious that our procedure can start at any
  time $t_o\geq 0$, so we can define $P(t,t_o)u \in
  \mathcal{D}_{t+t_o}$ for any $u \in \mathcal{D}_{t_o}$.

  We want to show now that the map just defined satisfies all the
  properties of Theorem~\ref{thm:SRS}. The Lipschitz continuity $t \to
  P(t,t_o)u$ is satisfied by construction.  If we now consider a
  different initial and boundary data, say $(v,\tilde g)$ and the same
  boundary curve $\gamma$, in general we have a different map $\tilde
  P$. Taking the limit in~(\ref{eq:tosta}) and using~(\ref{eq:equiv})
  for the corresponding $\epsilon$-approximations, we get that for any
  $\L1$ function $\omega$ dependent only on $x$ and with small total
  variation
  \begin{equation}
    \label{eq:PartLipFirst}
    \!\!\!
    \begin{array}{rcl}
      & &
      \displaystyle
      \norma{P(t, t_o)u - \tilde{P} (t, t_o) v - \omega}_{\L1}
      \\
      & \leq &
      \displaystyle
      L \cdot \left\{
        \norma{u - v - \omega}_{\L1}
        +
        \int_{t_o}^{t_o+t}  \norma{g(\tau) - \tilde g(\tau)} \, d\tau
        +
        t \cdot \tv (\omega)
      \right\} \,.
    \end{array}
  \end{equation}
  bounding the dependence from the error term $\omega$ and proving the
  Lipschitz continuity in $g$ and $u$.

  Point~\emph{3.} in Definition~\ref{def:SolConv} is obtained by
  standard methods, see~\cite[Section~7.4]{BressanLectureNotes}.

  Concerning the process property, take $u \in \mathcal{D}_0$ and
  consider its $\epsilon$--approx\-imation $u^\epsilon$. Let $\tilde
  \epsilon \in \left] 0, \epsilon \right[$ and call $\tilde u
  ^{\tilde\epsilon}$ be the $\tilde\epsilon$-approximate solution with
  initial datum $u^\epsilon(t)$ at time $t$. Then, if $s \geq 0$,
  $\tilde u^{\tilde \epsilon}$ is also an $\epsilon$-approximate
  solution in $[t, t+s]$. Therefore, applying~(\ref{eq:equiv})
  and~(\ref{eq:tosta}) in the interval $[t, t+s]$, we obtain
  \begin{eqnarray*}
    & &
    \norma{P(t+s,0)u -P(s,t) \circ P(t,0) u}_{\L1}
    \\
    & = &
    \lim_{\epsilon \to 0}
    \norma{u^\epsilon(t+s) - P(s,t) u^\epsilon(t)}_{\L1}
    \\
    & = &
    \lim_{\epsilon \to 0}
    \lim_{\tilde\epsilon \to 0}
    \norma{u^\epsilon(t+s) - \tilde u^{\tilde\epsilon}(t+s)}_{\L1}
    \\
    & \leq &
    \lim_{\epsilon \to 0}
    \lim_{\tilde \epsilon \to 0}
    C \left(
      \int_t^{t+s} \norma{g^\epsilon(\xi) - g^{\tilde\epsilon}(\xi)} \,
      d\xi +
      \epsilon \, s
    \right)
    \\
    & = &
    0 \,.
  \end{eqnarray*}
  We can repeat the same argument for any initial data $t_o\geq 0$.

  Concerning the dependence on the initial time $t_o$, take $0 \leq
  t_o \leq t_o'$ and $u \in \mathcal{D}_{t_o}$, $u' \in
  \mathcal{D}_{t_o'}$. If $0 \leq t \leq t_o'-t_o$, then obviously
  \begin{displaymath}
    \norma{P(t,t_o)u-P(t,t_o')u'}_{\L1}
    \leq 
    C \, \left( \modulo{t_o-t_o'} + \norma{u-u'}_{\L1} \right) \,.
  \end{displaymath}
  If $t > t_o'-t_o$, the process property implies
  \begin{eqnarray*}
    & &
    \norma{P(t,t_o)u-P(t,t_o')u'}_{\L1}
    \\
    & = &
    \norma{P(t+t_o-t_o',t_o')\circ P(t_o'-t_o,t_o)u-P(t,t_o')u'}_{\L1}
    \\
    & \leq & 
    C \, \norma{P(t_o'-t_o,t_o)u-u'}_{\L1} + C \, \modulo{t_o-t_o'}
    \\
    & \leq & 
    C \, \norma{u-u'}_{\L1} + C \modulo{t_o-t_o'} \,,
  \end{eqnarray*}
  completing the proof of~(\ref{eq:PartLip}).
\end{proof}

The following proposition extends to the present case the key
properties of the Glimm functionals~(\ref{eq:DefFunEsatt}).

\begin{proposition}
  Let system~(\ref{eq:CPHCL}) satisfy the assumptions of
  Theorem~\ref{thm:SRS}. Then, for any $u \in \mathcal{D}_{0}$, the
  map $t \to \boldsymbol{\Upsilon}_t \left( P(t,0)u \right)$ is non
  increasing for $t \geq 0$.
\end{proposition}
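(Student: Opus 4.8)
The plan is to deduce the monotonicity of $t \mapsto \boldsymbol{\Upsilon}_t\left(P(t,0)u\right)$ from the fact, proved during the construction in Section~\ref{sec:TechConv}, that the running Glimm functional $\Upsilon^\epsilon$ is non increasing along every wave front tracking $\epsilon$--approximate solution. First I would reduce to a statement localised at an arbitrary initial time: by the process property~\emph{\ref{it:semigroup})} of Proposition~\ref{prop:FirstPart}, for $0 \le t_1 \le t_2$ we have $P(t_2,0)u = P(t_2-t_1,t_1) \circ P(t_1,0)u$ with $P(t_1,0)u \in \mathcal{D}_{t_1}$, so it suffices to show
\begin{displaymath}
  \boldsymbol{\Upsilon}_{t_o+s}\left( P(s,t_o)v \right)
  \le
  \boldsymbol{\Upsilon}_{t_o}(v)
  \qquad
  \mbox{for all } t_o,s \ge 0 \mbox{ and } v \in \mathcal{D}_{t_o} \,.
\end{displaymath}

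To prove this, I would fix $t_o,s$ and $v \in \mathcal{D}_{t_o}$ and, exactly as in Proposition~\ref{prop:FirstPart}, approximate $v$ and the restriction of $g$ to $[t_o,+\infty[$ by piecewise constant data $v^\epsilon$, $g^\epsilon$ satisfying $\modulo{\boldsymbol{\Upsilon}_{t_o}(v^\epsilon) - \boldsymbol{\Upsilon}_{t_o}(v)} < \epsilon$, $\tv(g^\epsilon;]\tau,+\infty[) \le \tv(g;]\tau,+\infty[)$ for $\tau \ge t_o$ and $\norma{g^\epsilon - g}_{\L\infty} < \epsilon$, and call $u^\epsilon$ the resulting $\epsilon$--approximate solution on $[t_o,t_o+s]$, which by Proposition~\ref{prop:FirstPart} satisfies $u^\epsilon(t_o+s) \to P(s,t_o)v$ in $\L1$. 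The core of the argument is the comparison, uniform in $t \in [t_o,t_o+s]$, between the two Glimm functionals~(\ref{eq:DefFunEsatt}) and~(\ref{eq:DefFun}):
\begin{displaymath}
  \modulo{\boldsymbol{\Upsilon}_t\left(u^\epsilon(t)\right) - \Upsilon^\epsilon(t)}
  \le
  C\epsilon
  +
  H_1 \left( \tv(g;[t,+\infty[) - \tv(g^\epsilon;[t,+\infty[) \right) \,,
\end{displaymath}
the $C\epsilon$ collecting the contribution of the non physical fronts (whose total strength is $\le C\epsilon$ and which in~(\ref{eq:DefFunEsatt}) get re-decomposed into physical waves) and the error $\norma{E_b^\sigma(\cdot,g(t)) - E_b^\sigma(\cdot,g^\epsilon(t))} \le C \norma{g(t)-g^\epsilon(t)} < C\epsilon$ in the boundary strength~(\ref{eq:DefSigmaBd}), via the Lipschitz regularity of $E_b^\sigma$ from Lemma~\ref{lem:RP}; the last term is non negative, and since $g^\epsilon \to g$ uniformly and $\tv(g^\epsilon;]\cdot,+\infty[) \le \tv(g;]\cdot,+\infty[)$, lower semicontinuity of the total variation forces it to vanish as $\epsilon \to 0$ for every fixed $t$. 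At $t = t_o$ there are no non physical fronts and $\tv(g^\epsilon;[t_o,+\infty[) \le \tv(g;[t_o,+\infty[)$, so the same computation yields $\Upsilon^\epsilon(t_o) \le \boldsymbol{\Upsilon}_{t_o}(v^\epsilon) + C\epsilon \le \boldsymbol{\Upsilon}_{t_o}(v) + C\epsilon$; in particular $\Upsilon^\epsilon(t_o) < \delta$ for $\epsilon$ small, so the $\epsilon$--approximate construction is legitimate.

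The conclusion then follows by chaining the inequalities:
\begin{displaymath}
  \boldsymbol{\Upsilon}_{t_o+s}\left(P(s,t_o)v\right)
  \le
  \liminf_{\epsilon\to 0} \Upsilon^\epsilon(t_o+s)
  \le
  \liminf_{\epsilon\to 0} \Upsilon^\epsilon(t_o)
  \le
  \boldsymbol{\Upsilon}_{t_o}(v) \,,
\end{displaymath}
where the first inequality uses the lower semicontinuity of $\boldsymbol{\Upsilon}_{t_o+s}$ (the extension recalled after~(\ref{eq:DefFunEsatt})) together with the comparison estimate at $t = t_o+s$, whose error terms vanish in the limit; the second uses the monotonicity of $t \mapsto \Upsilon^\epsilon(t)$; and the third uses the bound at $t_o$ above, letting $C\epsilon \to 0$. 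I expect the delicate point to be precisely the uniform comparison between $\boldsymbol{\Upsilon}_t(u^\epsilon(t))$ and $\Upsilon^\epsilon(t)$: one must show that passing from the running functional, in which non physical fronts form their own family and the boundary is handled through $g^\epsilon$, to the exact functional~(\ref{eq:DefFunEsatt}) evaluated on the same piecewise constant profile with the exact datum $g$ costs only $O(\epsilon)$ uniformly in $t$. Since, as remarked after~(\ref{eq:DefFunEsatt}), the quadratic potential $\boldsymbol{Q}_t$ alone has no lower semicontinuous extension, this comparison must be carried out on the whole combination $\boldsymbol{\Upsilon}_t = \boldsymbol{V}_t + H_2 \boldsymbol{Q}_t + H_1 \tv(g;[t,+\infty[)$, again with Lemma~\ref{lem:barK} used to absorb the boundary contributions.
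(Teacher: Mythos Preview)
Your proposal is correct and follows essentially the same approach as the paper: use the monotonicity of $\Upsilon^\epsilon$ along $\epsilon$--approximate solutions, the approximation properties~(\ref{eq:approxinitialdata}), the lower semicontinuity of $\boldsymbol{\Upsilon}_t$, and the lower semicontinuity of the total variation of $g$, then pass to the limit $\epsilon\to 0$. The paper's proof compresses all of this into three lines, while you spell out explicitly the comparison between $\boldsymbol{\Upsilon}_t\left(u^\epsilon(t)\right)$ and $\Upsilon^\epsilon(t)$ and the chain of inequalities; this is exactly the content the paper leaves implicit when it writes ``follow passing to the limit $\epsilon\to 0$''.
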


\begin{proof}
  Above, we showed that the map $t \to \Upsilon^\epsilon(t)$ decreases
  along $\epsilon$-approximate solutions. The monotonicity of $t \to
  \boldsymbol{\Upsilon}_t \left( P(t,0)u \right)$ follow passing to
  the limit $\epsilon \to 0$, thanks to the lower semicontinuity
  proved in~\cite{BaitiBressan2, ColomboGuerra2},
  to~(\ref{eq:approxinitialdata}) and to the lower semicontinuity of
  the total variation.
\end{proof}

In order to complete the proof of Theorem~\ref{thm:SRS}, we prove
propositions~\ref{prop:curves} and~\ref{prop:uniqueness} together with
an auxiliary lemma.

\begin{proofof}{Proposition~\ref{prop:curves}}
  Let $u^\epsilon$ be an $\epsilon$-approximate wave front tracking
  solution converging to $u$. Since the convergence is also in
  $\Lloc1(\mathbb{D}_\gamma; \reali^n)$, apply Lemma~\ref{lem:curves}
  and Lebesgue Dominated convergence Theorem to obtain:
  \begin{eqnarray*}
    & &
    \int_0^{T}
    \norma{u\left(t, \Gamma_0(t) \right) - u\left(t, \Gamma_1(t) \right)}
    \, dt
    \\
    & = &
    \lim_{\delta \to 0}
    \frac{1}{\delta} \int_0^\delta \int_0^{T} 
    \norma{
      u \left(t, \Gamma_0(t)+x \right) - 
      u \left(t, \Gamma_1(t) +x
      \right)}
    \, dt \, dx
    \\
    & = &
    \lim_{\delta \to 0} \lim_{\epsilon \to 0}
    \frac{1}{\delta} \int_0^\delta \int_0^{T} 
    \norma{
      u^\epsilon \left(t, \Gamma_0(t)+x \right) - 
      u^\epsilon \left(t, \Gamma_1(t) +x
      \right)}
    \, dt \, dx
    \\
    & \leq &
    \mathcal{K} \cdot
    \frac{\tv(u_o) + \tv(g)}{c} \cdot
    \norma{\Gamma_1 - \Gamma_0}_{\C0([0,T])}
  \end{eqnarray*}
  completing the proof.
\end{proofof}

\begin{lemma}
  \label{lemma:lemma46}
  Let $u^\epsilon$ be an $\epsilon$-approximate wave front tracking
  solution to~(\ref{eq:CPHCL}) converging to $u$. Let $\Gamma$ be an
  $\ell$--non-characteristic curve. Then,
  \begin{displaymath}
    u^\epsilon \left(\cdot,\Gamma(\cdot) \right)
    \to
    u\left(\cdot,\Gamma(\cdot) \right)
    \quad \mbox{ in } \quad \Lloc1 (\reali^+;\Omega) \,.
  \end{displaymath}
\end{lemma}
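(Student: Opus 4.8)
The plan is to control $\norma{u^\epsilon(\cdot,\Gamma(\cdot))-u(\cdot,\Gamma(\cdot))}_{\L1}$ by inserting, for a small transversal shift $x\geq0$, the traces along the shifted curve $\Gamma+x$, and then averaging over $x\in[0,\delta]$; the point is that Lemma~\ref{lem:curves} provides, uniformly in $\epsilon$, a modulus of continuity of the traces under such shifts, which ``regularizes'' the otherwise discontinuous trace operation. Fix $T>0$; it suffices to prove convergence in $\L1([0,T];\Omega)$. Since $\Gamma+x$ has the same derivative as $\Gamma$, it is again $\ell$--non-characteristic with the same constant $c$, so Lemma~\ref{lem:curves} applies to the pair $\Gamma$, $\Gamma+x$ for the approximations $u^\epsilon$ (with a constant uniform in $\epsilon$), and Proposition~\ref{prop:curves} applies to the same pair for the limit $u$. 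For a.e.\ $t$ and every $x\geq0$ one writes
\begin{eqnarray*}
  \norma{u^\epsilon(t,\Gamma(t)) - u(t,\Gamma(t))}
  & \leq &
  \norma{u^\epsilon(t,\Gamma(t)) - u^\epsilon(t,\Gamma(t)+x)}
  \\
  & &
  +
  \norma{u^\epsilon(t,\Gamma(t)+x) - u(t,\Gamma(t)+x)}
  +
  \norma{u(t,\Gamma(t)+x) - u(t,\Gamma(t))} ,
\end{eqnarray*}
then integrates in $t$ over $[0,T]$ and averages in $x$ over $[0,\delta]$; since the left-hand side does not depend on $x$, it is unchanged by the average.

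The first and third terms, after integrating and averaging, are bounded by $C_\star\,\delta$: by Fubini and Lemma~\ref{lem:curves},
\begin{displaymath}
  \frac1\delta \int_0^\delta \norma{u^\epsilon(\cdot,\Gamma(\cdot)) - u^\epsilon(\cdot,\Gamma(\cdot)+x)}_{\L1([0,T])}\,dx
  \leq
  \frac1\delta \int_0^\delta \frac{\mathcal{K}}{c}\left(\tv(u_o^\epsilon)+\tv(g^\epsilon)\right)\,x\,dx
  \leq
  C_\star\,\delta ,
\end{displaymath}
with $C_\star$ independent of $\epsilon$ and $\delta$ thanks to~(\ref{eq:approxinitialdata}); the third term is estimated identically with Proposition~\ref{prop:curves} in place of Lemma~\ref{lem:curves}. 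For the middle term, with $\delta$ fixed, Fubini and the substitution $y=\Gamma(t)+x$ give
\begin{eqnarray*}
  \frac1\delta \int_0^\delta \int_0^T \norma{u^\epsilon(t,\Gamma(t)+x) - u(t,\Gamma(t)+x)}\,dt\,dx
  & \leq &
  \frac1\delta \int_0^T \norma{u^\epsilon(t) - u(t)}_{\L1}\,dt
  \\
  & \leq &
  \frac{T}{\delta}\,\sup_{t\in[0,T]} \norma{u^\epsilon(t) - u(t)}_{\L1} ,
\end{eqnarray*}
which tends to $0$ as $\epsilon\to0$ since $u^\epsilon\to u$ in $\C0([0,T];\L1)$. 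Collecting the three bounds,
\begin{displaymath}
  \norma{u^\epsilon(\cdot,\Gamma(\cdot)) - u(\cdot,\Gamma(\cdot))}_{\L1([0,T])}
  \leq
  2C_\star\,\delta + \frac{T}{\delta}\,\sup_{t\in[0,T]} \norma{u^\epsilon(t) - u(t)}_{\L1} \,;
\end{displaymath}
letting first $\epsilon\to0$ and then $\delta\to0$ yields the claim, and since $T$ is arbitrary the convergence holds in $\Lloc1(\reali^+;\Omega)$.

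The one point that will need care is the direction of the transversal shift: it must be taken towards the interior of $\mathbb{D}_\gamma$, i.e.\ $x\geq0$. A shift in the opposite direction would, at times where $\Gamma$ coincides with the boundary $\gamma$, compare the generally nonzero trace $u^\epsilon(t,\gamma(t)+)$ with the value $0$ taken to the left of $\gamma$, so the $x$-average would fail to approximate $u^\epsilon(t,\gamma(t))$ as $\delta\to0$; with rightward shifts and the right-continuity convention on $\BV$ functions this difficulty disappears, and the proof of Proposition~\ref{prop:curves}, which already relies on this averaging device, guarantees that $u$ itself possesses the transversal regularity used to bound the third term. Apart from this, the proof is simply the standard interchange of the $\epsilon$-- and $\delta$--limits and invokes nothing beyond Lemma~\ref{lem:curves}, Proposition~\ref{prop:curves} and the uniform convergence $u^\epsilon\to u$.
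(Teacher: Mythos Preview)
Your proof is correct and follows essentially the same approach as the paper: the three-term split via a transversal shift $x$, with the outer terms controlled by Lemma~\ref{lem:curves} and Proposition~\ref{prop:curves}. The only difference is in the handling of the middle term: the paper uses the $\Lloc1(\mathbb{D}_\gamma)$ convergence to extract a subsequence along which $u^{\epsilon_\nu}(\cdot,\Gamma(\cdot)+x)\to u(\cdot,\Gamma(\cdot)+x)$ for a.e.\ fixed $x$, and then argues via uniqueness of the limit, whereas you average over $x\in[0,\delta]$ and invoke the stronger $\C0([0,T];\L1)$ convergence directly --- a slight simplification that avoids the subsequence step.
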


\begin{proof}
  By the convergence of $u^\epsilon$ to $u$ in $\Lloc1
  (\mathbb{D}_\gamma;\Omega)$, there exists a sequence $\epsilon_\nu$
  converging to $0$ such that for a.e.~$x$
  \begin{displaymath}
    u^{\epsilon_\nu} \left( \cdot, \Gamma(\cdot)+x\right) 
    \to
    u \left( \cdot, , \Gamma(\cdot)+x \right)
    \quad \mbox{ in } \quad 
    \Lloc1 (\reali^+, \Omega) \,.
  \end{displaymath}
  Then, for any $T>0$ and for any $x$ for which the convergence above
  holds,
  \begin{eqnarray*}
    & &
    \int_0^T 
    \norma{
      u^{\epsilon_\nu} \left(t, \Gamma(t)\right) -
      u \left(t, \Gamma(t)\right)
    } \, dt
    \\
    & \leq &
    \int_0^T 
    \norma{
      u^{\epsilon_\nu} \left(t, \Gamma(t)\right) -
      u^{\epsilon_\nu} \left(t, \Gamma(t)+x\right)
    } \, dt
    \\
    & &
    \quad +
    \int_0^T 
    \norma{
      u^{\epsilon_\nu} \left(t, \Gamma(t)+x\right) -
      u \left(t, \Gamma(t)+x\right)
    } \, dt
    \\
    & &
    \quad +
    \int_0^T 
    \norma{
      u \left(t, \Gamma(t)+x\right) -
      u \left(t, \Gamma(t)\right)
    } \, dt
    \\
    & \leq &
    2 \mathcal{K} \frac{\tv(u_o) + \tv(g)}{c} \modulo{x}
    +
    \int_0^T 
    \norma{
      u^{\epsilon_n} \left(t, \Gamma(t)+x\right) -
      u \left(t, \Gamma(t)+x\right)
    } \, dt
  \end{eqnarray*}
  where we used Lemma~\ref{lem:curves} and
  Proposition~\ref{prop:curves}. Hence
  \begin{eqnarray*}
    \limsup_{\nu\to+\infty}
    \int_0^T 
    \norma{
      u^{\epsilon_\nu} \left(t, \Gamma(t)\right) -
      u \left(t, \Gamma(t)\right)
    } \, dt
    & \leq &
    C \, \modulo{x}
  \end{eqnarray*}
  and the final estimate follows by the arbitrariness of $x$,
  independently from the sequence $\epsilon_\nu$, thanks to the
  uniqueness of the limit $u$.
\end{proof}

\begin{proofof}{Proposition~\ref{prop:uniqueness}}
  Let $u^\epsilon$, respectively $\tilde u^\epsilon$, be an
  $\epsilon$-approximate wave front tracking solutions
  of~(\ref{eq:CPHCL}), respectively~(\ref{eq:small}). Apply
  Proposition~\ref{prop:theorem44} and use the
  equivalence~(\ref{eq:equiv}) to obtain
  \begin{eqnarray*}
    & &
    \int_{\tilde\gamma(t)}^{+\infty}
    \norma{u^\epsilon(t,x) - \tilde u^\epsilon(t,x)} \, dx
    \\
    & \leq &
    L \cdot \Bigl(
    \int_{\tilde\gamma(0)}^{+\infty}
    \norma{u^\epsilon (0,x) - \tilde u^\epsilon(0,x)} \, dx
    \\
    & & 
    +
    \int_0^t \norma{
      b \left( u^\epsilon \left(s,\tilde\gamma(s) \right) \right) -
      b \left( \tilde u^\epsilon \left(s,\tilde\gamma(s) \right) \right)
    }
    \, ds
    \Bigr)
    +C \epsilon t
  \end{eqnarray*}
  and the limit $\epsilon \to 0$ completes the proof.
\end{proofof}

\begin{proofof}{Theorem~\ref{thm:SRS}}
  To conclude the proof of Theorem~\ref{thm:SRS}, observe first that
  for the $\epsilon$--approximate solutions, we have
  \begin{displaymath}
    \norma{b\left(u^\epsilon(t,\gamma(t)+)\right)-g(t)}\leq \epsilon
  \end{displaymath}
  therefore as $\epsilon\to 0$ Lemma~\ref{lemma:lemma46}
  implies~\emph{\ref{it:solution})}.

  Finally denote by let $P^{(\gamma,g)}$ and
  $\mathcal{D}^{(\gamma,g)}_t$ the process and the domains
  corresponding to the boundary curve and data $(\gamma,g)$. Fix two
  boundary curve and data $(\gamma,g)$, $(\bar\gamma,\bar g)$, two
  initial data $u_o\in\mathcal{D}^{(\gamma,g)}_0$, $\bar
  u_o\in\mathcal{D}^{(\bar\gamma,\bar g)}_0$ and define
  \begin{displaymath}
    \!\!
    \begin{array}{rcl@{\ }rcl}
      \Gamma_0(t) & = &\min \left\{\gamma(t),\bar\gamma(t)\right\},
      &
      \Gamma_1(t) & = &\max\left\{\gamma(t),\bar\gamma(t)\right\}
      \\
      \tilde u_o(x) & = & \left\{
        \begin{array}{lr}
          0 & \hbox{for } x \leq \Gamma_1(0) \\
          u_o(x) & \hbox{for } x > \Gamma_1(0)
        \end{array}
      \right.
      &
      \tilde{\bar{u}}_o(x)& = & \left\{
        \begin{array}{lr}
          0 & \hbox{for } x \leq \Gamma_1(0) \\
          \bar u_o(x) & \hbox{for } x > \Gamma_1(0)
        \end{array}
      \right.
      \\
      \tilde{g}(t) 
      & = &
      b\!\left(\!
        \left[P^{(\gamma,g)}(t,0)u_o\right] \left( \Gamma_1(t) \right)\right)
      &
      \tilde{\bar  g}(t)
      & = &
      b\!\left(\!
        \left[P^{(\bar \gamma,\bar g)}(t,0)\bar
          u_o\right] \left( \Gamma_1(t) \right)
      \right)\!.
    \end{array}
  \end{displaymath}
  By Proposition~\ref{prop:uniqueness} we have for $x>\Gamma_1(t)$:
  \begin{eqnarray*}
    \left[P^{(\gamma,g)}(t,0)u_o\right](x) & = & 
    \left[P^{(\Gamma_1,\tilde g)}(t,0)\tilde u_o\right](x),\\ 
    \left[P^{(\bar \gamma,\bar g)}(t,0)\bar u_o\right](x)
    & = &
    \left[P^{(\Gamma_1,\tilde {\bar g})}(t,0)\tilde {\bar u}_o\right](x).
  \end{eqnarray*}
  Applying the result for the unchanged boundary curve, we get:
  \begin{eqnarray*}
    & &
    \norma{P^{(\gamma,g)}(t,0)u_o-P^{(\bar \gamma,\bar g)}(t,0)\bar u_o}_{\L1}\\
    & = & \int_{\Gamma_0(t)}^{\Gamma_1(t)}
    \norma{P^{(\gamma,g)}(t,0)u_o-P^{(\bar \gamma,\bar g)}(t,0){\bar
        u}_o} \, dx
    \\
    & &
    +\int_{\Gamma_1(t)}^{+\infty}\norma{P^{(\Gamma_1,\tilde
        g)}(t,0)\tilde u_o-P^{(\Gamma_1,\tilde{\bar g})}(t,0)\tilde
      {\bar u}_o}\,dx
    \\
    & \leq &
    C\modulo{\Gamma_1(t)-\Gamma_0(t)}+
    C\int_{0}^{t}\norma{\tilde g(t)-\tilde{\bar
        g}(t)}\,dt+C\norma{\tilde u_o-\tilde {\bar u}_o}_{\L1}
    \\
    & \leq &
    C\norma{\Gamma_1-\Gamma_0}_{\C0}+C\norma{u_o-{\bar u}_o}_{\L1}
    \\
    & &
    + C \int_{0}^{t} \norma{\tilde g(t)-g(t)+g(t)-\bar g(t) + \bar g(t)-\tilde{\bar g}(t)} \,dt
    \\
    & \leq &
    C\norma{\gamma-\bar \gamma}_{\C0}+C\norma{ u_o-{\bar
        u}_o}_{\L1}+C\int_{0}^{t}\norma{g(t)-\bar g(t)}\,dt
    \\
    & &
    +C\int_{0}^{t}\norma{\tilde
      g(t)-g(t)}\,dt+C\int_{0}^{t}\norma{\bar g(t)-\tilde{\bar
        g}(t)}\,dt.
  \end{eqnarray*}
  Finally Proposition~\ref{prop:curves} and the Lipschitz continuity
  of $b$ imply
  \begin{eqnarray*}
    & &
    \int_{0}^{t}\norma{\tilde g(t)-g(t)}\,dt
    \\
    & = &
    \int_{0}^{t}
    \norma{b\left(\left[P^{(\gamma,g)}(t,0)u_o\right]
        \left( \Gamma_1(t) \right) \right)
      -
      b\left(\left[P^{(\gamma,g)}(t,0)u_o\right](\gamma(t))
      \right)} \, dt
    \\
    & \leq &
    C\norma{\Gamma_1-\gamma}_{\C0}\le\norma{\bar\gamma-\gamma}_{\C0}
  \end{eqnarray*}
  completing the proof of~\emph{\ref{it:Lipschitz})}, since the
  computations for $\bar g$ and $\tilde{\bar g}$ are identical.

  We prove now the tangency condition~\emph{\ref{it:tangent})}. Fix
  $t_o \geq 0$, $u\in \mathcal{D}_{t_o}$ and let $F$ be defined
  by~(\ref{eq:local}) and denote by $\tilde P$ the process defined
  above with $g$ replaced by $\tilde g(t_o+t) = b \left( \left( S_{t}
      \tilde u \right) \left(\gamma(t_o+t) \right) \right)$ with
  $\tilde u$ as in~(\ref{eq:tilde}). By
  Proposition~\ref{prop:uniqueness}, $F(t,t_o)u = \tilde
  P(t,t_o)u$. Using~\emph{\ref{it:Lipschitz})}, we have
  \begin{eqnarray*}
    & &
    \frac{1}{t}
    \norma{P(t,t_o)u - F(t,t_o)u}_{\L1}
    \\
    & \leq &
    \frac{L}{t} \int_{t_o}^{t_o+t} \norma{\tilde g(s) - g(s)} \, ds
    \\
    & \leq &
    \frac{L}{t} \int_{t_o}^{t_o+t} \norma{\tilde g(s) - g(t_o+)} \, ds
    +
    \frac{L}{t} \int_{t_o}^{t_o+t} \norma{g(t_o+) - g(s)} \, ds \,.
  \end{eqnarray*}
  The latter term vanishes as $t \to 0$ by the definition of
  $g(t_o+)$. Consider now the former term. Fix a positive and
  sufficiently small $\delta$ so that the curve $\psi(s) = \gamma(s) +
  \delta (s-t_o)$ is $\ell$--non-characteristic. Let $\xi \in [0,1]$.
  \begin{eqnarray*}
    & &
    \frac{1}{t} \int_{t_o}^{t_o+t} \norma{\tilde g(s) - g(t_o+)} \, ds
    \\
    & = &
    \frac{1}{t} \int_{t_o}^{t_o+t} 
    \norma{
      b \left( (S_{s-t_o} \tilde u)  \left( \gamma(s) \right)  \right) - g(t_o+)
    } \, ds
    \\
    & \leq &
    \frac{1}{t}
    \int_{t_o}^{t_o+t} \norma{
      b \left( (S_{s-t_o} \tilde u) \left( \gamma(s) \right) \right) 
      -
      b \left( (S_{s-t_o} \tilde u) \left((1-\xi)\gamma(s) + \xi\psi(s)\right) \right)
    }
    \, ds
    \\
    & &
    +
    \frac{1}{t}
    \int_{t_o}^{t_o+t} \norma{
      b \left( (S_{s-t_o} \tilde u) \left((1-\xi)\gamma(s) + \xi\psi(s)\right) \right)
      -
      g(t_o+)
    }
    \, ds \,.
  \end{eqnarray*}
  By Proposition~\ref{prop:curves}, the first term is bounded by
  \begin{displaymath}
    \frac{C}{t} 
    \norma{\gamma - \left( (1-\xi)\gamma+\xi\psi\right)}_{\C0([t_o,t_o+t])}
    \leq 
    \frac{C}{t} \norma{\gamma - \psi}_{\C0([t_o,t_o+t])}
    \leq C \delta \,.
  \end{displaymath}
  Concerning the latter term, integrate on $\xi$ over $[0,1]$ and
  obtain, with the change of variable $x = (1-\xi)\gamma(s) +
  \xi\psi(s)$ and with $u^\sigma$ as in Lemma~\ref{lem:RP},
  \begin{eqnarray}
    \nonumber
    & &
    \frac{1}{t}
    \int_{t_o}^{t_o+t} \norma{
      b \left( (S_{s-t_o} \tilde u) \left((1-\xi)\gamma(s) + \xi\psi(s)\right) \right)
      -
      g(t_o+)
    }
    \, ds
    \\
    \nonumber
    & = &
    \frac{1}{t}
    \int_{t_o}^{t_o+t} 
    \frac{1}{\psi(s) - \gamma(s)}
    \int_{\gamma(s)}^{\psi(s)}
    \norma{
      b \left((S_{s-t_o} \tilde u)(x) \right) - b(u^\sigma)
    }
    \, dx \, ds
    \\
    \label{eq:this}
    & \leq &
    \frac{C}{t\delta}
    \int_{t_o}^{t_o+t} 
    \frac{1}{s-t_o}
    \int_{\gamma(s)}^{\psi(s)}
    \norma{(S_{s-t_o} \tilde u) (x) - u^\sigma}
    \, dx \, ds
  \end{eqnarray}
  Following~\cite[Section~9.3]{BressanLectureNotes}, let $U^\sharp$ be
  the Lax solution to the Riemann problem
  \begin{displaymath}
    \left\{
      \begin{array}{l}
        \partial_t u + \partial_x f(u) = 0
        \\
        u(0,x) = \left\{
          \begin{array}{l@{\quad\mbox{if }}rcl}
            u^\sigma & x & < & 0
            \\
            u \left( \gamma(t_o)+\right) & x & \geq & 0 \,.
          \end{array}
        \right.
      \end{array}
    \right.
  \end{displaymath}
  By the basic properties of the solutions to Riemann problem and the
  definition of $\psi$, for all $s \in [t_o, t]$ and $x \in
  [\gamma(s), \psi(s) ]$, $U^\sharp(s,x) = u^\sigma$. Then,
  \begin{eqnarray*}
    (\ref{eq:this})
    & = &
    \frac{C}{t\delta}
    \int_{t_o}^{t_o+t} 
    \frac{1}{s-t_o}
    \int_{\gamma(s)}^{\psi(s)}
    \norma{(S_{s-t_o} \tilde u) (x) - U^\sharp(s,x)}
    \, dx \, ds
    \\
    & \leq &
    \frac{C}{t\delta}
    \int_{t_o}^{t_o+t} 
    \frac{1}{s-t_o}
    \int_{\gamma(t_o)-(s-t_o)\hat\lambda}^{\gamma(t_o)+(s-t_o)\hat\lambda}
    \norma{(S_{s-t_o} \tilde u) (x) - U^\sharp(s,x)}
    \, dx \, ds
  \end{eqnarray*}
  By~\cite[formula~(9.16)]{BressanLectureNotes},
  \begin{displaymath}
    \lim_{s\to t_o}
    \frac{1}{s-t_o}
    \int_{\gamma(t_o)-(s-t_o)\hat\lambda}^{\gamma(t_o)+(s-t_o)\hat\lambda}
    \norma{(S_{s-t_o} \tilde u) (x) - U^\sharp(s,x)}
    \, dx
    =
    0
  \end{displaymath}
  so that $\lim_{t\to0} (\ref{eq:this})=0$. Collecting the various
  terms,
  \begin{displaymath}
    \limsup_{t\to 0}
    \frac{1}{t} \int_{t_o}^{t_o+t} \norma{\tilde g(s) - g(t_o+)} \, ds
    \leq 
    C \delta
  \end{displaymath}
  and by the arbitrariness of $\delta$, the tangency
  condition~\emph{\ref{it:tangent})} follows.

  The characterization of $P$ through~\emph{\ref{it:semigroup})},
  \emph{\ref{it:Lipschitz})} with $\omega = 0$
  and~\emph{\ref{it:tangent})} implies its uniqueness through standard
  computations, see for instance~\cite[Section~6,
  Corollary~1]{BressanCauchy}.
\end{proofof}

\Section{The Source Term}
\label{sec:TechSource}
  
This section is devoted to the source term, similarly
to~\cite{AmadoriGuerra2002, ColomboGuerra, ColomboGuerra3,
  DafermosHsiao, TPLiuQuasilinear} but following the general metric
space technique in~\cite{ColomboGuerra4}, applied to $\L1$ equipped
with the $\L1$-distance $d$. The key point is to show that the map
\begin{equation}
  \label{eq:Flow}
  \check F (t,t_o) u 
  = 
  P(t,t_o)u 
  + 
  t \, G \left( P(t,t_o)u\right)
  \chi_{\strut \left[\gamma(t_o+t), +\infty\right[}
\end{equation}
is a local flow in the sense of~\cite[Definition~2.1]{ColomboGuerra4}
on suitable domains and satisfies the assumptions
of~\cite[Theorem~2.6]{ColomboGuerra4}.

Following~\cite[Section~3]{ColomboGuerra2}, we modify the functional
$\Phi$ in~(\ref{eq:Phi}) and define $\boldsymbol{\Phi}_t$ on all
piecewise constant functions, not necessarily $\epsilon$--approximate
solutions. Therefore, the definition of $\boldsymbol{\Phi}_t$ does not
consider nonphysical waves and $\boldsymbol{\Phi}_0=\Phi$ at time
$t=0$.  Consider two piecewise constant functions $u, v \in
\L1\left(]\gamma(t),+\infty[,\reali^n\right)$ with finitely many jumps
and assume that $\tv(u)$ is sufficiently small.

Define $\boldsymbol{q} \equiv(q_1, \ldots, q_n)$ implicitly by
\begin{displaymath}
  v(x)
  =
  \boldsymbol{S} \left( \boldsymbol{q} (x) \right) \left( u(x) \right)
\end{displaymath}
with $\boldsymbol{S}$ as in~(\ref{eq:S}). We now consider the
functional
\begin{displaymath}
  \boldsymbol{\Phi}_t(u,v)
  =
  \bar K \sum_{i=1}^\ell \int_{\gamma(t)}^{+\infty} \modulo{q_i(x)} \,
  \boldsymbol{W}_i(x)\, dx
  +
  \sum_{i=\ell+1}^n 
  \int_{\gamma(t)}^{+\infty} \modulo{q_i(x)} \, \boldsymbol{W}_i(x)\, dx  
\end{displaymath}
where $\bar K$ is defined in the proof of
Proposition~\ref{prop:theorem44} and the weights $\boldsymbol{W}_i$
are defined setting:
\begin{displaymath}
  \boldsymbol{W}_i(x)
  =
  1 
  + 
  \kappa_1 \boldsymbol{A}_i(x) 
  + 
  \kappa_2 \left( 
    \boldsymbol{\Upsilon}_t (u) + 
    \boldsymbol{\Upsilon}_t (v) 
  \right) \,.
\end{displaymath}
The functions $\boldsymbol{A}_i$ are defined as follows. Let
$\sigma_{x,\kappa}$ be the strength of the $\kappa$-th wave in the
solution of the Riemann problem for~(\ref{eq:eqHCL}) in $u$ or $v$
located at $x$ of the family $\kappa$. Differently from the notation
in Section~\ref{sec:TechConv}, $J(u)$, respectively $J(v)$ denote the
sets of all jumps in $u$, respectively in $v$, for $x \geq
\gamma(t)$. Indeed, we let $x=\gamma(t)$ in $J$ as soon as $b \left( u
  \left( \gamma(t)+ \right) \right) \neq g(t)$ and the waves
$\sigma_{\gamma(t),k}$ are defined as in~(\ref{eq:DefSigmaBd}).

If the $i$-th characteristic field is linearly degenerate, we simply
define
\begin{displaymath}
  \boldsymbol{A}_i(x)
  \doteq  
  \sum \left\{
    \modulo{\sigma_{y,\kappa}}
    \colon
    y\in J(u) \cup J(v)
    \mbox{ and }
    \begin{array}{l}
      y<x, \, i < \kappa \leq n, \mbox{ or}
      \\
      y>x, \, 1 \leq \kappa < i
    \end{array}
  \right\}
\end{displaymath}
On the other hand, if the $i$-th field is genuinely nonlinear, the
definition of $\boldsymbol{A}_i$ will contain an additional term,
accounting for waves in $u$ and in $v$ of the same $i$-th family:
\begin{eqnarray*}
  \boldsymbol{A}_i(x)
  & \doteq & 
  \sum
  \left\{
    \modulo{\sigma_{y,\kappa}}
    \colon
    y\in J(u) \cup J(v)
    \mbox{ and }
    \begin{array}{l}
      y<x, \, i < \kappa \leq n, \mbox{ or}
      \\
      y>x, \, 1 \leq \kappa < i
    \end{array}
  \right\}
  \\
  & &
  + 
  \left\{
    \begin{array}{ll} 
      \displaystyle
      \sum
      \left\{
        \modulo{\sigma_{y,i}}
        \colon
        \begin{array}{l}
          y\in J(u), \, y<x  \mbox{ or}
          \\ 
          y\in J(v), \, y>x
        \end{array}
      \right\}
      \qquad &\hbox{if } q_i(x) < 0, 
      \\
      \displaystyle
      \sum
      \left\{
        \modulo{\sigma_{y,i}}
        \colon
        \begin{array}{l}
          y\in J(v), \, y<x  \mbox{ or}
          \\ 
          y\in J(u), \, y>x
        \end{array}
      \right\}
      \qquad &\hbox{if } q_i(x) \geq 0.
    \end{array}
  \right. 
\end{eqnarray*}
The constants $\kappa_1$, $\kappa_2$ are the same defined
in~\cite[Chapter~8]{BressanLectureNotes}. We also recall that, since
$\delta_o$ is chosen small enough, the weights satisfy $ 1\leq
\boldsymbol{W}_i(x) \leq 2 $, hence for a suitable constant $C_3 > 1$
we have
\begin{displaymath}
  \frac{1}{C_3} \, \norma{v-u}_{\L1} 
  \leq  
  \boldsymbol{\Phi}_t(u,v)
  \leq 
  C_3 \, \norma{v-u}_{\L1},
\end{displaymath}
where the $\L1$ norm is taken in the interval $\left] \gamma(t),
  +\infty \right[$.

For a fixed positive $M$, define
\begin{displaymath}
  \hat{\mathcal{D}}_t^M
  =
  \left\{
    u \in \L1(\reali;\Omega) \colon
    \begin{array}{l}
      u(x) = 0 \mbox{ for all } x < \gamma(t)
      \\
      \boldsymbol{\Upsilon}_t(u) \leq \delta - C(T-t)
      \\
      \norma{u}_{\L1} \leq Me^{Ct} + Ct
    \end{array}
  \right\}
\end{displaymath}
with $\boldsymbol{\Upsilon}_t$ defined in~(\ref{eq:DefFunEsatt}), $C,
\delta$ and $T$ to be specified below.

\begin{lemma}
  \label{lem:Stime}
  For all $t_o \in [0,T]$, $t >0$ sufficiently small and $u,\tilde u
  \in {\mathcal D}_{t_o}$,
  \begin{eqnarray}
    \nonumber
    \mathbf{\Upsilon}_{t_o+t} \left( \check F(t,t_o)u \right)
    & \leq &
    \mathbf{\Upsilon}_{t_o}(u) + C \, t
    \\
    \label{eq:Bold}
    \mathbf{\Phi}_{t_o+t} 
    \left( \check F(t,t_o)u , \check F(t,t_o)\tilde u \right)
    & \leq &
    ( 1 + C t) \, 
    \mathbf{\Phi}_{t_o} (u, \tilde u)
    \,.
  \end{eqnarray}
\end{lemma}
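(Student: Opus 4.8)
The plan is to estimate separately the effect of the convective step $P(t,t_o)$ (controlled by the results already proved for the homogeneous problem) and the effect of the additive term $t\,G(P(t,t_o)u)\,\chi_{[\gamma(t_o+t),+\infty[}$, then combine the two. For the first inequality, I would first apply the monotonicity of $t\mapsto\boldsymbol{\Upsilon}_t(P(t,t_o)u)$ proved in the proposition following Proposition~\ref{prop:FirstPart}, so that $\boldsymbol{\Upsilon}_{t_o+t}(P(t,t_o)u)\le\boldsymbol{\Upsilon}_{t_o}(u)$. It then remains to bound the change in $\boldsymbol{\Upsilon}$ caused by adding the piecewise constant perturbation $t\,G(P(t,t_o)u)\,\chi_{[\gamma(t_o+t),+\infty[}$. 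Since $\boldsymbol{\Upsilon}$ is, up to equivalence, controlled by the total variation of its argument plus the total variation of $g$, and since assumption~\textbf{($\boldsymbol{G}$)} gives $\tv(G(u))\le L_2$, adding $t\,G(\cdot)$ changes the total variation (and hence $\boldsymbol{\Upsilon}$) by at most $C\,t$: the jump at $x=\gamma(t_o+t)$ produced by the indicator contributes $\modulo{t\,G(\cdot)(\gamma(t_o+t)+)}\le C\,t$ and the interior variation contributes $t\,\tv(G(\cdot))\le t\,L_2$. This gives $\boldsymbol{\Upsilon}_{t_o+t}(\check F(t,t_o)u)\le\boldsymbol{\Upsilon}_{t_o}(u)+C\,t$, which is the first claim. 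One has to be a little careful that the extra boundary jump is measured through the map $E_b^\sigma$ of Lemma~\ref{lem:RP} and weighted by $K$ in $\boldsymbol{V}_t$, but Lemma~\ref{lem:barK} (in the one-incident-wave form recorded in the remark after it) shows this only costs a further constant factor, which is absorbed into $C$.

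For the second inequality I would use the equivalence $\frac1{C_3}\norma{v-u}_{\L1}\le\boldsymbol{\Phi}_t(u,v)\le C_3\norma{v-u}_{\L1}$ together with the Lipschitz estimate~\ref{it:Lipschitz}) of Theorem~\ref{thm:SRS} applied with the \emph{same} boundary curve and boundary data and with $\omega=0$, which yields $\norma{P(t,t_o)u-P(t,t_o)\tilde u}_{\L1}\le L\,\norma{u-\tilde u}_{\L1}$. The additive terms satisfy
\begin{displaymath}
  \norma{\,t\,G(P(t,t_o)u)-t\,G(P(t,t_o)\tilde u)\,}_{\L1}
  \le
  t\,L_1\,\norma{P(t,t_o)u-P(t,t_o)\tilde u}_{\L1}
  \le
  t\,L_1 L\,\norma{u-\tilde u}_{\L1}
\end{displaymath}
by the Lipschitz part of~\textbf{($\boldsymbol{G}$)}, while the two indicator functions coincide (same $\gamma$, same $t$), so no boundary term appears. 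Hence
\begin{displaymath}
  \norma{\check F(t,t_o)u-\check F(t,t_o)\tilde u}_{\L1}
  \le
  (L+t\,L_1L)\,\norma{u-\tilde u}_{\L1}.
\end{displaymath}
The obstacle here is that this only gives a constant $L$, not $1+Ct$, in front of $\norma{u-\tilde u}_{\L1}$: the crude $\L1$ estimate loses the sharp factor. The remedy — and the real content of the lemma — is to run the computation at the level of the functional $\boldsymbol{\Phi}_t$ itself, exactly as in the proof of Proposition~\ref{prop:theorem44}: write $\frac d{dt}\boldsymbol{\Phi}_{t_o+t}(\check F(t,t_o)u,\check F(t,t_o)\tilde u)$ as a sum of flux terms over the jump set plus a boundary term, observe that the boundary term is $\le0$ by~\textbf{($\boldsymbol{\gamma}$)} and~\eqref{eq:Csuc} as before, that the interior sum is $\le0$ at interaction times by the decrease of $\boldsymbol{\Upsilon}$ built into the weights $\boldsymbol{W}_i$, and that the \emph{only} positive contribution comes from differentiating the added source term $t\,G$, which by~\textbf{($\boldsymbol{G}$)} is bounded by $C\,\boldsymbol{\Phi}_{t_o+t}(\cdot,\cdot)$ (using again the equivalence with the $\L1$ distance and $\norma{G(u)-G(\tilde u)}_{\L1}\le L_1\norma{u-\tilde u}_{\L1}$).

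So the main step is a Gronwall-type argument: one obtains, for small $t$, the differential inequality $\frac d{dt}\boldsymbol{\Phi}_{t_o+t}\le C\,\boldsymbol{\Phi}_{t_o+t}$ away from interaction times, with $\boldsymbol{\Phi}$ non-increasing across interaction times, whence $\boldsymbol{\Phi}_{t_o+t}(\check F(t,t_o)u,\check F(t,t_o)\tilde u)\le e^{Ct}\,\boldsymbol{\Phi}_{t_o}(u,\tilde u)\le(1+Ct)\,\boldsymbol{\Phi}_{t_o}(u,\tilde u)$ for $t$ small, which is~\eqref{eq:Bold}. The hard part, as usual in these weighted-functional arguments, is checking that introducing the perturbation $t\,G(\cdot)$ (which is \emph{not} itself a solution of the conservation law, and whose jumps are located wherever $P(t,t_o)u$ has jumps) does not spoil the sign of the jump-by-jump terms $E_{y,i}$ in the decomposition of $\frac d{dt}\boldsymbol{\Phi}$; but this is precisely the situation already handled in Proposition~\ref{prop:theorem44} with the generic error term $\omega$, here specialized to $\omega=t\,G(\cdot)\,\chi_{[\gamma(t_o+t),+\infty[}$, whose total variation is $O(t)$. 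Everything else is the bookkeeping of the constants $\bar K,\kappa_1,\kappa_2$ fixed in Section~\ref{sec:TechConv}, which are unaffected by the source.
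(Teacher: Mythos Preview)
The paper does not give its own proof of this lemma; it only cites \cite[Lemma~3.6 and Corollary~3.7]{ColomboGuerra} and \cite[Lemma~2.3]{ColomboGuerra3}. So the comparison is between your plan and what those references (and the structure of the present paper) require.

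Your argument for the first inequality is correct and is the standard one: monotonicity of $\boldsymbol{\Upsilon}$ under $P$, then a static estimate on the effect of adding $t\,G(P(t,t_o)u)\,\chi$, using $\tv(G(u))\le L_2$ (hence also $\|G(u)\|_{\L\infty}\le L_2$) so that both $\boldsymbol{V}$ and $\boldsymbol{Q}$ change by $O(t)$.

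For the second inequality your overall strategy is right, but the framing as a differential inequality for $t\mapsto\boldsymbol{\Phi}_{t_o+t}\bigl(\check F(t,t_o)u,\check F(t,t_o)\tilde u\bigr)$ does not work as written. The curve $t\mapsto\check F(t,t_o)u$ is \emph{not} an $\epsilon$-approximate solution (nor any solution of a conservation law): the added term $t\,G(P(t,t_o)u)$ has jumps that do not travel with characteristic speeds, so there is no meaningful decomposition of $\frac{d}{dt}\boldsymbol{\Phi}$ into the flux terms $E_{y,i}$ of Proposition~\ref{prop:theorem44}, and the Gronwall step you describe has no object to act on. The invocation of Proposition~\ref{prop:theorem44} ``with $\omega=t\,G(\cdot)\,\chi$'' is also off target: in that proposition $u$ and $v$ are both $\epsilon$-approximate solutions and $\omega$ is added to only one of them, whereas here both arguments carry a source correction.

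The clean route (and the one in the cited references) is a two-step \emph{static} decomposition rather than a time derivative. First, pass to the limit in Proposition~\ref{prop:theorem44} with $\omega=0$ and $g=\bar g$, using the lower-semicontinuous construction of $\boldsymbol{\Phi}_t$ in \cite[Section~3]{ColomboGuerra2}, to obtain
\[
\boldsymbol{\Phi}_{t_o+t}\bigl(P(t,t_o)u,\,P(t,t_o)\tilde u\bigr)\le\boldsymbol{\Phi}_{t_o}(u,\tilde u).
\]
Second, prove the purely algebraic perturbation estimate
\[
\boldsymbol{\Phi}_\tau(a+\omega_1,\,b+\omega_2)\le\boldsymbol{\Phi}_\tau(a,b)+C\,\|\omega_1-\omega_2\|_{\L1}+C\bigl(\tv(\omega_1)+\tv(\omega_2)\bigr)\,\boldsymbol{\Phi}_\tau(a,b),
\]
which follows by expanding the $q_i$ and the weights $\boldsymbol{W}_i$: the change in $q_i(x)$ is controlled pointwise by $\|\omega_1(x)-\omega_2(x)\|$, while the change in $\boldsymbol{A}_i$ and in $\boldsymbol{\Upsilon}_\tau$ is controlled by $\tv(\omega_1)+\tv(\omega_2)$. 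Applying this at $\tau=t_o+t$ with $a=P(t,t_o)u$, $b=P(t,t_o)\tilde u$, $\omega_j=t\,G(\cdot)\,\chi$ and using the Lipschitz part of~\textbf{($\boldsymbol{G}$)} together with the equivalence~(\ref{eq:equiv}) gives $\|\omega_1-\omega_2\|_{\L1}\le Ct\,\boldsymbol{\Phi}_{t_o+t}(a,b)$ and $\tv(\omega_j)\le Ct$, whence~(\ref{eq:Bold}). Your ingredients are the right ones; only the packaging as a time derivative needs to be replaced by this two-step splitting.
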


The proof is as that of~\cite[Lemma~3.6 and
Corollary~3.7]{ColomboGuerra}, see
also~\cite[Lemma~2.3]{ColomboGuerra3}.

\begin{corollary}
  For $t$ small, $\check F$ in~(\ref{eq:Flow}) satisfies $\check F(t,
  t_o) \hat {\mathcal{D}}_{t_o}^M \subseteq \hat
  {\mathcal{D}}_{t_o+t}^M$.
\end{corollary}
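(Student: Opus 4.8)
The plan is to check, one by one, that each of the three conditions defining $\hat{\mathcal{D}}^M_{t_o+t}$ is inherited by $\check F(t,t_o)u$ when $u\in\hat{\mathcal{D}}^M_{t_o}$ and $t$ is small. The support condition is immediate: by Theorem~\ref{thm:SRS} the function $P(t,t_o)u$ vanishes for $x\le\gamma(t_o+t)$, while in~(\ref{eq:Flow}) the source term carries the factor $\chi_{[\gamma(t_o+t),+\infty[}$, so $\check F(t,t_o)u$ vanishes for $x<\gamma(t_o+t)$. For the bound on $\boldsymbol{\Upsilon}$, first observe that $\tv\bigl(P(t,t_o)u\bigr)$ is controlled by a fixed multiple of $\boldsymbol{\Upsilon}_{t_o}(u)\le\delta$, so $P(t,t_o)u\in\mathcal{U}_{\delta_o}$ once $\delta<\delta_o$; hence $G\bigl(P(t,t_o)u\bigr)$ is well defined, and by~($\boldsymbol{G}$) it has total variation at most $L_2$. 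Then the first inequality of Lemma~\ref{lem:Stime}, together with $\boldsymbol{\Upsilon}_{t_o}(u)\le\delta-C(T-t_o)$, gives
\begin{displaymath}
  \boldsymbol{\Upsilon}_{t_o+t}\bigl(\check F(t,t_o)u\bigr)
  \ \le\ \boldsymbol{\Upsilon}_{t_o}(u)+Ct
  \ \le\ \delta-C\bigl(T-(t_o+t)\bigr),
\end{displaymath}
which is exactly the required inequality as soon as $C$ is at least the constant of Lemma~\ref{lem:Stime}.

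The remaining and slightly more delicate condition is the $\L1$ bound. Writing $w=P(t,t_o)u$ and combining~(\ref{eq:Flow}) with $\norma{G(w)}_{\L1}\le\norma{G(0)}_{\L1}+L_1\norma{w}_{\L1}$ from~($\boldsymbol{G}$), one gets
\begin{displaymath}
  \norma{\check F(t,t_o)u}_{\L1}
  \ \le\ \norma{w}_{\L1}+t\,\norma{G(w)}_{\L1}
  \ \le\ (1+L_1 t)\,\norma{P(t,t_o)u}_{\L1}+C_0 t ,
\end{displaymath}
with $C_0$ a constant depending only on $f$, $G$, $b$. I would then feed in the a priori estimate $\norma{P(t,t_o)u}_{\L1}\le\norma{u}_{\L1}+C_0 t$ for the homogeneous process, which comes out of the wave front tracking construction of Section~\ref{sec:TechConv}: in $\{x>\gamma(t)\}$ the evolution is conservative, while through $x=\gamma(t)$ the $\L1$ mass varies at a rate bounded by $\norma{\dot\gamma}_{\L\infty}\,\norma{u}_{\L\infty}\le C_0\,\tv(u)\le C_0\delta$ (and, crucially, this rate does not blow up as $c\to 0$). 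Inserting $\norma{u}_{\L1}\le Me^{Ct_o}+Ct_o$, which holds since $u\in\hat{\mathcal{D}}^M_{t_o}$, and using $e^{Ct}\ge 1+Ct$, the target inequality $\norma{\check F(t,t_o)u}_{\L1}\le Me^{C(t_o+t)}+C(t_o+t)$ reduces to an elementary estimate in which the only cross term, of size $O(t\,t_o)$, is absorbed using $t_o\le T$; it holds for all $t_o\in[0,T]$ and $t$ small provided $C$ is taken large (in particular $C\ge L_1$, so $e^{Ct}-1-L_1 t\ge 0$, and $C$ a sufficiently large multiple of $C_0$) and $T$ is then small (say $L_1 T\le 1/2$). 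Note that no restriction on $M$ is needed.

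Hence the statement follows by the standard ordering of the constants — first fix $C$ large (dominating $L_1$, $L_2$, the $\boldsymbol{\Upsilon}$-growth constant of Lemma~\ref{lem:Stime} and the $\L1$-growth constant $C_0$ of the homogeneous process), then $\delta\in\left]0,\delta_o\right[$ small (as also required in Proposition~\ref{prop:theorem44}, and so that $T\le\delta/C$ is admissible), and finally $T$ small — exactly as in~\cite[Corollary~3.7]{ColomboGuerra}. The main obstacle is precisely this bookkeeping, together with the need for the linear-in-$t$ growth of the $\L1$ norm along the homogeneous process $P$, which is not recorded explicitly in Theorem~\ref{thm:SRS} but is part of its proof.
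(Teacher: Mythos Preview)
Your argument is correct and follows the same three-step template as the paper: check the support, the $\boldsymbol{\Upsilon}$ bound via Lemma~\ref{lem:Stime}, and the $\L1$ bound. The only real difference is in how you obtain the linear-in-$t$ growth of $\norma{P(t,t_o)u}_{\L1}$. You go back to the wave front tracking construction and argue via a flux bound at the boundary, taking care that the constant be $c$-independent. The paper instead simply writes $\norma{P(t,t_o)u-u}_{\L1}\le Ct$, which is \emph{already} contained in Theorem~\ref{thm:SRS}: take~\emph{\ref{it:Lipschitz})} with $\omega=0$, $v=u$, $t'=0$, $t_o'=t_o$, $\bar\gamma=\gamma$, $\bar g=g$. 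So your statement that this estimate ``is not recorded explicitly in Theorem~\ref{thm:SRS}'' is not accurate, and your detour through conservation and boundary flux --- while valid once one telescopes $\frac{d}{dt}\int|u^\epsilon|\,dx$ and bounds it by $\hat\lambda\,\tv(u)$ --- is unnecessary. The $c$-independence you insist on is also irrelevant here, since $c$ is fixed throughout Section~\ref{sec:Source} and the constant $C$ in the definition of $\hat{\mathcal{D}}^M_t$ is allowed to depend on it. On the other hand, your more explicit bookkeeping of the cross term $C^2 t\,t_o$ in the final inequality (absorbed via $t_o\le T$ and $T$ small) is a point the paper glosses over.
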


\begin{proof}
  The bound on $\boldsymbol{\Upsilon}_t$ is a direct consequence of
  Lemma~\ref{lem:Stime}. Concerning the estimate on the $\L1$ norm,
  for $u \in \hat{\mathcal{D}}^M_{t_o}$, compute:
  \begin{eqnarray*}
    & &
    \norma{\check F(t,t_o)u}_{\L1}
    \\
    & = &
    \norma{
      P(t,t_o) u 
      + 
      t G \left(P(t,t_o)u \right)\chi_{\strut[\gamma(t_o+t), +\infty[}
    }_{\L1}
    \\
    & \leq &
    \norma{P(t,t_o)u - u}_{\L1}
    +
    \norma{u}_{\L1}
    +
    t \norma{G\left( P(t,t_o)u\right) - G(0)}_{\L1}
    +
    t \norma{G(0)}_{\L1}
    \\
    & \leq &
    Ct+ \norma{u}_{\L1} + C t \left( \norma{u}_{\L1} + Ct \right) + Ct
    \\
    & \leq &
    (1+Ct) \, \norma{u}_{L1} + Ct
    \\
    & \leq &
    (1+Ct) \, \left( M e^{Ct_o} + Ct_o\right) + Ct
    \\
    & \leq &
    M e^{C(t_o+t)} + C(t_o+t)
  \end{eqnarray*}
  hence $\check F(t,t_o)u$ is in $\hat{\mathcal{D}}^M_{t_o+t}$.
\end{proof}

In what follows, relying
on~\cite[Condition~\textbf{(D)}]{ColomboGuerra4}, we consider $\check
F$ as defined on the domains $\hat{\mathcal{D}}_{t_o}^M$ and not on a
single domain, as in~\cite[Definition~2.1]{ColomboGuerra4}.

\begin{proposition}
  \label{prop:M}
  The map $\check F$ defined in~(\ref{eq:Flow}) is $\L1$ Lipschitz
  continuous, satisfies $\check F\left(0,t_o\right)u = u$ for any
  $(t_o,u)\in \left\{(\tau,w) \colon \tau \in [0,T], \ w \in
    \hat{\mathcal{D}}_\tau^M \right\}$ and there exist positive
  $\mathcal{L}$, independent from $M$, such that for $t_o,t_o' \in
  [0,T]$, $t \in [0,T-t_o]$, $t' \in [0, T-t_o']$, $u \in
  \hat{\mathcal{D}}^M_{t_o}$, $u' \in \hat{\mathcal{D}}^M_{t_o'}$
  \begin{displaymath}
    \norma{ \check F(t',t_o')u'- \check F(t,t_o) u}_{\L1}
    \leq
    \mathcal{L} \left(
      \norma{u'-u}_{\L1}
      + 
      \left( 1+ \norma{u}_{\L1} \right) \modulo{t' - t} 
      +
      \modulo{t_o'-t_o}
    \right).
  \end{displaymath}
\end{proposition}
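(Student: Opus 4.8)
The plan is to unwind the definition~(\ref{eq:Flow}) of $\check F$ and treat the two pieces of $\check F(t',t_o')u'-\check F(t,t_o)u$ separately. The identity $\check F(0,t_o)u=u$ is immediate: by~\ref{it:semigroup}) of Theorem~\ref{thm:SRS} one has $P(0,t_o)u=u$, and the factor $t$ kills the source term. For the Lipschitz estimate, write $\check F(t',t_o')u'-\check F(t,t_o)u$ as the sum of the \emph{convective part} $P(t',t_o')u'-P(t,t_o)u$ and the \emph{source part} $t'\,G(P(t',t_o')u')\,\chi_{[\gamma(t_o'+t'),+\infty[}-t\,G(P(t,t_o)u)\,\chi_{[\gamma(t_o+t),+\infty[}$; the $\L1$ Lipschitz continuity of $\check F$ in its first argument is then the special case $t=t'$, $t_o=t_o'$ of the resulting inequality.

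For the convective part, invoke~\ref{it:Lipschitz}) of Theorem~\ref{thm:SRS} with $\bar P=P$ (same boundary curve $\gamma$ and datum $g$) and $\omega=0$ — legitimate since, for $\delta$ small, the elements of $\hat{\mathcal{D}}^M_{t_o}$ have total variation small enough to lie in the domains of that theorem. This gives
\[
\norma{P(t',t_o')u'-P(t,t_o)u}_{\L1}
\leq
L\left(\norma{u'-u}_{\L1}+\modulo{t'-t}+\modulo{t_o'-t_o}\right).
\]
Applying the same estimate to compare $P(\tau,\tau_o)w$ with $P(0,\tau_o)w=w$ also gives $\norma{P(\tau,\tau_o)w-w}_{\L1}\le L\tau\le LT$, whence $\norma{P(\tau,\tau_o)w}_{\L1}\le LT+\norma{w}_{\L1}$, for every admissible $(\tau,\tau_o,w)$.

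For the source part I would insert and subtract intermediate terms to isolate the three sources of variation — the scalar factors $t,t'$, the two characteristic functions, and the two arguments of $G$. Writing $G_1=G(P(t',t_o')u')$, $G_2=G(P(t,t_o)u)$ and $\chi_j$ for the corresponding characteristic functions, decompose $t'G_1\chi_1-tG_2\chi_2=(t'-t)\,G_2\,\chi_2+t'\bigl(G_1(\chi_1-\chi_2)+(G_1-G_2)\chi_2\bigr)$. The first term has $\L1$ norm at most $\modulo{t'-t}\,\norma{G_2}_{\L1}$; since $\norma{G(v)}_{\L1}\le L_1\norma{v}_{\L1}+\norma{G(0)}_{\L1}$ by~($\boldsymbol{G}$) and $\norma{P(t,t_o)u}_{\L1}\le LT+\norma{u}_{\L1}$, this is $\le C\,(1+\norma{u}_{\L1})\,\modulo{t'-t}$ — which is exactly where the weight $(1+\norma{u}_{\L1})$ on $\modulo{t'-t}$ enters. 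The term $G_1(\chi_1-\chi_2)$ is supported on an interval of length at most $\modulo{\gamma(t_o'+t')-\gamma(t_o+t)}\le\mathrm{Lip}(\gamma)\,(\modulo{t_o'-t_o}+\modulo{t'-t})$, and since $G_1\in\BV\cap\L1$ tends to $0$ at $-\infty$ one has $\norma{G_1}_{\L\infty}\le\tv(G_1)\le L_2$ by~($\boldsymbol{G}$), so this term contributes $\le C\,T\,(\modulo{t_o'-t_o}+\modulo{t'-t})$. Finally $(G_1-G_2)\chi_2$ has $\L1$ norm $\le L_1\norma{P(t',t_o')u'-P(t,t_o)u}_{\L1}$, which is controlled by the convective estimate. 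Collecting all contributions and absorbing $\modulo{t'-t}\le T$, $\modulo{t_o'-t_o}\le T$ where convenient yields the claim with a constant $\mathcal{L}$ depending only on $L,L_1,L_2,T$, the Lipschitz constant of $\gamma$ and $\norma{G(0)}_{\L1}$, hence independent of $M$ (which enters the domains $\hat{\mathcal{D}}^M_{t_o}$ only through $\norma{u}_{\L1}$, left explicit throughout).

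The main obstacle is really the bookkeeping in the source part: making sure the weight $(1+\norma{u}_{\L1})$ attaches to $\modulo{t'-t}$ and not to $\modulo{t_o'-t_o}$, and checking that all the cross terms (e.g.\ $\modulo{t'-t}\,\norma{u'-u}_{\L1}$) are innocuous because $t,t'$ vary in the bounded interval $[0,T]$. The one ingredient that deserves to be spelled out is the uniform bound $\norma{G(v)}_{\L\infty}\le L_2$ for $v\in\mathcal{U}_{\delta_o}$, which follows from $G(v)\in\BV\cap\L1$ having limit $0$ at $-\infty$; this is what lets one control the contribution of the shift in the characteristic function.
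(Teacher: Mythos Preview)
Your proof is correct and follows essentially the same route as the paper: split $\check F(t',t_o')u'-\check F(t,t_o)u$ into the convective part (controlled by~\ref{it:Lipschitz}) of Theorem~\ref{thm:SRS} with $\omega=0$) and the source part, then telescope the source part to isolate the contributions of the scalar factor, the characteristic functions, and the argument of $G$, using~($\boldsymbol{G}$), the Lipschitz continuity of $\gamma$, and the bound $\norma{P(\tau,\tau_o)w}_{\L1}\le C+\norma{w}_{\L1}$. The only cosmetic difference is that you pivot on $G_2\chi_2$ whereas the paper pivots on $G_1\chi_1$, which makes the weight $(1+\norma{u}_{\L1})$ appear directly rather than via $(1+\norma{u'}_{\L1})$; and you spell out the justification for $\norma{G(v)}_{\L\infty}\le L_2$, which the paper uses implicitly.
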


\begin{proof}
  Compute:
  \begin{eqnarray*}
    & &
    \norma{\check F(t', t_o')u' - \check F (t, t_o) u}_{\L1}
    \\
    & \leq &
    \norma{P(t',t_o')u' - P(t,t_o)u}_{\L1}
    +
    \modulo{t'-t} 
    \norma{G \left( P(t',t_o')u' \right) \chi_{\strut[\gamma(t_o'+t'),
        +\infty[}}_{\L1}
    \\
    & &
    +
    t \norma{
      G\left(P(t',t_o')u'\right) \chi_{\strut[\gamma(t_o'+t'), +\infty[}
      -
      G\left(P(t,t_o)u\right) \chi_{\strut[\gamma(t_o+t), +\infty[}
    }_{\L1}
    \\[8pt]
    & \leq &
    \norma{P(t',t_o')u' - P(t,t_o)u}_{\L1}
    \\
    & &
    +
    \modulo{t'-t} 
    \left( 
      \norma{G \left( P(t',t_o')u' - G(0)\right)}_{\L1}
      +
      \norma{G(0)}_{\L1}
    \right)
    \\
    & &
    +
    t \norma{
      G\left(P(t',t_o')u'\right)
      -
      G\left(P(t,t_o)u\right)
    }_{\L1}
    \\
    & &
    + 
    t \norma{
      G\left( P(t,t_o)u\right)
      \left(
        \chi_{\strut[\gamma(t_o'+t'),+\infty[}
        -
        \chi_{\strut[\gamma(t_o+t),+\infty[}
      \right)
    }_{\L1}
    \\[8pt]
    & \leq &
    (1+Ct) \, \norma{P(t',t_o')u' - P(t,t_o)u}_{\L1}
    +
    \modulo{t'-t} \, \norma{G(0)}_{\L1}
    \\
    & &
    +
    C \, \modulo{t'-t} \, \norma{P(t',t_o')u'}_{\L1}
    +
    C \, t \, \modulo{\gamma(t_o'+t') - \gamma(t_o+t)}
    \\[8pt]
    &\leq &
    C \left( \modulo{t'-t} + \modulo{t_o'-t_o} + \norma{u'-u}_{\L1}
    \right)
    \\
    & &
    +
    C \modulo{t'-t} \left( \norma{P(t',t_o')u' - u'}_{\L1} +
      \norma{u'}_{\L1} \right)
    \\[8pt]
    & \leq &
    C \left(
      \left( 1+\norma{u'}_{\L1} \right) \modulo{t'-t} 
      +
      \modulo{t_o'-t_o} + \norma{u'-u}_{\L1}
    \right)
  \end{eqnarray*}
  completing the proof.
\end{proof}

Recall~\cite[Definition~2.3]{ColomboGuerra4}: an Euler
$\epsilon$-polygonal is
\begin{equation}
  \label{eq:polygonal}
  \check F^\epsilon (t,t_o) \, u
  =
  \check F(t-k\epsilon, t_o+k\epsilon) \circ 
  \comp_{h=0}^{k-1} \check F(\epsilon, t_o+h\epsilon) \, u
\end{equation}
for $k=[t/\epsilon]$. Above and in what follows, we denote the
recursive composition $\comp_{i=1}^n f_i = f_1 \circ f_2 \circ \ldots
\circ f_n$. Here, $[\,\cdot\,]$ stands for the integer part, i.e.~for
$s \in \reali$, $[s] = \max \{k \in \interi \colon k \leq s \}$.

The hypotheses to apply~\cite[Theorem~2.6]{ColomboGuerra4} are
satisfied.

\begin{proposition}
  \label{prop:yes}
  The local flow $\check F$ in~(\ref{eq:Flow}) is such that there
  exist
  \begin{enumerate}
  \item \label{it:first} a positive constant $C$ such that for all
    $t_o \in [0,T]$ and all $u \in \hat{\mathcal{D}}_{t_o}^M$
    \begin{displaymath}
      d \left(
        \check F ( k \tau, t_o+\tau) \circ \check F(\tau,t_o) u, 
        \check F \left( (k+1) \tau, t_o \right)u
      \right)
      \leq
      C \, k\, \tau^2
    \end{displaymath}
    whenever $k\in \naturali$, $(k+1)\tau, \tau \in [0,T-t_o]$;
  \item \label{it:second} a positive constant $L$ such that
    \begin{displaymath}
      d \left( \check F^\epsilon (t,t_o) u, \check F^\epsilon (t,t_o) w \right)
      \leq L \cdot d(u,w)
    \end{displaymath}
    whenever $\epsilon \in \left]0, \delta\right]$, $u, w \in
    \hat{\mathcal{D}}_{t_o}^M$, $t\geq 0$ and $t_o,t_o+t\in [0,T]$.
  \end{enumerate}
\end{proposition}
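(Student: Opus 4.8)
The plan is to read off both estimates from the splitting of $\check F$ in~(\ref{eq:Flow}) into its convective part plus the source increment, namely $\check F(s,t_o)u = P(s,t_o)u+\omega_s$ with $\omega_s:=s\,G\!\left(P(s,t_o)u\right)\,\chi_{\strut\left[\gamma(t_o+s),+\infty\right[}$, combined with the Lipschitz estimate~\ref{it:Lipschitz}) of Theorem~\ref{thm:SRS} and the contraction estimates of Lemma~\ref{lem:Stime}. First I would record the elementary bounds valid on the bounded domain $\hat{\mathcal{D}}_{t_o}^M$: by~\textbf{($\boldsymbol{G}$)}, $G(w)\in\L1$ with $\tv\left(G(w)\right)\leq L_2$, hence $\norma{G(w)}_{\L\infty}\leq L_2$ as for any $\L1\cap\BV$ function; together with $\norma{P(s,t_o)u}_{\L1}\leq\norma{u}_{\L1}+Cs$ this yields $\tv(\omega_s)\leq C s$ and $\norma{\omega_s}_{\L1}\leq C s$, where $C$ is allowed to depend on $M$ and $T$. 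I would also use that, by~\ref{it:Lipschitz}) taken with $\omega=0$ and all data unchanged, the convective process is Lipschitz in time, $\norma{P(s,t_o)v-v}_{\L1}\leq Ls$.

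For~\ref{it:first}) I would split off the first step. Put $w:=\check F(\tau,t_o)u = P(\tau,t_o)u+\omega_\tau$, $P_{k+1}:=P\!\left((k+1)\tau,t_o\right)u$ and $Q:=P(k\tau,t_o+\tau)w$. By the process property~\ref{it:semigroup}), $P(k\tau,t_o+\tau)\circ P(\tau,t_o)=P((k+1)\tau,t_o)$, so applying~\ref{it:Lipschitz}) of Theorem~\ref{thm:SRS} \emph{with the perturbation $\omega=\omega_\tau$} (same boundary curve and data, $t=t'=k\tau$, $t_o=t_o'=t_o+\tau$) gives
\begin{displaymath}
  \norma{Q-P_{k+1}-\omega_\tau}_{\L1}
  \leq
  L \, (k\tau) \, \tv(\omega_\tau)
  \leq
  C \, k\tau^2
  \,.
\end{displaymath}
Subtracting the two defining formulas, the difference $\check F(k\tau,t_o+\tau)w-\check F((k+1)\tau,t_o)u$ is the sum of $\left(Q-P_{k+1}-\omega_\tau\right)$, of $k\tau\left(G(Q)-G(P_{k+1})\right)\chi_{\strut\left[\gamma(t_o+(k+1)\tau),+\infty\right[}$, and of $\omega_\tau-\tau\,G(P_{k+1})\,\chi_{\strut\left[\gamma(t_o+(k+1)\tau),+\infty\right[}$. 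The first term is $O(k\tau^2)$ by the display; the second has $\L1$ norm $\leq k\tau\,L_1\norma{Q-P_{k+1}}_{\L1}\leq k\tau\,L_1\left(\norma{\omega_\tau}_{\L1}+Ck\tau^2\right)=O(k\tau^2)$, using $k\tau\leq T$; and the third, after pulling out $\tau$ and separating the mismatch of the $G$-arguments from the mismatch of the two cut-off intervals, is bounded by $\tau L_1\norma{P(\tau,t_o)u-P_{k+1}}_{\L1}+\tau\norma{G(P_{k+1})}_{\L\infty}\,\modulo{\gamma(t_o+\tau)-\gamma(t_o+(k+1)\tau)}$, which is $O(k\tau^2)$ thanks to the time-Lipschitz bound on $P$ (recall $P_{k+1}=P(k\tau,t_o+\tau)P(\tau,t_o)u$) and to~\textbf{($\boldsymbol{\gamma}$)}. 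Adding the three contributions proves~\ref{it:first}).

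For~\ref{it:second}) I would iterate the contraction estimate~(\ref{eq:Bold}) of Lemma~\ref{lem:Stime} along the $k+1$ factors of the Euler polygonal~(\ref{eq:polygonal}), $k=[t/\epsilon]$. By the corollary preceding this proposition every intermediate iterate stays in the appropriate domain $\hat{\mathcal{D}}^M$, so each factor of size $\epsilon$ — and the last, of size $t-k\epsilon<\epsilon$ — multiplies $\boldsymbol{\Phi}$ by a factor $1+C\epsilon$ (respectively $1+C(t-k\epsilon)$). Hence $\boldsymbol{\Phi}_{t_o+t}\left(\check F^\epsilon(t,t_o)u,\check F^\epsilon(t,t_o)w\right)\leq e^{Ct}\,\boldsymbol{\Phi}_{t_o}(u,w)\leq e^{CT}\,\boldsymbol{\Phi}_{t_o}(u,w)$, and the two-sided equivalence $\frac{1}{C_3}\norma{\cdot}_{\L1}\leq\boldsymbol{\Phi}\leq C_3\norma{\cdot}_{\L1}$ turns this into $d\left(\check F^\epsilon(t,t_o)u,\check F^\epsilon(t,t_o)w\right)\leq C_3^2\,e^{CT}\,d(u,w)$; so $L:=C_3^2\,e^{CT}$ works and, as required, does not depend on $\epsilon$.

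The only genuinely nonroutine point is the one already flagged: in~\ref{it:first}) one must invoke~\ref{it:Lipschitz}) with the \emph{nonzero} perturbation $\omega_\tau$. With $\omega=0$ the $P$-part of the discrepancy would be controlled only by $\norma{\omega_\tau}_{\L1}=O(\tau)$, which for bounded $k$ is \emph{worse} than the required $O(k\tau^2)$; it is precisely the term $t\cdot\tv(\omega)$ in that estimate, together with $\tv(\omega_\tau)=O(\tau)$, that supplies the missing power of $\tau$. The rest is bookkeeping: verifying that every perturbation in play vanishes to the left of the relevant boundary curve and has small total variation, so that the convective estimates of Theorem~\ref{thm:SRS} and Lemma~\ref{lem:Stime} apply on the correct domains, and keeping separate the constants that may depend on $M$ (all those entering~\ref{it:first})) from those that may not (the constant $L$ in~\ref{it:second}), since $C_3$ is $M$-independent).
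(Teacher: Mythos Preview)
Your proof is correct and follows essentially the same approach as the paper: for~\emph{\ref{it:first})} you invoke~\emph{\ref{it:Lipschitz})} of Theorem~\ref{thm:SRS} with the nonzero perturbation $\omega_\tau=\tau\,G\!\left(P(\tau,t_o)u\right)\chi$ to extract the crucial extra factor of $\tau$, and then estimate the residual source-term mismatches by~\textbf{($\boldsymbol{G}$)} and the time-Lipschitz bound on $P$; for~\emph{\ref{it:second})} you iterate~(\ref{eq:Bold}) and convert via the equivalence with the $\L1$ norm. Your three-term decomposition is in fact slightly cleaner than the paper's presentation, but the ingredients and the logic are identical.
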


\noindent Note that~\emph{\ref{it:first}.} states
that~\cite[1.~in~Theorem~2.6]{ColomboGuerra4} is satisfied with
$\omega(t) = Ct$.

\begin{proof}
  To prove~\emph{\ref{it:first}}., the key property
  is~\emph{\ref{it:Lipschitz})} in Theorem~\ref{thm:SRS}, see
  also~\cite[Proposition~4.9]{ColomboGuerraHertySachers}.
  \begin{eqnarray*}
    & &
    \check F ( k \tau, t_o+\tau) \circ \check F(\tau,t_o) u
    -
    \check F \left( (k+1) \tau, t_o \right)u
    \\
    & = &
    P(k\tau,t_o+\tau) 
    \left( 
      P(\tau,t_o)u + \tau G \left(P(\tau,t_o)u\right)
      \chi_{\left[\gamma(t_o+\tau), +\infty\right[}
    \right)
    \\
    & &
    +
    k\tau \, G \left(
      P(k\tau,t_o+\tau) 
      \left( 
        P(\tau,t_o)u + \tau G \left(P(\tau,t_o)u\right)
        \chi_{\left[\gamma(t_o+\tau), +\infty\right[}
      \right)
    \right)
    \cdot
    \\
    & &
    \quad
    \cdot
    \chi_{\left[\gamma(t_o+(k+1)\tau), +\infty\right[} 
    \\
    & &
    -
    P\left((k+1)\tau,t_o\right)u 
    \\
    & &
    - 
    (k+1)\tau \, G \left( P \left((k+1)\tau,t_o\right) u \right)
    \chi_{\left[\gamma(t_o+(k+1)\tau), +\infty\right[}
    \\
    & = &
    P(k\tau,t_o+\tau) 
    \left( 
      P(\tau,t_o)u + \tau G \left(P(\tau,t_o)u\right)
      \chi_{\left[\gamma(t_o+\tau), +\infty\right[}
    \right)
    \\
    & &
    -
    P(k\tau,t_o+\tau) \circ P(\tau,t_o)u
    - 
    \tau \, G \left( P \left((k+1)\tau,t_o\right) u \right)
    \chi_{\left[\gamma(t_o+(k+1)\tau), +\infty\right[}
    \\
    & &
    +
    k\tau
    \bigl[
    G \left(
      P(k\tau,t_o+\tau) 
      \left( 
        P(\tau,t_o)u + \tau G \left(P(\tau,t_o)u\right)
        \chi_{\left[\gamma(t_o+\tau), +\infty\right[}
      \right)
    \right)
    \\
    & &
    \quad
    - 
    G \left( P(k\tau,t_o+\tau) \circ P(\tau,t_o)u \right)
    \bigr]
    \chi_{\left[\gamma(t_o+(k+1)\tau), +\infty\right[} \,.
  \end{eqnarray*}
  Using~\emph{\ref{it:Lipschitz})} in Theorem~\ref{thm:SRS} in the
  first two lines with $t=t'=k\tau$, $t_o=t_o'$ for $t_o+\tau$,
  $v=P(\tau,t_o)u$, $\omega = \tau G \chi$ and in the latter two
  lines~\textbf{(G)}, ~\emph{\ref{it:Lipschitz})} in
  Theorem~\ref{thm:SRS} with $\omega=0$. We thus get
  \begin{eqnarray*}
    & &
    d \left(
      \check F ( k \tau, t_o+\tau) \circ \check F(\tau,t_o) u, 
      \check F \left( (k+1) \tau, t_o \right)u
    \right)
    \\
    & = &
    \norma{
      \check F ( k \tau, t_o+\tau) \circ \check F(\tau,t_o) u
      -
      \check F \left( (k+1) \tau, t_o \right)u
    }_{\L1}
    \\
    & \leq &
    C \tau \Biggl\|
    G \left(P(\tau,t_o)u\right) \chi_{\left[\gamma(t_o+\tau),
        +\infty\right[}
    \\
    & &
    \qquad
    -
    G \left( P \left((k+1)\tau,t_o \right) u \right)
    \chi_{\left[\gamma(t_o+(k+1)\tau), +\infty\right[}    
    \Biggr\|_{\L1}
    \\
    & &
    +
    C \, k\tau^2 \, \norma{G\left(P(\tau,t_o)u\right)}_{\L1}
    \\
    & \leq &
    C\tau \left( 
      k\tau \, \norma{G}_{\L\infty} \, \norma{\dot\gamma}_{\L\infty}
      +
      C \,k\tau
    \right)
    +
    C \left(\norma{G(0)}_{\L1} + 1+ M\right) k\tau^2
    \\
    & \leq &
    C \, (1+M) \, k\tau^2 \,.
  \end{eqnarray*}
  The bound~\emph{\ref{it:second}.}~is a direct consequence of the
  equivalence~(\ref{eq:equiv}) and~(\ref{eq:Bold}) in
  Lemma~\ref{lem:Stime}, see
  also~\cite[Proposition~4.9]{ColomboGuerraHertySachers}
  and~\cite[formula~(3.1)]{ColomboGuerra}.
\end{proof}

\begin{proofof}{\emph{\ref{it:TV})}, \emph{\ref{it:Process})} and \emph{\ref{it:Tangent})} in Theorem~\ref{thm:main}}
  By~\cite[Theorem~2.5]{ColomboGuerra4}, for any $M$, the local flow
  $\check F$ generates a Lipschitz process $\hat P$ on
  $\mathcal{D}_t^M$. By the characterization of $\hat P$ as limit of
  Euler polygonals, it follows that $\hat P$ is uniquely defined on
  all
  \begin{displaymath}
    \hat{\mathcal{D}}_t
    =
    \bigcup_{M>0} \hat{\mathcal{D}}_t^M
    =
    \left\{
      u \in \L1(\reali;\Omega) \colon
      \begin{array}{l}
        u(x) = 0 \mbox{ for all } x < \gamma(t)
        \\
        \boldsymbol{\Upsilon}_t(u) \leq \delta - C(T-t)
      \end{array}
    \right\} \,.
  \end{displaymath}
  Hence, $\hat P$ satisfies~\emph{\ref{it:Process})} in
  Theorem~\ref{thm:main} and~\emph{\ref{it:TV})} holds.

  To prove~\emph{\ref{it:Tangent})}, note that $\frac{1}{t}
  \norma{\hat F(t.t_o)u - \check F(t,t_o)u}_{\L1} \to 0$ as $t\to 0$,
  for all $u \in \hat {\mathcal{D}}_{t_o}$ and
  apply~\cite[\emph{c)}~in Theorem~2.5]{ColomboGuerra4}.
\end{proofof}

For any $N \in \naturali$, define the operator $\Pi_N \colon
\L1(\reali;\reali^n) \to \PC(\reali;\reali^n)$ by
\begin{displaymath}
  \Pi_N (u)
  =
  N \sum_{k=-1-N^2}^{-1+N^2} \int_{k/N}^{(k+1)/N} u(\xi) \, d\xi \,
  \chi_{\strut \left]k/N, (k+1)/N \right]} \,.
\end{displaymath}

\begin{lemma}\label{lemmaconvergence}
  $\Pi_N$ is a linear operator with norm $1$. Moreover, $\tv \left(
    \Pi_N u \right) \leq 2 \tv(u)$ and for all $u \in
  \L1(\reali;\reali^n) \cap \BV(\reali;\reali^n)$, $\Pi_Nu \to u$ in
  $\L1 (\reali;\reali^n)$.
\end{lemma}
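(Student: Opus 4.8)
The plan is to reduce everything to elementary properties of the \emph{untruncated} averaging operator
$\tilde\Pi_N u = N\sum_{k\in\interi}\Bigl(\int_{k/N}^{(k+1)/N}u(\xi)\,d\xi\Bigr)\chi_{\strut\left]k/N,(k+1)/N\right]}$,
and then to estimate the difference $\Pi_N u-\tilde\Pi_N u$, which vanishes on the interval $\left]a_N,b_N\right]$ with $a_N=(-1-N^2)/N$, $b_N=N$ and is supported outside it, this interval being a union of full grid cells. Linearity of $\Pi_N$ is immediate from linearity of the integral. For the norm, $\Pi_N u$ equals the cell average $N\int_{k/N}^{(k+1)/N}u$ on a cell of length $1/N$, so
\[
\norma{\Pi_N u}_{\L1}
=
\sum_{k}\norma{\int_{k/N}^{(k+1)/N}u(\xi)\,d\xi}
\leq
\sum_{k}\int_{k/N}^{(k+1)/N}\norma{u(\xi)}\,d\xi
\leq
\norma{u}_{\L1},
\]
hence $\norma{\Pi_N}\leq1$; testing on $u=\chi_{\strut\left]0,1/N\right]}$ (times a fixed unit vector), which $\Pi_N$ leaves unchanged, gives $\norma{\Pi_N}=1$.

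For the total variation I would first check $\tv(\tilde\Pi_N u)\leq\tv(u)$: the jump of $\tilde\Pi_N u$ at $k/N$ equals $N\int_0^{1/N}\bigl(u(k/N+s)-u(k/N+s-1/N)\bigr)\,ds$, so summing norms over $k$ and exchanging sum and integral, for a.e.\ shift $s$ the points $k/N+s$ form an increasing sequence and hence $\sum_k\norma{u(k/N+s)-u(k/N+s-1/N)}\leq\tv(u)$; integration over $s\in[0,1/N]$ yields the claim. Since $\Pi_N u=\tilde\Pi_N u\cdot\chi_{\strut\left]a_N,b_N\right]}$ with $\left]a_N,b_N\right]$ a union of cells, multiplication by $\chi_{\strut\left]a_N,b_N\right]}$ adds only the two terminal jumps, so $\tv(\Pi_N u)\leq\tv(\tilde\Pi_N u)+\norma{(\tilde\Pi_N u)(a_N+)}+\norma{(\tilde\Pi_N u)(b_N)}$. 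Each terminal value is an average of $u$ over the extreme cell, hence bounded in norm by the oscillation, indeed by the essential supremum of $\norma u$, there; since $u\in\L1\cap\BV$ forces $u(\pm\infty)=0$, that supremum is at most $\tv\bigl(u;\left]-\infty,a_N+1/N\right]\bigr)$, resp.\ $\tv\bigl(u;\left[b_N-1/N,+\infty\right[\bigr)$. As $a_N+1/N=-N<N-1/N=b_N-1/N$ for every $N\geq1$, these two tails live on disjoint sets, so their variations add up to at most $\tv(u)$, and altogether $\tv(\Pi_N u)\leq 2\,\tv(u)$.

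For the $\L1$ convergence, on each cell $\norma{u(x)-(\tilde\Pi_N u)(x)}$ is bounded by the oscillation of $u$ over that cell, because $(\tilde\Pi_N u)$ is the cell average of $u$; summing over cells, $\norma{\tilde\Pi_N u-u}_{\L1}\leq\frac1N\tv(u)\to0$. Since $\Pi_N u-\tilde\Pi_N u=-\tilde\Pi_N u$ outside $\left]a_N,b_N\right]$, one also has $\norma{\Pi_N u-\tilde\Pi_N u}_{\L1}\leq\norma{u}_{\L1(\reali\setminus\left]a_N,b_N\right])}$, which tends to $0$ by dominated convergence since $a_N\to-\infty$ and $b_N\to+\infty$. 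Adding the two estimates proves $\Pi_N u\to u$ in $\L1$.

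I expect the total variation bound to be the only genuinely delicate step: everything hinges on controlling the two terminal jumps of $\Pi_N u$, and this is exactly where $u\in\L1\cap\BV$ (so that $u$ has vanishing limits at $\pm\infty$) and the width of the support $\left]a_N,b_N\right]$ (so that the two tail variations bounding those jumps live on disjoint sets, keeping the constant at $2$ rather than $3$) come into play. The remaining assertions are routine.
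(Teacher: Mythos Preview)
Your argument is correct in all three parts. The norm computation is routine; the total variation bound via the untruncated operator $\tilde\Pi_N$ together with the observation that the two terminal jumps are controlled by disjoint tail variations (using $u(\pm\infty)=0$ from $u\in\L1\cap\BV$) is clean and gives the constant $2$ as stated; and the $\L1$ convergence follows from the oscillation bound plus the vanishing tail.

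As for comparison with the paper: there is nothing to compare. The paper does not prove this lemma at all; it simply writes ``For the proof, see~[Lemma~3.4, ColomboGuerra]'' and moves on. Your write-up is therefore strictly more informative than what appears here, and the decomposition into $\tilde\Pi_N$ plus a truncation supported on $\reali\setminus\left]a_N,b_N\right]$ is a natural and efficient way to organize the three claims.
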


For the proof, see~\cite[Lemma~3.4]{ColomboGuerra}.  \medskip

\begin{proofof}{Proposition~\ref{prop:curvesSource}}
  Set for simplicity $t_o=0$. Let $\epsilon, \tilde \epsilon > 0$ and
  $N \in \naturali$ be fixed. Consider an $\tilde\epsilon$-approximate
  wave front tracking solution $u^{\epsilon, \tilde\epsilon, N} =
  u^{\epsilon, \tilde\epsilon, N} (t,x)$ to~(\ref{eq:CPHCL}) on the
  time interval $\left[0,\epsilon\right[$. Define it at time
  $t=\epsilon$ setting
  \begin{displaymath}
    u^{\epsilon, \tilde\epsilon, N} (\epsilon, x)
    =
    u^{\epsilon, \tilde\epsilon, N} (\epsilon -,x) 
    + 
    \epsilon \chi_{\strut\left[\gamma(\epsilon),+\infty\right[}(x)
    \left(
      \Pi_N
      G \left(u^{\epsilon, \tilde\epsilon, N}(\epsilon-) \right)
    \right)
    (x) \,.
  \end{displaymath}
  Extend $u^{\epsilon, \tilde\epsilon, N}$ recursively on
  $[0,T]$. Note that
  \begin{displaymath}
    \lim_{\tilde \epsilon \to 0} \lim_{N \to +\infty} 
    u^{\epsilon, \tilde\epsilon, N}(t)
    =
    \check F^\epsilon (t,0) u_o
  \end{displaymath}
  where $\check F^\epsilon$ is defined in~(\ref{eq:polygonal}) and
  $u_o$ is the initial datum in~(\ref{eq:BL}). Note that this is the
  usual operator splitting algorithm.

  Given any curve $\tilde\ell$--non--characteristic curve $\Gamma$
  with support in $\mathbb{D}_\gamma$, define
  \begin{equation}
    \label{eq:Xi}
    \!\!\!
    \begin{array}{rcl}
      \displaystyle
      \Xi^\epsilon(t)
      & = &
      \displaystyle
      \check K
      \left(
        \sum_{x \geq \Gamma(t)} \sum_{i=1}^{\tilde\ell} \modulo{\sigma_{x,i}}
        +
        \sum_{\gamma(t) \leq x \leq \Gamma(t)} \sum_{i=\tilde\ell+1}^{n+1}
        \modulo{\sigma_{x,i}}
        + 
        \hat K \, \Upsilon^\epsilon(t)
      \right)
      \\[15pt]
      & &
      \displaystyle
      +
      \tv \left( 
        u^{\epsilon, \tilde\epsilon,N} \left(\cdot, \Gamma(\cdot)\right);
        [0,t]
      \right)
    \end{array}
  \end{equation}
  for suitable positive $\hat K, \check K$.

  Computations similar to those above allow to prove that
  $\Xi^\epsilon(t+) \leq \Xi^\epsilon (t-)$ for all $t \not \in
  \epsilon \naturali$. Indeed, when a wave crosses $\Gamma$, the
  increase in $\tv(u^{\epsilon, \bar\epsilon, n})$ is compensated by
  the decrease in the first term on the right hand side
  of~(\ref{eq:Xi}).

  At times $t \in \epsilon \naturali$, $\Xi^\epsilon(t)+ \leq
  \Xi^\epsilon(t-) + C \epsilon$. Therefore, for all $t \in [0,T]$,
  $\Xi^\epsilon(t) \leq \Xi^\epsilon(0) + C t$. By~(\ref{eq:Xi}), we
  get that there exists a $C$ dependent only on $u_o$ and $\tilde\ell$
  such that
  \begin{displaymath}
    \tv \left( 
      u^{\epsilon, \tilde\epsilon,N} \left(\cdot, \Gamma(\cdot)\right);
      [0,t]
    \right)
    \leq C \,.  
  \end{displaymath}
  Consider now two $\tilde\ell-$non-characteristic curves
  $\Gamma_1,\Gamma_2$ with support in $\mathbb{D}_\gamma$. The same
  steps in the proof of Lemma~\ref{lem:curves} lead to
  \begin{displaymath}
    \int_0^T 
    \norma{
      u^{\epsilon,\tilde\epsilon,N}\left(t,\Gamma_1(t)+\right)
      -
      u^{\epsilon,\tilde\epsilon,N}\left(t,\Gamma_2(t)+\right)}
    \, dt
    \leq
    \frac{C}{c} \cdot \norma{\Gamma_2 - \Gamma_1}_{\C0} \,.
  \end{displaymath}
  Let now $\tilde\epsilon \to 0$ and $N \to +\infty$, with the same
  technique of Proposition~\ref{prop:curves} we obtain
  \begin{equation}
    \label{eq:checkF}
    \!\!\!
    \int_0^T 
    \norma{
      \left[ \check F^\epsilon(t,0) u\right] \!\!\left(\Gamma_1(t)+\right)
      -
      \left[ \check F^\epsilon(t,0) u\right] \!\!\left(\Gamma_2(t)+\right)}
    dt
    \leq
    \frac{C}{c} \norma{\Gamma_2 - \Gamma_1}_{\C0}
    \!\!\!
  \end{equation}
  with a constant $C$ that now depends also on $T$ and on $L_2$
  in~\textbf{(G)}. Let now also $\epsilon \to 0$ and, by the
  $\Lloc1(\reali^2;\Omega)$ convergence of the Euler polygonals,
  obtain as in Proposition~\ref{prop:curves} that
  \begin{equation}
    \label{eq:PLip}
    \!\!\!
    \int_0^T 
    \norma{
      \left( \hat P(t,0) u\right) \left(\Gamma_1(t)+\right)
      -
      \left( \hat P(t,0) u\right) \left(\Gamma_2(t)+\right)}
    dt
    \leq
    \frac{C}{c} \norma{\Gamma_2 - \Gamma_1}_{\C0}
    \!\!
  \end{equation}
  completing the proof.
\end{proofof}

\begin{proofof}{\emph{\ref{it:Hard})} and~\emph{\ref{it:bdr})} in
    Theorem~\ref{thm:main}}
  The Lipschitz continuity upon the initial data is a consequence
  of~\cite[\emph{b)}~in Theorem~2.5]{ColomboGuerra4}, thanks to
  Proposition~\ref{prop:yes}. The dependence of the Lipschitz constant
  for the variable $t$ on the $\L1$ norm of the initial data is shown
  in Proposition~\ref{prop:M}.

  The Lipschitz conditions~(\ref{eq:checkF}) and~(\ref{eq:PLip}) allow
  to prove the $\L1\left([0,T];\Omega\right)$ convergence of the
  traces as in Lemma~\ref{lemma:lemma46}:
  \begin{displaymath}
    \left(\check F^\epsilon (\cdot,0) u\right) \left(\Gamma(\cdot)+ \right)
    \to
    \left( P(\cdot,0) u\right) \left(\Gamma(\cdot)+ \right)
    \quad \mbox{ in } \quad \L1 ([0,T];\Omega) \,.
  \end{displaymath}
  The map $t \to \check F^\epsilon(t,0)u$ satisfies for a.e.~$t$ the
  boundary condition, hence the same does the solution $t \to
  P(t,0)u$, proving~2.~in Definition~\ref{def:sol}. Condition~3.~in
  the same definition is proved using the tangency
  condition~\ref{it:tangent}), as
  in~\cite[Corollary~3.14]{ColomboGuerra}.

  We are left to prove the Lipschitz dependence from the boundary and
  the boundary data.  To this aim, introduce two boundaries $\gamma$
  and $\bar \gamma$, with $\gamma \leq \bar\gamma$ and boundary data
  $g$, $\bar g$. Let $\hat{\mathcal{D}}_t$,
  ${\bar{\hat{\mathcal{D}}}}_t$, $\hat P^{g,\gamma}(t,t_o)$ and $\hat
  P^{\bar g, \bar\gamma}(t,t_o)$ the corresponding domains and
  processes.  We need to prove that for any
  $u\in\hat{\mathcal{D}}_0\cap{\bar{\hat{\mathcal{D}}}}_0$ (and
  therefore $u(x)=0$ for $x\leq \bar\gamma(0)$):
  \begin{displaymath}
    \norma{
      \hat P^{g,\gamma}(t,0)u - 
      \hat P^{\bar g,\bar\gamma}(t,0)u 
    }_{\L1(\reali)}
    \leq
    C\!
    \left[
      \norma{\gamma-\bar\gamma}_{\C0([0,t])}
      \! + \!\!
      \int_0^t \! \norma{g(\tau)-\bar g(\tau)} \, d\tau
    \right].
  \end{displaymath}
  Note first that
  \begin{displaymath}
    \begin{split}
      &\norma{ \hat P^{g,\gamma}(t,0)u - \hat P^{\bar g,
          \bar\gamma}(t,0)u }_{\L1(\reali)}
      \\
      &\qquad\qquad\leq C \, \norma{\gamma-\bar\gamma}_{\C0([0,t])} +
      \norma{ \hat P^{g,\gamma}(t,0)u - \hat P^{\bar g,
          \bar\gamma}(t,0)u }_{\L1(I_t)}
    \end{split}
  \end{displaymath}
  where $I_t=\left[\bar\gamma(t),+\infty\right[$.  Hence, we consider
  below only the latter term in the right hand side above. Introduce
  the linear projector $\pi_t v = v \, \chi_{\strut{I_t}}$ and denote
  $w(\tau) = \hat P^{g,\gamma}(\tau,0) u$. Then,
  applying~\cite[Theorem~2.9]{BressanLectureNotes} to the process
  $\hat P^{\bar g, \bar\gamma}$ and to the Lipschitz curve $\tau \to
  \pi_\tau w(\tau)$, using the tangency condition, we compute
  \begin{equation*}
    \begin{split}
      & \norma{ \hat P^{g,\gamma}(t,0)u - \hat P^{\bar g,
          \bar\gamma}(t,0) u }_{\L1(I_t)}
      \\
      &\leq L \int_0^t \liminf_{\epsilon \to 0} \frac{\norma{
          \pi_{\tau+\epsilon}w(\tau+\epsilon) - \hat P^{\bar g,\bar
            \gamma}(\epsilon,\tau) \left(\pi_\tau w(\tau)\right)
        }_{\L1(I_{\tau+\epsilon})}}{\epsilon} d\tau
      \\
      &\leq L \int_0^t \liminf_{\epsilon \to 0} \frac{\norma{ \hat
          P^{g,\gamma}(\epsilon,\tau)w(\tau) - \hat P^{\bar g,\bar
            \gamma}(\epsilon,\tau) \left(\pi_\tau w(\tau)\right)
        }_{\L1(I_{\tau+\epsilon})}}{\epsilon} d\tau
      \\
      &\leq L \int_0^t \liminf_{\epsilon \to 0} \frac{\norma{
          P^{g,\gamma}(\epsilon,\tau)\left(w(\tau)\right) - P^{\bar
            g,\bar \gamma}(\epsilon,\tau) \left( \pi_\tau w(\tau)
          \right) }_{\L1(I_{\tau+\epsilon})}}{\epsilon} d\tau
      \\
      & + L \int_0^t \norma{ G\left(w(\tau)\right) - G\left(\pi_\tau
          w(\tau) \right) }_{\L1({\reali})} d\tau \,.
    \end{split}
  \end{equation*}
  For the term deriving from the source, we use the $\L1$ Lipschitz
  continuity of $G$ to estimate:
  \begin{eqnarray*}
    \int_0^t \norma{
      G\left(w(\tau)\right)
      - 
      G\left(\pi_\tau w(\tau) \right)
    }_{\L1({\reali})}
    d\tau
    & \leq &
    C \int_0^t \norma{
      w(\tau)-\left( \pi_\tau w(\tau) \right) 
    }_{\L1(\reali)}
    d\tau\\
    & \leq &
    C \, T \, \norma{\gamma - \bar \gamma}_{\C0([0,t])}.
  \end{eqnarray*}
  Concerning the other term, denote by $ F^{g_o,\gamma}(t,t_o)u $ the
  tangent vector defined in~(\ref{eq:local}). Here, we explicitly
  denote the dependence of the tangent vector on the curve $\gamma$
  and on the pointwise boundary data $g_o=g(t_o)$. By
  \ref{it:tangent}) in Theorem \ref{thm:SRS}, the curve $\eta \to
  P^{\bar g,\bar \gamma}(\eta,\tau) \left( \pi_\tau w(\tau) \right)$
  is first order tangent to $\eta \to F^{\bar g(\tau),\bar
    \gamma}(\eta,\tau) \left( \pi_\tau w(\tau) \right)$, while $\eta
  \to P^{ g,\gamma}(\eta,\tau) w(\tau)$ is first order tangent to
  $\eta \to F^{ g(\tau),\gamma}(\eta,\tau) \left(w(\tau)
  \right)$. Because of the finite propagation speed, the two tangent
  vectors coincide in the interval
  $[\bar\gamma(\tau)+\eta\hat\lambda,+\infty[$. Therefore,
  \begin{equation*}
    \begin{split}
      & \int_0^t \liminf_{\epsilon \to 0} \frac{\norma{ P^{
            g,\gamma}(\epsilon,\tau) w(\tau) - P^{\bar g,\bar
            \gamma}(\epsilon,\tau) \left( \pi_\tau w(\tau) \right)
        }_{\L1(I_{\tau+\epsilon})}}{\epsilon} d\tau
      \\
      &= \int_0^t \liminf_{\epsilon \to 0} \frac{\norma{
          F^{g(\tau),\gamma}(\epsilon,\tau) w(\tau) - F^{\bar
            g(\tau),\bar \gamma}(\epsilon,\tau) \left( \pi_\tau
            w(\tau) \right) }_{\L1(I_{\tau+\epsilon})}}{\epsilon}
      d\tau
      \\
      &= \int_0^t \liminf_{\epsilon \to 0} \frac{1}{\epsilon}
      \int_{\bar\gamma(\tau+\epsilon)}^{\bar\gamma(\tau)+\epsilon\hat\lambda}
      \bigg\| \left(F^{g(\tau),\gamma}(\epsilon,\tau)
        w(\tau)\right)(x)\\
      &\qquad\qquad - \left(F^{\bar g(\tau),\bar
          \gamma}(\epsilon,\tau) \left( \pi_\tau w(\tau)
        \right)\right)(x)\bigg\| \,dx d\tau \,.
    \end{split}
  \end{equation*}
  Referring to Lemma~\ref{lem:RP}, introduce the
  quantities
  \begin{eqnarray*}
    w^\tau_r 
    & = & 
    w\left(\tau,\bar\gamma(\tau) \right),
    \\
    b(w^{\bar\sigma,\tau}) 
    & = & 
    \bar g(\tau),
    \\
    w_r^\tau 
    & = & 
    \psi_n(\bar\sigma_n)\circ\ldots\circ\psi_{\ell+1}(\sigma_{\ell+1})
    (w^{\bar\sigma,\tau}).
    \\
    \tilde w^\tau(x)
    & = &
    \begin{cases}
      w(\tau,x) &\text{ for } x\geq \gamma(\tau),
      \\
      w \left(\tau,\gamma(\tau) \right) & \text{ for } x <
      \gamma(\tau),
    \end{cases}
    \\
    \tilde{\bar w}^\tau(x)
    & = &
    \begin{cases}
      w(\tau,x) & \text{ for } x\geq \bar\gamma(\tau),
      \\
      w^{\bar\sigma,\tau} & \text{ for } x < \bar\gamma(\tau)
    \end{cases}
  \end{eqnarray*}
  By formul\ae~(\ref{eq:tilde})--(\ref{eq:local}) and since the
  boundary condition is satisfied for almost all $\tau$, that is
  $b\left( w\left(\tau,\gamma(\tau) \right) \right) = g(\tau)$, one
  has for $x\geq \bar\gamma(\tau+\epsilon)$
  \begin{eqnarray*}
    \left(F^{g(\tau),\gamma}(\epsilon,\tau) 
      w(\tau)\right)(x)
    & = &
    \left({\mathcal{S}}_\epsilon\tilde w^\tau\right)(x)
    \\
    \left(F^{\bar g(\tau),\bar\gamma}(\epsilon,\tau) \left(\pi_\tau 
        w(\tau)\right)\right)(x)
    & = &
    \left({\mathcal{S}}_\epsilon {\tilde{ \bar{w}}}^\tau\right)(x)
  \end{eqnarray*}
  where $\mathcal{S}$ is the purely convective Standard Riemann
  Semigroup without boundary generated by
  $f$~\cite[Definition~9.1]{BressanLectureNotes}.
    
  Denote by $U^\sharp_{\tau}$ and $\bar U^\sharp_{\tau}$ the solutions
  to the two Riemann problems:
  \begin{displaymath}
    \begin{cases}
      u_t+f(u)_x=0
      \\
      u(0,x)=
      \begin{cases}
        \tilde w^\tau(\bar\gamma(\tau)-) & \text{ for }x<0
        \\
        \tilde w^\tau \left( \bar\gamma(\tau) \right) & \text{ for
        }x>0
      \end{cases}
    \end{cases}
    \begin{cases}
      u_t+f(u)_x=0
      \\
      u(0,x)=
      \begin{cases}
        \tilde{\bar w}^\tau(\bar\gamma(\tau)-) &\text{ for }x<0
        \\
        \tilde{\bar w}^\tau \left(\bar\gamma(\tau) \right) & \text{
          for }x>0
      \end{cases}
    \end{cases}
  \end{displaymath}
  Formula~\cite[(9.16)]{BressanLectureNotes} implies that
  \begin{equation*}
    \begin{split}
      &\int_0^t \liminf_{\epsilon \to 0} \frac{1}{\epsilon}
      \int_{\bar\gamma(\tau+\epsilon)}^{\bar\gamma(\tau)+
        \epsilon\hat\lambda} \bigg\|
      \left({\mathcal{S}}_\epsilon\tilde w^\tau\right)(x) -
      \left({\mathcal{S}}_\epsilon\tilde w^\tau\right)(x) \bigg\| \,dx
      d\tau\\
      &\qquad\leq \int_0^t \liminf_{\epsilon \to 0} \frac{1}{\epsilon}
      \int_{\bar\gamma(\tau+\epsilon)-\bar\gamma(\tau)}^{
        \epsilon\hat\lambda} \left\| U^\sharp_{\tau}(\epsilon,x)-\bar
        U^\sharp_{\tau}(\epsilon,x) \right\| \,dx d\tau
    \end{split}
  \end{equation*}
  By Remark \ref{finalremark}, for almost all $\tau$ such that
  $\gamma(\tau) < \bar\gamma(\tau)$ one has $ \tilde
  w^\tau(\bar\gamma(\tau)-) = \tilde w^\tau(\bar\gamma(\tau)-) =
  w_r^\tau$, therefore $U^\sharp_{\tau}(\epsilon,x)\equiv w_r$. While
  for almost all $\tau$ such that $\gamma(\tau) = \bar\gamma(\tau)$,
  the boundary condition implies $\tilde w^\tau(\bar\gamma(\tau)-) =
  \tilde w^\tau \left( \gamma(\tau) \right) = w_r^\tau$ therefore we
  have again $U^\sharp_{\tau}(\epsilon,x)\equiv w_r^\tau$. We compute,
  for almost all $\tau$
  \begin{equation*}
    \begin{split}
      &\left\| U^\sharp_{\tau}(\epsilon,x)-\bar
        U^\sharp_{\tau}(\epsilon,x) \right\|\le
      C\left\|E^{\sigma}_b\left(w_r^\tau,\bar
          g(\tau)\right)\right\|\\
      &\qquad =C\left\|E^{\sigma}_b\left(w_r^\tau,\bar
          g(\tau)\right)-E^{\sigma}_b\left(w_r^\tau,b(w_r^\tau)\right)\right\|\le
      C\left\|\bar g(\tau)-b\left(w_r^\tau\right)\right\|.
    \end{split}
  \end{equation*}
  Finally we compute, using Proposition \ref{prop:curvesSource},
  \begin{equation*}
    \begin{split}
      &\int_0^t \liminf_{\epsilon \to 0} \frac{1}{\epsilon}
      \int_{\bar\gamma(\tau+\epsilon)-\bar\gamma(\tau)}^{
        \epsilon\hat\lambda} \left\| U^\sharp_{\tau}(\epsilon,x)-\bar
        U^\sharp_{\tau}(\epsilon,x) \right\| \,dx d\tau
      \\
      &\leq C \int_0^t\left\|\bar
        g(\tau)-b\left(w_r^\tau\right)\right\|\, d\tau
      \\
      &\leq C \int_0^t\left\|\bar g(\tau)-g(\tau)\right\|\, d\tau + C
      \int_0^t \left\| b\left(w \left(\tau,\gamma(\tau) \right)
        \right) - b\left(w \left(\tau,\bar\gamma(\tau) \right) \right)
      \right\|\, d\tau
      \\
      & \leq C \int_0^t\left\|\bar g(\tau)-g(\tau)\right\|\, d\tau +
      \left\|\gamma-\bar\gamma\right\|_{\C0}.
    \end{split}
  \end{equation*}
  The general case of two non ordered curves follows immediately by
  the triangle inequality.
\end{proofof}

{\small{

    \bibliographystyle{abbrv}

    \bibliography{dege}

  }}

\end{document}